\chardef\@x10\chardef\@xv60
\def\tcitime{
\def\@time{%
  \@minute\time\@hour\@minute\divide\@hour\@xv
  \ifnum\@hour<\@x 0\fi\the\@hour:%
  \multiply\@hour\@xv\advance\@minute-\@hour
  \ifnum\@minute<\@x 0\fi\the\@minute
  }}%
\def\QCTOpt[#1]#2{%
  \def\QCTOptB{#1}
  \def\QCTOptA{#2}
}
\def\QCTNOpt#1{%
  \def\QCTOptA{#1}
  \let\QCTOptB\empty
}
\def\Qct{%
  \@ifnextchar[{%
    \QCTOpt}{\QCTNOpt}
}
\def\QCBOpt[#1]#2{%
  \def\QCBOptB{#1}
  \def\QCBOptA{#2}
}
\def\QCBNOpt#1{%
  \def\QCBOptA{#1}
  \let\QCBOptB\empty
}
\def\Qcb{%
  \@ifnextchar[{%
    \QCBOpt}{\QCBNOpt}
}
\def\PrepCapArgs{%
  \ifx\QCBOptA\empty
    \ifx\QCTOptA\empty
      {}%
    \else
      \ifx\QCTOptB\empty
        {\QCTOptA}%
      \else
        [\QCTOptB]{\QCTOptA}%
      \fi
    \fi
  \else
    \ifx\QCBOptA\empty
      {}%
    \else
      \ifx\QCBOptB\empty
        {\QCBOptA}%
      \else
        [\QCBOptB]{\QCBOptA}%
      \fi
    \fi
  \fi
}
\def\GRAPHICSPS#1{%
 \ifcase\GRAPHICSTYPE
   \special{ps: #1}%
 \or
   \special{language "PS", include "#1"}%
 \fi
}%
\def\graffile#1#2#3#4{%
    \leavevmode
    \raise -#4 \BOXTHEFRAME{%
        \hbox to #2{\raise #3\hbox to #2{\null #1\hfil}}}%
}%
\def\draftbox#1#2#3#4{%
 \leavevmode\raise -#4 \hbox{%
  \frame{\rlap{\protect\tiny #1}\hbox to #2%
   {\vrule height#3 width\z@ depth\z@\hfil}%
  }%
 }%
}%
\newif\ifwasdraft
\def\GRAPHIC#1#2#3#4#5{%
 \ifnum\draft=\@ne\draftbox{#2}{#3}{#4}{#5}%
  \else\graffile{#1}{#3}{#4}{#5}%
  \fi
 }%
\def\addtoLaTeXparams#1{%
    \edef\LaTeXparams{\LaTeXparams #1}}%
\newif\ifBoxFrame \BoxFramefalse
\newif\ifOverFrame \OverFramefalse
\newif\ifUnderFrame \UnderFramefalse
\def\BOXTHEFRAME#1{%
   \hbox{%
      \ifBoxFrame
         \frame{#1}%
      \else
         {#1}%
      \fi
   }%
}
\def\doFRAMEparams#1{\BoxFramefalse\OverFramefalse\UnderFramefalse\readFRAMEparams#1\end}%
\def\readFRAMEparams#1{%
   \ifx#1\end%
  \let\next=\relax
  \else
  \ifx#1i\dispkind=\z@\fi
  \ifx#1d\dispkind=\@ne\fi
  \ifx#1f\dispkind=\tw@\fi
	\ifx#1h
    \ifnum\dispkind=\tw@
			\@ifundefined{@HHfloat}{
			  \addtoLaTeXparams{h}
		 	 }{
         \def\LaTeXparams{H}
         \typeout{tcilatex: attribute align pos of FRAME  set to H}
         \typeout{\space \space \space \space all other placement options (tbp) are ignored }
   		 }
	  \else
			\addtoLaTeXparams{h}
    \fi
	\fi
  \if\LaTeXparams H
  	 \ifx#1t\fi	 
  	 \ifx#1b\fi	 
     \ifx#1p\fi
  \else
      \ifx#1t\addtoLaTeXparams{t}\fi
      \ifx#1b\addtoLaTeXparams{b}\fi
      \ifx#1p\addtoLaTeXparams{p}\fi
  \fi

  \ifx#1X\BoxFrametrue\fi
  \ifx#1O\OverFrametrue\fi
  \ifx#1U\UnderFrametrue\fi
  \ifx#1w
    \ifnum\draft=1\wasdrafttrue\else\wasdraftfalse\fi
    \draft=\@ne
  \fi
  \let\next=\readFRAMEparams
  \fi
 \next
 }%
\def\IFRAME#1#2#3#4#5#6{%
      \bgroup
      \let\QCTOptA\empty
      \let\QCTOptB\empty
      \let\QCBOptA\empty
      \let\QCBOptB\empty
      #6%
      \parindent=0pt%
      \leftskip=0pt
      \rightskip=0pt
      \setbox0 = \hbox{\QCBOptA}%
      \@tempdima = #1\relax
      \ifOverFrame
          \typeout{This is not implemented yet}%
          \show\HELP
      \else
         \ifdim\wd0>\@tempdima
            \advance\@tempdima by \@tempdima
            \ifdim\wd0 >\@tempdima
               \textwidth=\@tempdima
               \setbox1 =\vbox{%
                  \noindent\hbox to \@tempdima{\hfill\GRAPHIC{#5}{#4}{#1}{#2}{#3}\hfill}\\%
                  \noindent\hbox to \@tempdima{\parbox[b]{\@tempdima}{\QCBOptA}}%
               }%
               \wd1=\@tempdima
            \else
               \textwidth=\wd0
               \setbox1 =\vbox{%
                 \noindent\hbox to \wd0{\hfill\GRAPHIC{#5}{#4}{#1}{#2}{#3}\hfill}\\%
                 \noindent\hbox{\QCBOptA}%
               }%
               \wd1=\wd0
            \fi
         \else
            \ifdim\wd0>0pt
              \hsize=\@tempdima
              \setbox1 =\vbox{%
                \unskip\GRAPHIC{#5}{#4}{#1}{#2}{0pt}%
                \break
                \unskip\hbox to \@tempdima{\hfill \QCBOptA\hfill}%
              }%
              \wd1=\@tempdima
           \else
              \hsize=\@tempdima
              \setbox1 =\vbox{%
                \unskip\GRAPHIC{#5}{#4}{#1}{#2}{0pt}%
              }%
              \wd1=\@tempdima
           \fi
         \fi
         \@tempdimb=\ht1
         \advance\@tempdimb by \dp1
         \advance\@tempdimb by -#2%
         \advance\@tempdimb by #3%
         \leavevmode
         \raise -\@tempdimb \hbox{\box1}%
      \fi
      \egroup%
}%
\def\DFRAME#1#2#3#4#5{%
 \begin{center}
     \let\QCTOptA\empty
     \let\QCTOptB\empty
     \let\QCBOptA\empty
     \let\QCBOptB\empty
     \ifOverFrame 
        #5\QCTOptA\par
     \fi
     \GRAPHIC{#4}{#3}{#1}{#2}{\z@}
     \ifUnderFrame 
        \nobreak\par #5\QCBOptA
     \fi
 \end{center}%
 }%
\def\FFRAME#1#2#3#4#5#6#7{%
 \begin{figure}[#1]%
  \let\QCTOptA\empty
  \let\QCTOptB\empty
  \let\QCBOptA\empty
  \let\QCBOptB\empty
  \ifOverFrame
    #4
    \ifx\QCTOptA\empty
    \else
      \ifx\QCTOptB\empty
        \caption{\QCTOptA}%
      \else
        \caption[\QCTOptB]{\QCTOptA}%
      \fi
    \fi
    \ifUnderFrame\else
      \label{#5}%
    \fi
  \else
    \UnderFrametrue%
  \fi
  \begin{center}\GRAPHIC{#7}{#6}{#2}{#3}{\z@}\end{center}%
  \ifUnderFrame
    #4
    \ifx\QCBOptA\empty
      \caption{}%
    \else
      \ifx\QCBOptB\empty
        \caption{\QCBOptA}%
      \else
        \caption[\QCBOptB]{\QCBOptA}%
      \fi
    \fi
    \label{#5}%
  \fi
  \end{figure}%
 }%
\def\makeactives{
  \catcode`\"=\active
  \catcode`\;=\active
  \catcode`\:=\active
  \catcode`\'=\active
  \catcode`\~=\active
}
   \gdef\activesoff{%
      \def"{\string"}
      \def;{\string;}
      \def:{\string:}
      \def'{\string'}
      \def~{\string~}
    }
\def\FRAME#1#2#3#4#5#6#7#8{%
 \bgroup
 \@ifundefined{bbl@deactivate}{}{\activesoff}
 \ifnum\draft=\@ne
   \wasdrafttrue
 \else
   \wasdraftfalse%
 \fi
 \def\LaTeXparams{}%
 \dispkind=\z@
 \def\LaTeXparams{}%
 \doFRAMEparams{#1}%
 \ifnum\dispkind=\z@\IFRAME{#2}{#3}{#4}{#7}{#8}{#5}\else
  \ifnum\dispkind=\@ne\DFRAME{#2}{#3}{#7}{#8}{#5}\else
   \ifnum\dispkind=\tw@
    \edef\@tempa{\noexpand\FFRAME{\LaTeXparams}}%
    \@tempa{#2}{#3}{#5}{#6}{#7}{#8}%
    \fi
   \fi
  \fi
  \ifwasdraft\draft=1\else\draft=0\fi{}%
  \egroup
 }%
\def\TEXUX#1{"texux"}
\long\def\QQQ#1#2{%
     \long\expandafter\def\csname#1\endcsname{#2}}%
\long\def\QQA#1#2{}%
\def\QTR#1#2{{\csname#1\endcsname #2}}
\def\EXPAND#1[#2]#3{}%
\def\NOEXPAND#1[#2]#3{}%
\def\LaTeXparent#1{}%
\def\ChildStyles#1{}%
\def\ChildDefaults#1{}%
\def\QTagDef#1#2#3{}%
\def\QQfnmark#1{\footnotemark}
\def\makeatletter\input gnuindex.sty\makeatother\makeindex{\makeatletter\input gnuindex.sty\makeatother\makeindex}%
\def\initial#1{\bigbreak{\raggedright\large\bf #1}\kern 2\p@\penalty3000}}%
 \def\abstract{%
  \if@twocolumn
   \section*{Abstract (Not appropriate in this style!)}%
   \else \small 
   \begin{center}{\bf Abstract\vspace{-.5em}\vspace{\z@}}\end{center}%
   \quotation 
   \fi
  }%
   \def\registered{\relax\ifmmode{}\r@gistered
                    \else$\m@th\r@gistered$\fi}%
 \def\r@gistered{^{\ooalign
  {\hfil\raise.07ex\hbox{$\scriptstyle\rm\text{R}$}\hfil\crcr
  \mathhexbox20D}}}}{}%
\newdimen\theight
\def\Column{%
 \vadjust{\setbox\z@=\hbox{\scriptsize\quad\quad tcol}%
  \theight=\ht\z@\advance\theight by \dp\z@\advance\theight by \lineskip
  \kern -\theight \vbox to \theight{%
   \rightline{\rlap{\box\z@}}%
   \vss
   }%
  }%
 }%
\def\qed{%
 \ifhmode\unskip\nobreak\fi\ifmmode\ifinner\else\hskip5\p@\fi\fi
 \hbox{\hskip5\p@\vrule width4\p@ height6\p@ depth1.5\p@\hskip\p@}%
 }%
\def\miss{\hbox{\vrule height2\p@ width 2\p@ depth\z@}}%
\def\tcol#1{{\baselineskip=6\p@ \vcenter{#1}} \Column}  %
\def\newfmtname{LaTeX2e}
\def\chkcompat{%
   \if@compatibility
   \else
     \usepackage{latexsym}
   \fi
}
  \DeclareOldFontCommand{\rm}{\normalfont\rmfamily}{\mathrm}
  \DeclareOldFontCommand{\sf}{\normalfont\sffamily}{\mathsf}
  \DeclareOldFontCommand{\tt}{\normalfont\ttfamily}{\mathtt}
  \DeclareOldFontCommand{\bf}{\normalfont\bfseries}{\mathbf}
  \DeclareOldFontCommand{\it}{\normalfont\itshape}{\mathit}
  \DeclareOldFontCommand{\sl}{\normalfont\slshape}{\@nomath\sl}
  \DeclareOldFontCommand{\sc}{\normalfont\scshape}{\@nomath\sc}
\def\alpha{{\Greekmath 010B}}%
\def\beta{{\Greekmath 010C}}%
\def\gamma{{\Greekmath 010D}}%
\def\delta{{\Greekmath 010E}}%
\def\epsilon{{\Greekmath 010F}}%
\def\zeta{{\Greekmath 0110}}%
\def\eta{{\Greekmath 0111}}%
\def\theta{{\Greekmath 0112}}%
\def\iota{{\Greekmath 0113}}%
\def\kappa{{\Greekmath 0114}}%
\def\lambda{{\Greekmath 0115}}%
\def\mu{{\Greekmath 0116}}%
\def\nu{{\Greekmath 0117}}%
\def\xi{{\Greekmath 0118}}%
\def\pi{{\Greekmath 0119}}%
\def\rho{{\Greekmath 011A}}%
\def\sigma{{\Greekmath 011B}}%
\def\tau{{\Greekmath 011C}}%
\def\upsilon{{\Greekmath 011D}}%
\def\phi{{\Greekmath 011E}}%
\def\chi{{\Greekmath 011F}}%
\def\psi{{\Greekmath 0120}}%
\def\omega{{\Greekmath 0121}}%
\def\varepsilon{{\Greekmath 0122}}%
\def\vartheta{{\Greekmath 0123}}%
\def\varpi{{\Greekmath 0124}}%
\def\varrho{{\Greekmath 0125}}%
\def\varsigma{{\Greekmath 0126}}%
\def\varphi{{\Greekmath 0127}}%
\def\nabla{{\Greekmath 0272}}
\def\FindBoldGroup{%
   {\setbox0=\hbox{$\mathbf{x\global\edef\theboldgroup{\the\mathgroup}}$}}%
}
\def\Greekmath#1#2#3#4{%
    \if@compatibility
        \ifnum\mathgroup=\symbold
           \mathchoice{\mbox{\boldmath$\displaystyle\mathchar"#1#2#3#4$}}%
                      {\mbox{\boldmath$\textstyle\mathchar"#1#2#3#4$}}%
                      {\mbox{\boldmath$\scriptstyle\mathchar"#1#2#3#4$}}%
                      {\mbox{\boldmath$\scriptscriptstyle\mathchar"#1#2#3#4$}}%
        \else
           \mathchar"#1#2#3#4%
        \fi 
    \else 
        \FindBoldGroup
        \ifnum\mathgroup=\theboldgroup 
           \mathchoice{\mbox{\boldmath$\displaystyle\mathchar"#1#2#3#4$}}%
                      {\mbox{\boldmath$\textstyle\mathchar"#1#2#3#4$}}%
                      {\mbox{\boldmath$\scriptstyle\mathchar"#1#2#3#4$}}%
                      {\mbox{\boldmath$\scriptscriptstyle\mathchar"#1#2#3#4$}}%
        \else
           \mathchar"#1#2#3#4%
        \fi     	    
	  \fi}
\newif\ifGreekBold  \GreekBoldfalse
\let\SAVEPBF=\pbf
\def\pbf{\GreekBoldtrue\SAVEPBF}%
  \newcounter{equationnumber}  
  \def\mathletters{%
     \addtocounter{equation}{1}
     \edef\@currentlabel{\theequation}%
     \setcounter{equationnumber}{\c@equation}
     \setcounter{equation}{0}%
     \edef\theequation{\@currentlabel\noexpand\alph{equation}}%
  }
    \def\BibTeX{{\rm B\kern-.05em{\sc i\kern-.025em b}\kern-.08em
                 T\kern-.1667em\lower.7ex\hbox{E}\kern-.125emX}}}{}%
\def\AmS{{\protect\usefont{OMS}{cmsy}{m}{n}%
                A\kern-.1667em\lower.5ex\hbox{M}\kern-.125emS}}}{}%
\def\DN@{\def\next@}%
\def\eat@#1{}%
\let\DOTSI\relax
\def\RIfM@{\relax\ifmmode}%
\def\FN@{\futurelet\next}%
\def\iint{\DOTSI\intno@\tw@\FN@\ints@}%
\def\iiint{\DOTSI\intno@\thr@@\FN@\ints@}%
\def\iiiint{\DOTSI\intno@4 \FN@\ints@}%
\def\idotsint{\DOTSI\intno@\z@\FN@\ints@}%
\def\ints@{\findlimits@\ints@@}%
\newif\iflimtoken@
\newif\iflimits@
\def\findlimits@{\limtoken@true\ifx\next\limits\limits@true
 \else\ifx\next\nolimits\limits@false\else
 \limtoken@false\ifx\ilimits@\nolimits\limits@false\else
 \ifinner\limits@false\else\limits@true\fi\fi\fi\fi}%
\def\multint@{\int\ifnum\intno@=\z@\intdots@                          
 \else\intkern@\fi                                                    
 \ifnum\intno@>\tw@\int\intkern@\fi                                   
 \ifnum\intno@>\thr@@\int\intkern@\fi                                 
 \int}
\def\multintlimits@{\intop\ifnum\intno@=\z@\intdots@\else\intkern@\fi
 \ifnum\intno@>\tw@\intop\intkern@\fi
 \ifnum\intno@>\thr@@\intop\intkern@\fi\intop}%
\def\intic@{%
    \mathchoice{\hskip.5em}{\hskip.4em}{\hskip.4em}{\hskip.4em}}%
\def\negintic@{\mathchoice
 {\hskip-.5em}{\hskip-.4em}{\hskip-.4em}{\hskip-.4em}}%
\def\ints@@{\iflimtoken@                                              
 \def\ints@@@{\iflimits@\negintic@
   \mathop{\intic@\multintlimits@}\limits                             
  \else\multint@\nolimits\fi                                          
  \eat@}
 \else                                                                
 \def\ints@@@{\iflimits@\negintic@
  \mathop{\intic@\multintlimits@}\limits\else
  \multint@\nolimits\fi}\fi\ints@@@}%
\def\intkern@{\mathchoice{\!\!\!}{\!\!}{\!\!}{\!\!}}%
\def\plaincdots@{\mathinner{\cdotp\cdotp\cdotp}}%
\def\intdots@{\mathchoice{\plaincdots@}%
 {{\cdotp}\mkern1.5mu{\cdotp}\mkern1.5mu{\cdotp}}%
 {{\cdotp}\mkern1mu{\cdotp}\mkern1mu{\cdotp}}%
 {{\cdotp}\mkern1mu{\cdotp}\mkern1mu{\cdotp}}}%
\def\RIfM@{\relax\protect\ifmmode}
\def\text{\RIfM@\expandafter\text@\else\expandafter\mbox\fi}
\let\nfss@text\text
\def\text@#1{\mathchoice
   {\textdef@\displaystyle\f@size{#1}}%
   {\textdef@\textstyle\tf@size{\firstchoice@false #1}}%
   {\textdef@\textstyle\sf@size{\firstchoice@false #1}}%
   {\textdef@\textstyle \ssf@size{\firstchoice@false #1}}%
   \glb@settings}
\def\textdef@#1#2#3{\hbox{{%
                    \everymath{#1}%
                    \let\f@size#2\selectfont
                    #3}}}
\newif\iffirstchoice@
\def\Let@{\relax\iffalse{\fi\let\\=\cr\iffalse}\fi}%
\def\vspace@{\def\vspace##1{\crcr\noalign{\vskip##1\relax}}}%
\def\multilimits@{\bgroup\vspace@\Let@
 \baselineskip\fontdimen10 \scriptfont\tw@
 \advance\baselineskip\fontdimen12 \scriptfont\tw@
 \lineskip\thr@@\fontdimen8 \scriptfont\thr@@
 \lineskiplimit\lineskip
 \vbox\bgroup\ialign\bgroup\hfil$\m@th\scriptstyle{##}$\hfil\crcr}%
\def\Sb{_\multilimits@}%
\def\endSb{\crcr\egroup\egroup\egroup}%
\def\Sp{^\multilimits@}%
\newdimen\ex@
\def\rightarrowfill@#1{$#1\m@th\mathord-\mkern-6mu\cleaders
 \hbox{$#1\mkern-2mu\mathord-\mkern-2mu$}\hfill
 \mkern-6mu\mathord\rightarrow$}%
\def\leftarrowfill@#1{$#1\m@th\mathord\leftarrow\mkern-6mu\cleaders
 \hbox{$#1\mkern-2mu\mathord-\mkern-2mu$}\hfill\mkern-6mu\mathord-$}%
\def\leftrightarrowfill@#1{$#1\m@th\mathord\leftarrow
\mkern-6mu\cleaders
 \hbox{$#1\mkern-2mu\mathord-\mkern-2mu$}\hfill
 \mkern-6mu\mathord\rightarrow$}%
\def\overrightarrow{\mathpalette\overrightarrow@}%
\def\overrightarrow@#1#2{\vbox{\ialign{##\crcr\rightarrowfill@#1\crcr
 \noalign{\kern-\ex@\nointerlineskip}$\m@th\hfil#1#2\hfil$\crcr}}}%
\def\overleftarrow{\mathpalette\overleftarrow@}%
\def\overleftarrow@#1#2{\vbox{\ialign{##\crcr\leftarrowfill@#1\crcr
 \noalign{\kern-\ex@\nointerlineskip}$\m@th\hfil#1#2\hfil$\crcr}}}%
\def\overleftrightarrow{\mathpalette\overleftrightarrow@}%
\def\overleftrightarrow@#1#2{\vbox{\ialign{##\crcr
   \leftrightarrowfill@#1\crcr
 \noalign{\kern-\ex@\nointerlineskip}$\m@th\hfil#1#2\hfil$\crcr}}}%
\def\underrightarrow{\mathpalette\underrightarrow@}%
\def\underrightarrow@#1#2{\vtop{\ialign{##\crcr$\m@th\hfil#1#2\hfil
  $\crcr\noalign{\nointerlineskip}\rightarrowfill@#1\crcr}}}%
\def\underleftarrow{\mathpalette\underleftarrow@}%
\def\underleftarrow@#1#2{\vtop{\ialign{##\crcr$\m@th\hfil#1#2\hfil
  $\crcr\noalign{\nointerlineskip}\leftarrowfill@#1\crcr}}}%
\def\underleftrightarrow{\mathpalette\underleftrightarrow@}%
\def\underleftrightarrow@#1#2{\vtop{\ialign{##\crcr$\m@th
  \hfil#1#2\hfil$\crcr
 \noalign{\nointerlineskip}\leftrightarrowfill@#1\crcr}}}%
\def\qopnamewl@#1{\mathop{\operator@font#1}\nlimits@}
\let\nlimits@\displaylimits
\def\setboxz@h{\setbox\z@\hbox}
\def\varlim@#1#2{\mathop{\vtop{\ialign{##\crcr
 \hfil$#1\m@th\operator@font lim$\hfil\crcr
 \noalign{\nointerlineskip}#2#1\crcr
 \noalign{\nointerlineskip\kern-\ex@}\crcr}}}}
 \def\rightarrowfill@#1{\m@th\setboxz@h{$#1-$}\ht\z@\z@
  $#1\copy\z@\mkern-6mu\cleaders
  \hbox{$#1\mkern-2mu\box\z@\mkern-2mu$}\hfill
  \mkern-6mu\mathord\rightarrow$}
\def\leftarrowfill@#1{\m@th\setboxz@h{$#1-$}\ht\z@\z@
  $#1\mathord\leftarrow\mkern-6mu\cleaders
  \hbox{$#1\mkern-2mu\copy\z@\mkern-2mu$}\hfill
  \mkern-6mu\box\z@$}
\def\projlim{\qopnamewl@{proj\,lim}}
\def\injlim{\qopnamewl@{inj\,lim}}
\def\varinjlim{\mathpalette\varlim@\rightarrowfill@}
\def\varprojlim{\mathpalette\varlim@\leftarrowfill@}
\def\varliminf{\mathpalette\varliminf@{}}
\def\varliminf@#1{\mathop{\underline{\vrule\@depth.2\ex@\@width\z@
   \hbox{$#1\m@th\operator@font lim$}}}}
\def\varlimsup{\mathpalette\varlimsup@{}}
\def\varlimsup@#1{\mathop{\overline
  {\hbox{$#1\m@th\operator@font lim$}}}}
\def\align{\@verbatim \frenchspacing\@vobeyspaces \@alignverbatim
You are using the "align" environment in a style in which it is not defined.}
\let\csname endalign*\endcsname =\endtrivlist
\def\alignat{\@verbatim \frenchspacing\@vobeyspaces \@alignatverbatim
You are using the "alignat" environment in a style in which it is not defined.}
\let\csname endalignat*\endcsname =\endtrivlist
\def\xalignat{\@verbatim \frenchspacing\@vobeyspaces \@xalignatverbatim
You are using the "xalignat" environment in a style in which it is not defined.}
\let\csname endxalignat*\endcsname =\endtrivlist
\def\gather{\@verbatim \frenchspacing\@vobeyspaces \@gatherverbatim
You are using the "gather" environment in a style in which it is not defined.}
\let\csname endgather*\endcsname =\endtrivlist
\def\multiline{\@verbatim \frenchspacing\@vobeyspaces \@multilineverbatim
You are using the "multiline" environment in a style in which it is not defined.}
\let\csname endmultiline*\endcsname =\endtrivlist
\def\arrax{\@verbatim \frenchspacing\@vobeyspaces \@arraxverbatim
You are using a type of "array" construct that is only allowed in AmS-LaTeX.}
\def\tabulax{\@verbatim \frenchspacing\@vobeyspaces \@tabulaxverbatim
You are using a type of "tabular" construct that is only allowed in AmS-LaTeX.}
\let\csname endarrax*\endcsname =\endtrivlist
\let\csname endtabulax*\endcsname =\endtrivlist
\def\@@eqncr{\let\@tempa\relax
    \ifcase\@eqcnt \def\@tempa{& & &}\or \def\@tempa{& &}%
      \else \def\@tempa{&}\fi
     \@tempa
     \if@eqnsw
        \iftag@
           \@taggnum
        \else
           \@eqnnum\stepcounter{equation}%
        \fi
     \fi
     \global\tag@false
     \global\@eqnswtrue
     \global\@eqcnt\z@\cr}
 \def\endequation{%
     \ifmmode\ifinner 
      \iftag@
        \addtocounter{equation}{-1} 
        $\hfil
           \displaywidth\linewidth\@taggnum\egroup \endtrivlist
        \global\tag@false
        \global\@ignoretrue   
      \else
        $\hfil
           \displaywidth\linewidth\@eqnnum\egroup \endtrivlist
        \global\tag@false
        \global\@ignoretrue 
      \fi
     \else   
      \iftag@
        \addtocounter{equation}{-1} 
        \eqno \hbox{\@taggnum}
        \global\tag@false%
        $$\global\@ignoretrue
      \else
        \eqno \hbox{\@eqnnum}
        $$\global\@ignoretrue
      \fi
     \fi\fi
 } 
 \newif\iftag@ \tag@false
 \def\tag{\@ifnextchar*{\@tagstar}{\@tag}}
 \def\@tag#1{%
     \global\tag@true
     \global\def\@taggnum{(#1)}}
 \def\@tagstar*#1{%
     \global\tag@true
     \global\def\@taggnum{#1}%
}
\theoremstyle{definition}
\theoremstyle{remark}
\numberwithin{equation}{section}
\begin{document}
\title{Composition of Dyadic Paraproducts}
\author[S. Pott]{Sandra Pott}
\thanks{}
\address{S. Pott, Centre for Mathematical Sciences, University of Lund,
Lund, Sweden}
\email{sandra@maths.lth.se}
\author[M.C. Reguera]{Maria Carmen Reguera$^1$}
\thanks{1.\,\,\,Research supported by grants 2009SGR-000420 (Generalitat de
Catalunya) and MTM-2010-16232 (Spain)}
\address{M. C. Reguera, Department of Mathematics, Universitat Aut\`onoma de
Barcelona, Barcelona, Spain}
\email{mreguera@mat.uab.cat}
\author[E. T. Sawyer]{Eric T. Sawyer$^2$}
\thanks{2.  Research supported in part by a NSERC Grant.}
\address{E. T. Sawyer, Department of Mathematics, McMaster University,
Hamilton, Canada}
\email{sawyer@mcmaster.ca}
\author[B. D. Wick]{Brett D. Wick$^3$}
\address{Brett D. Wick, School of Mathematics\\
Georgia Institute of Technology\\
686 Cherry Street\\
Atlanta, GA USA 30332-0160}
\email{wick@math.gatech.edu}
\urladdr{www.math.gatech.edu/~wick}
\thanks{3.  Research supported in part by National Science Foundation
DMS grants \# 1001098 and \# 955432.}
\thanks{The authors would like to thank the Banff International Research
Station for the Banff--PIMS Research in Teams support for the project: The
Sarason Conjecture and the Composition of Paraproducts.}

\begin{abstract}
We obtain necessary and sufficient conditions to characterize the
boundedness of the composition of dyadic paraproduct operators.
\end{abstract}

\maketitle
\tableofcontents

\section{Introduction}

Recall that a Toeplitz operator on the Hardy space of analytic functions $%
H^{2}(\mathbb{D})$ is defined by 
\begin{equation*}
T_{\varphi }:H^{2}(\mathbb{D})\rightarrow H^{2}(\mathbb{D})\,\text{\ where }%
\,T_{\varphi }f=\mathbb{P}_{H^{2}}\left( \varphi f\right) .
\end{equation*}%
It is well known that this operator is bounded if and only if $\varphi \in
L^{\infty }(\mathbb{T})$. Equivalently, the Toeplitz operator $T_{\varphi }$
is bounded if and only if $\sup_{\lambda \in \mathbb{D}}\left\Vert
T_{\varphi }k_{\lambda }\right\Vert _{H^{2}}<\infty $ where $k_{\lambda }(z)=%
\frac{1}{1-\overline{\lambda }z}$ is the reproducing kernel for $H^{2}(%
\mathbb{D})$. An infamous conjecture of Sarason, \cite{sarasonConj}, states
that the composition of two (potentially unbounded) Toeplitz operators is
bounded, i.e. $T_{\varphi }T_{\overline{\psi }}$ is a bounded operator, if
and only if a certain relatively simple testing condition on the symbols $%
\varphi $ and $\psi $ hold, see \cite{TVZ}. However, even though this
conjecture seems quite reasonable, a beautiful counterexample was
constructed by F. Nazarov in \cite{Naz} disproving this simple testing
condition.

In this paper we are interested in a discrete dyadic analogue of the Sarason
conjecture. This discrete problem is already very challenging and captures
much of the difficulty associated with Sarason's original conjecture but is
more amenable to study because of the dyadic nature of the problem. In
particular, we are concerned with dyadic Haar paraproducts, and obtaining
necessary and sufficient conditions for the boundedness of the composition
of two such paraproducts. The conditions characterizing the boundedness will
be much more general than just those characterizing boundedness for each
individual paraproduct - just as the condition $\left\Vert bd\right\Vert
_{\infty }<\infty $ that characterizes boundedness of the composition $%
M_{b}\circ M_{d}$ of pointwise multipliers is much more general than the
conditions $\left\Vert b\right\Vert _{\infty }<\infty $ and $\left\Vert
d\right\Vert _{\infty }<\infty $ that characterize individual boundedness of
the pointwise multipliers.

Let $\mathcal{D}$ denote the usual dyadic grid of intervals on the real
line. We consider sequences $b=\left\{ b_{I}\right\} _{I\in \mathcal{D}}$ of
complex numbers on $\mathcal{D}$, which we often refer to as \emph{symbols}.
Define the Haar function $h_{I}^{0}$ and averaging function $h_{I}^{1}$ by 
\begin{equation*}
h_{I}^{0}\equiv h_I\equiv\frac{1}{\sqrt{\left\vert I\right\vert }}\left( -%
\mathbf{1}_{I_{-}}+\mathbf{1}_{I_{+}}\right) \text{ and }h_{I}^{1}\equiv%
\frac{1}{\left\vert I\right\vert }\mathbf{1}_{I}\ ,\ \ \ \ \ I\in \mathcal{D}%
.
\end{equation*}%
The operators considered in this paper are the following dyadic paraproducts.

\begin{definition}
\label{Paraproduct_Def} Given a symbol $b=\left\{ b_{I}\right\} _{I\in 
\mathcal{D}}$ and a pair $\left( \alpha ,\beta \right) \in \left\{
0,1\right\} \times \left\{ 0,1\right\} $, define the \emph{dyadic paraproduct%
} acting on a function $f$ by 
\begin{equation*}
\mathsf{P}_{b}^{\left( \alpha ,\beta \right) }f\equiv \sum_{I\in \mathcal{D}%
}b_{I}\left\langle f,h_{I}^{\beta }\right\rangle_{L^2(\mathbb{R})}
h_{I}^{\alpha },
\end{equation*}%
where $h_{I}^{0}$ is the Haar function associated with $I$, and $h_{I}^{1}$
is the average function associated with $I$. The index $\left( \alpha ,\beta
\right) $ is referred to as the \emph{type} of $\mathsf{P}_{b}^{\left(
\alpha ,\beta \right) }$.
\end{definition}

The purpose of this paper is to characterize boundedness on $L^{2}\left( 
\mathbb{R}\right) $ of the compositions $\mathsf{P}_{b}^{\left( \alpha
,\beta \right) }\circ \mathsf{P}_{d}^{\left( \gamma ,\delta \right) }$. We
denote the composition $\mathsf{P}_{b}^{\left( \alpha ,\beta \right) }\circ 
\mathsf{P}_{d}^{\left( \gamma ,\delta \right) }$ by $\mathsf{P}%
_{b,d}^{\left( \alpha ,\beta ,\gamma ,\delta \right) }$, and refer to the
index $\left( \alpha ,\beta ,\gamma ,\delta \right) $ as the \emph{type} of
the product $\mathsf{P}_{b}^{\left( \alpha ,\beta \right) }\circ \mathsf{P}%
_{d}^{\left( \gamma ,\delta \right) }$. The dual $\left( \mathsf{P}%
_{b}^{\left( \alpha ,\beta \right) }\right) ^{\ast }=\mathsf{P}_{b}^{\left(
\beta ,\alpha \right) }$ of the operator $\mathsf{P}_{b}^{\left( \beta
,\alpha \right) }$ is obtained by exchanging exponents, which then reduces
the total number of products to be investigated. We are able to give
reasonable characterizations of the operator norm $\left\Vert \mathsf{P}%
_{b,d}^{\left( \alpha ,\beta ,\gamma ,\delta \right) }\right\Vert
_{L^{2}\left( \mathbb{R}\right) \rightarrow L^{2}\left( \mathbb{R}\right) }$
in two special cases, namely when the type of $\mathsf{P}_{b,d}^{\left(
\alpha ,\beta ,\gamma ,\delta \right) }$ is of the form $\left( \alpha
,0,0,\delta \right) $ or $\left( 0,\beta ,\gamma ,0\right) $.

In the first case, the product $\mathsf{P}_{b,d}^{\left( \alpha ,0,0,\delta
\right) }$ reduces to a single paraproduct $\mathsf{P}_{b\circ d}^{\left(
\alpha ,\delta \right) }$ whose symbol $b\circ d$ is built from the
sequences in a very simple manner.

In the second case, the compositions are not as easy since there is less
cancellation. However, we are able to transplant the problem, first to an
operator on the discrete Bergman space on $\mathcal{D}$, and then to a two
weight norm inequality for a positive or singular operator on $L^{2}\left( 
\mathcal{H}\right) $. The positive operator inequality reduces to the tree
inequality in \cite{ArRoSa}, while the singular operator inequality is solved by
an extension of a two weight theorem in \cite{NTV}. The transplantation idea
seems to be a novel element in our approach to paraproducts, and should find
application elsewhere.

Our main results are then the following theorems that characterize the
compositions in certain cases. To state them requires some additional
notation. For a sequence $a=\{a_I\}_{I\in\mathcal{D}}$ define. 
\begin{eqnarray*}
\left\Vert a\right\Vert _{\ell ^{\infty }} &\equiv &\sup_{I\in \mathcal{D}%
}\left\vert a_{I}\right\vert ; \\
\left\Vert a\right\Vert _{CM} &\equiv &\sqrt{\sup_{I\in \mathcal{D}}\frac{1}{%
\left\vert I\right\vert }\sum_{J\subset I}\left\vert a_{J}\right\vert^{2}}\ .
\end{eqnarray*}
Given two sequences $b=\{b_I\}_{I\in \mathcal{D}}$ and $d=\{d_I\}_{I\in 
\mathcal{D}}$ let $b\circ d$ denote the \emph{Schur product} of the
sequences, i.e. 
\begin{equation*}
b\circ d\equiv \left\{ b_{I}d_{I}\right\} _{I\in \mathcal{D}}\ .
\end{equation*}

In the case of a composition of type $(0,0,0,1)$, $(1,0,0,0)$ or $(0,0,0,0)$
we have the following characterization.

\begin{theorem}
\label{Theorem_Simple} The composition $\mathsf{P}_{b}^{\left( 0,0\right)
}\circ \mathsf{P}_{d}^{\left( 0,1\right) }$ and $\mathsf{P}%
_{b}^{\left(1,0\right) }\circ \mathsf{P}_{d}^{\left( 0,0\right) }$ is
bounded on $L^{2}\left( \mathbb{R}\right) $ if and only if $\left\Vert
b\circ d\right\Vert_{CM}<\infty$. Moreover, the operator norm of the
composition satisfies 
\begin{equation*}
\left\Vert\mathsf{P}_{b}^{\left( 0,0\right) }\circ \mathsf{P}_{d}^{\left(
0,1\right) }\right\Vert _{L^{2}(\mathbb{R})\rightarrow L^{2}(\mathbb{R})} =
\left\Vert\mathsf{P}_{b}^{\left( 1,0\right) }\circ \mathsf{P}_{d}^{\left(
0,0\right) }\right\Vert _{L^{2}(\mathbb{R})\rightarrow L^{2}(\mathbb{R}%
)}\approx \left\Vert b\circ d\right\Vert_{CM}.
\end{equation*}

\noindent The composition $\mathsf{P}_{b}^{\left( 0,0\right) }\circ \mathsf{P%
}_{d}^{\left( 0,0\right) }$ is bounded on $L^{2}\left( \mathbb{R}\right) $
if and only if $\left\Vert b\circ d\right\Vert_{\ell^\infty}<\infty$
Moreover, the operator norm of the composition satisfies 
\begin{equation*}
\left\Vert\mathsf{P}_{b}^{\left( 0,0\right) }\circ \mathsf{P}_{d}^{\left(
0,0\right) }\right\Vert _{L^{2}(\mathbb{R})\rightarrow L^{2}(\mathbb{R}%
)}\approx \left\Vert b\circ d\right\Vert_{\ell^\infty}.
\end{equation*}
\end{theorem}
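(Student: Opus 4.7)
The plan is to reduce each of the three composition problems to the well-understood boundedness of a single paraproduct or Haar multiplier, by exploiting the orthonormality of the Haar basis and the fact that each of the inner operators $\mathsf{P}_d^{(0,1)}$, $\mathsf{P}_d^{(0,0)}$ produces an output expanded in Haar functions. The key observation is that whenever the middle pair of indices of $\mathsf{P}_{b,d}^{(\alpha,\beta,\gamma,\delta)}$ equals $(0,0)$, the Haar functions produced by the inner paraproduct get tested against Haar functions by the outer paraproduct, so the only surviving term in the double sum is the diagonal $I=J$, and the composition collapses to a single operator with symbol $b\circ d$.

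First, I would carry out the three direct computations. For type $(0,0,0,1)$, writing $\mathsf{P}_d^{(0,1)}f=\sum_{J}d_{J}\langle f\rangle_{J}h_{J}$ with $\langle f\rangle_{J}=\frac{1}{|J|}\int_{J}f$, and then applying $\mathsf{P}_b^{(0,0)}$, the inner product $\langle h_J,h_I\rangle=\delta_{IJ}$ forces
\[
\mathsf{P}_{b}^{(0,0)}\mathsf{P}_{d}^{(0,1)}f=\sum_{I\in\mathcal{D}}b_{I}d_{I}\langle f\rangle_{I}\,h_{I}=\mathsf{P}_{b\circ d}^{(0,1)}f.
\]
For type $(1,0,0,0)$, the same Haar orthonormality yields $\mathsf{P}_{b}^{(1,0)}\mathsf{P}_{d}^{(0,0)}f=\sum_{I}b_{I}d_{I}\langle f,h_{I}\rangle h_{I}^{1}=\mathsf{P}_{b\circ d}^{(1,0)}f$. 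For type $(0,0,0,0)$, both inner products are against Haar functions and the composition is $\mathsf{P}_{b\circ d}^{(0,0)}f=\sum_{I}b_{I}d_{I}\langle f,h_{I}\rangle h_{I}$, i.e.\ the diagonal Haar multiplier with symbol $b\circ d$.

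Once these algebraic identities are in hand, the characterizations are immediate from classical facts. The paraproduct $\mathsf{P}_{c}^{(0,1)}$ is bounded on $L^{2}(\mathbb{R})$ if and only if the sequence $c=\{c_{I}\}$ is a dyadic Carleson sequence, $\|c\|_{CM}<\infty$; this is the standard dyadic $H^{1}$--$BMO$ duality / Carleson embedding result. Since $\mathsf{P}_{c}^{(1,0)}=(\mathsf{P}_{\overline{c}}^{(0,1)})^{\ast}$, the adjoint operator has the same norm, giving the second equivalence with the same $\|b\circ d\|_{CM}$ constant. Finally, $\mathsf{P}_{c}^{(0,0)}$ acts diagonally in the Haar basis with eigenvalues $c_{I}$, so by Parseval its $L^{2}$ norm equals $\|c\|_{\ell^{\infty}}$, giving the third equivalence.

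Honestly there is no major obstacle here; the entire theorem is a bookkeeping consequence of Haar orthogonality and known one-paraproduct theory. The only small care needed is keeping the conventions straight about which slot of $\mathsf{P}^{(\alpha,\beta)}$ is the input and which is the output, so that the diagonal collapse indeed occurs, and recording the implicit constants in the $\approx$ appearing in the norm estimate for $\mathsf{P}^{(0,1)}$, which are those of the standard dyadic paraproduct/Carleson embedding theorem. The real substance of the paper lies in the remaining cases of type $(0,\beta,\gamma,0)$, where the cancellation does not collapse the double sum and the two-weight/tree-inequality machinery becomes necessary.
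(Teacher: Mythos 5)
Your proposal is correct and follows essentially the same route as the paper: the collapse identity $\mathsf{P}_{b}^{(\alpha,0)}\circ\mathsf{P}_{d}^{(0,\beta)}=\mathsf{P}_{b\circ d}^{(\alpha,\beta)}$ is exactly the computation in \eqref{composition}, and the single-paraproduct characterizations you invoke are the content of Lemma \ref{ClassicalCharacterization} (whose lower bound the paper obtains by testing $\mathsf{P}_{a}^{(0,1)}$ on the Haar function of the parent interval, the same classical fact you cite). No gaps.
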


For compositions of type $(1,0,0,1)$ we also have a characterization, but
again require some additional notation. Given a symbol $a=\left\{
a_{I}\right\} _{I\in \mathcal{D}}$, we define the \textit{sweep}, $\widehat{S%
}\left( a\right) $, of $a$ by%
\begin{equation}  \label{Sweep}
\widehat{S}\left( a\right) \equiv \left\{ \left\langle \sum_{J\in \mathcal{D}%
}a_{J}h_{J}^{1},h_{I}\right\rangle_{L^2(\mathbb{R})} \right\} _{I\in 
\mathcal{D}}=\left\{ \sum_{J\subsetneqq I}a_{J}\widehat{h_{J}^{1}}\left(
I\right) \right\} _{I\in \mathcal{D}},
\end{equation}%
and also the sequence $E(a)$ by%
\begin{equation}  \label{Ea}
E(a)\equiv\left\{ \frac{1}{\left\vert J\right\vert }\sum_{I\subset
J}a_{I}\right\} _{J\in \mathcal{D}}.
\end{equation}
The characterization is then given by the following theorem.

\begin{theorem}
\label{CET-Type} The composition $\mathsf{P}_{b}^{\left( 1,0\right) }\circ 
\mathsf{P}_{d}^{\left( 0,1\right) }$ is bounded on $L^{2}\left( \mathbb{R}%
\right) $ if and only if $\left\Vert \widehat{S}(b\circ
d)\right\Vert_{CM}<\infty$ and $\left\Vert E(b\circ
d)\right\Vert_{\ell^\infty}<\infty$. Moreover, the operator norm of the
composition $\mathsf{P}_{b}^{\left( 1,0\right) }\circ \mathsf{P}_{d}^{\left(
0,1\right) }$ on $L^2\left(\mathbb{R}\right)$ satisfies 
\begin{equation*}
\left\Vert \mathsf{P}_{b}^{\left( 1,0\right) }\circ \mathsf{P}_{d}^{\left(
0,1\right) }\right\Vert _{L^{2}(\mathbb{R})\rightarrow L^{2}(\mathbb{R}%
)}=\left\Vert \mathsf{P}_{b\circ d}^{\left( 1,1\right) }\right\Vert _{L^{2}(%
\mathbb{R})\rightarrow L^{2}(\mathbb{R})}\approx \left\Vert \widehat{S}%
(b\circ d)\right\Vert_{CM}+\left\Vert E(b\circ d)\right\Vert _{\ell^\infty}.
\end{equation*}
\end{theorem}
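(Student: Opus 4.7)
The plan is to reduce the characterization to the single paraproduct $\mathsf{P}_{b\circ d}^{(1,1)}$ and then decompose that operator into pieces governed by the classical paraproduct/Haar-multiplier theory.

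First, using orthonormality of the Haar system one computes directly
$$\mathsf{P}_b^{(1,0)}\circ\mathsf{P}_d^{(0,1)}f \;=\; \sum_{I\in\mathcal{D}} b_I\bigl\langle\mathsf{P}_d^{(0,1)}f,h_I\bigr\rangle h_I^1 \;=\; \sum_{I\in\mathcal{D}} b_I d_I\langle f,h_I^1\rangle h_I^1 \;=\; \mathsf{P}_{b\circ d}^{(1,1)}f,$$
which gives the operator identity $\mathsf{P}_b^{(1,0)}\circ\mathsf{P}_d^{(0,1)} = \mathsf{P}_{b\circ d}^{(1,1)}$ asserted in the theorem (in particular the two operator norms are equal). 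Setting $a:=b\circ d$, it suffices to show $\|\mathsf{P}_a^{(1,1)}\|_{L^2\to L^2}\approx\|\widehat{S}(a)\|_{CM}+\|E(a)\|_{\ell^\infty}$.

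The main step is to establish the operator identity
$$\mathsf{P}_a^{(1,1)} \;=\; \mathsf{P}_{\widehat{S}(a)}^{(0,1)} \;+\; \mathsf{P}_{\widehat{S}(a)}^{(1,0)} \;+\; D_{E(a)},$$
where $D_c$ denotes the Haar multiplier $f\mapsto \sum_I c_I\langle f,h_I\rangle h_I$ (modulo identifying $E(a)$ with the variant $\{|K|^{-1}\sum_{I\subsetneq K}a_I\}_K$, which agrees with the definition in the excerpt up to a bounded diagonal and so leaves the $\ell^\infty$-norm equivalent). To prove it, expand $h_I^1=\sum_{K\supsetneq I}h_K|_I\,h_K$ in the Haar basis to recognize
$$\bigl\langle \mathsf{P}_a^{(1,1)}f,h_K\bigr\rangle \;=\; \sum_{I\subsetneq K}a_I(f)_I\,h_K|_I,$$
then write $(f)_I=(f)_K+((f)_I-(f)_K)$ and expand the martingale difference as $(f)_I-(f)_K=\sum_{I\subsetneq J\subseteq K}\langle f,h_J\rangle h_J|_I$. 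Using that $h_K$ is constant on every $J\subsetneq K$, the Haar coefficient at $K$ collapses to
$$\widehat{S}(a)_K\,(f)_K \;+\; E(a)_K\,\langle f,h_K\rangle \;+\; \sum_{J\subsetneq K} h_K|_J\,\widehat{S}(a)_J\,\langle f,h_J\rangle,$$
and the third summand, after summing over $K$, repackages via the dual expansion $\sum_{K\supsetneq J}h_K|_J\,h_K=h_J^1$ into $\mathsf{P}_{\widehat{S}(a)}^{(1,0)}f$.

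Sufficiency then follows from the triangle inequality together with the classical bounds $\|\mathsf{P}_c^{(0,1)}\|_{L^2\to L^2}\approx\|\mathsf{P}_c^{(1,0)}\|_{L^2\to L^2}\approx\|c\|_{CM}$ (the dyadic $\mathrm{BMO}$ characterization of paraproducts) and $\|D_c\|_{L^2\to L^2}=\|c\|_{\ell^\infty}$. For necessity, I test $\mathsf{P}_a^{(1,1)}$ on the unit vector $h_J$: since $(h_J)_I=h_J|_I\cdot\mathbf{1}_{I\subsetneq J}$, a direct Haar expansion separating the three index regions $K\subsetneq J$, $K=J$, and $K\supsetneq J$, together with the identity $\sum_{K\supsetneq J}|K|^{-1}=|J|^{-1}$, produces the exact equality
$$\bigl\|\mathsf{P}_a^{(1,1)}h_J\bigr\|_{L^2}^2 \;=\; |E(a)_J|^2 \;+\; \frac{1}{|J|}\sum_{K\subseteq J}|\widehat{S}(a)_K|^2,$$
so that taking the supremum over $J\in\mathcal{D}$ extracts both $\|E(a)\|_{\ell^\infty}\lesssim\|\mathsf{P}_a^{(1,1)}\|$ and $\|\widehat{S}(a)\|_{CM}\lesssim\|\mathsf{P}_a^{(1,1)}\|$.

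The main obstacle is the combinatorial bookkeeping behind the operator identity in the second step, specifically recognizing the ``upper-triangular'' remainder left after extracting the averaged piece $(f)_K$ as a second paraproduct of type $(1,0)$ with the \emph{same} symbol $\widehat{S}(a)$, achieved via the dual Haar expansion of $h_J^1$. Once this identity is in hand, the whole characterization is a consequence of classical paraproduct bounds and avoids the heavier two-weight machinery (the tree inequality of \cite{ArRoSa} and NTV-type theorem of \cite{NTV}) that the authors deploy for the more delicate compositions of type $(0,\beta,\gamma,0)$.
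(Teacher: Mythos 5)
Your proposal is correct, and its first half coincides with the paper's argument: the same reduction $\mathsf{P}_{b}^{(1,0)}\circ \mathsf{P}_{d}^{(0,1)}=\mathsf{P}_{b\circ d}^{(1,1)}$, the same Pott--Smith decomposition $\mathsf{P}_{a}^{(1,1)}=\mathsf{P}_{\widehat{S}(a)}^{(1,0)}+\mathsf{P}_{\widehat{S}(a)}^{(0,1)}+\mathsf{P}_{E(a)}^{(0,0)}$ (your $D_{E(a)}$ is the paper's $\mathsf{P}_{E(a)}^{(0,0)}$; you derive the identity by a one-sided Haar expansion plus martingale differences, the paper by expanding $h_I^1$ on both sides and splitting into diagonal, upper and lower triangles --- same identity either way), and the same appeal to Lemma \ref{ClassicalCharacterization} for the upper bound. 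Where you genuinely diverge is the necessity half. The paper extracts $\left\Vert E(a)\right\Vert_{\ell^\infty}$ from the diagonal matrix entries $\left\langle \mathsf{P}_{a}^{(1,1)}h_I,h_I\right\rangle=E(a)_I$, and extracts $\left\Vert \widehat{S}(a)\right\Vert_{CM}$ by pairing $\mathsf{P}_{a}^{(1,1)}$ applied to the adapted test function $F_I=\sum_{J\subset I}\overline{\widehat{S}(a)_J}\,h_J$ against $h_{\hat I}$, using that two of the three pieces of the decomposition annihilate this pairing. You instead prove the single exact identity $\bigl\Vert\mathsf{P}_{a}^{(1,1)}h_J\bigr\Vert_{L^2(\mathbb{R})}^2=\left\vert E(a)_J\right\vert^2+\frac{1}{\left\vert J\right\vert}\sum_{K\subseteq J}\left\vert\widehat{S}(a)_K\right\vert^2$, which I have checked (the three index regimes $K\subsetneq J$, $K=J$, $K\supsetneq J$ contribute exactly as you claim, the last via $\sum_{K\supsetneq J}\left\vert K\right\vert^{-1}=\left\vert J\right\vert^{-1}$) and which delivers both necessary conditions simultaneously from a single family of unit test vectors; this is cleaner and more self-contained than the paper's two-step testing, and does not even require the decomposition for the lower bound. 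One small point to make explicit: your identity produces $E(a)_J$ with the strict sum $\sum_{I\subsetneq J}a_I$, whereas the statement uses $\sum_{I\subset J}a_I$; the discrepancy $a_J/\left\vert J\right\vert$ equals $E(a)_{J_\pm}-E'(a)_J\mp\left\vert J\right\vert^{-1/2}\widehat{S}(a)_J$ up to signs, hence is controlled by $\left\Vert E(a)\right\Vert_{\ell^\infty}+\left\Vert\widehat{S}(a)\right\Vert_{CM}$ in either direction, so the two versions of the theorem are equivalent --- you flag this correctly, and in fact the same imprecision is already present in the paper's own computation of $T^{(0,0)}$.
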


In the case of the composition of type $(0,1,1,0)$ we obtain the following
theorem. To state the characterization again requires slightly more
notation. Given a function $f\in L^2(\mathbb{R})$ and an interval $I\in%
\mathcal{D}$ we let 
\begin{equation}  \label{Project_Fcn}
\mathsf{Q}_I f\equiv\sum_{J\subset I} \left\langle f,h_J\right\rangle_{L^2(%
\mathbb{R})} h_J
\end{equation}
denote the projection of the function $f$ onto the span of the Haar
functions supported within the interval $I$. When applied to a sequence $%
a=\{a_I\}_{I\in\mathcal{D}}$ the operator $\mathsf{Q}_I$ takes the following
form: 
\begin{equation}  \label{Project_Seq}
\mathsf{Q}_I a\equiv\sum_{J\subset I} a_J h_J.
\end{equation}
Notice that this definition encompasses the definition when applied to
functions since we can always identify a function with its sequence of Haar
coefficients.

Our characterization is then the following theorem.

\begin{theorem}
\label{Theorem_Positive} The composition $\mathsf{P}_{b}^{\left( 0,1\right)
}\circ \mathsf{P}_{d}^{\left( 1,0\right) }$ is bounded on $L^{2}\left( 
\mathbb{R}\right) $ if and only if both%
\begin{eqnarray*}
\left\Vert \mathsf{Q}_{I}\mathsf{P}_{b}^{\left( 0,1\right) }\mathsf{P}%
_{d}^{\left( 1,0\right) }\left( \mathsf{Q}_{I}\overline{d}\right)
\right\Vert _{L^{2}(\mathbb{R})}^{2} & \leq & C_1^2 \left\Vert \mathsf{Q}%
_{I}d\right\Vert _{L^{2}(\mathbb{R})}^{2}; \\
\left\Vert \mathsf{Q}_{I}\mathsf{P}_{d}^{\left( 0,1\right) }\mathsf{P}%
_{b}^{\left( 1,0\right) }\left( \mathsf{Q}_{I}\overline{b}\right)
\right\Vert _{L^{2}(\mathbb{R})}^{2} & \leq & C_2^2\left\Vert \mathsf{Q}%
_{I}b\right\Vert _{L^{2}(\mathbb{R})}^{2}
\end{eqnarray*}
for all $I\in\mathcal{D}$; i.e. for all $I\in\mathcal{D}$ the following
inequalities are true 
\begin{eqnarray}
\sum_{J\subset I}\left\vert b_{J}\right\vert^{2}\frac{1}{\left\vert
J\right\vert ^{2}}\left( \sum_{L\subset J}\left\vert
d_{L}\right\vert^{2}\right) ^{2} &\leq & C_1^2\sum_{L\subset I}\left\vert
d_{L}\right\vert^{2};  \notag \\
\sum_{J\subset I}\left\vert d_{J}\right\vert^{2}\frac{1}{\left\vert
J\right\vert ^{2}}\left( \sum_{L\subset J}\left\vert
b_{L}\right\vert^{2}\right) ^{2} & \leq & C_2^2 \sum_{L\subset I}\left\vert
b_{L}\right\vert^{2}.  \notag
\end{eqnarray}%
Moreover, the norm of $\mathsf{P}_{b}^{\left( 0,1\right) }\circ \mathsf{P}%
_{d}^{\left( 1,0\right) }$ on $L^2\left(\mathbb{R}\right)$ satisfies 
\begin{equation*}
\left\Vert \mathsf{P}_{b}^{\left( 0,1\right) }\circ \mathsf{P}_{d}^{\left(
1,0\right) }\right\Vert _{L^{2}\left( \mathbb{R}\right) \rightarrow
L^{2}\left( \mathbb{R}\right) }\approx C_1+C_2
\end{equation*}
where $C_1$ and $C_2$ are the best constants in appearing above.
\end{theorem}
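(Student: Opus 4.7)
The plan here is by direct testing. First I compute explicitly
\[
\mathsf{P}_d^{(1,0)}(\mathsf{Q}_I\overline{d}) \;=\; \sum_{J\subset I}|d_J|^2\,\frac{\mathbf{1}_J}{|J|},
\]
and then show that the Haar coefficient at $L\subset I$ of the vector $\mathsf{Q}_I\mathsf{P}_b^{(0,1)}\mathsf{P}_d^{(1,0)}(\mathsf{Q}_I\overline{d})$ equals $b_L\bigl[\tfrac{1}{|L|}\sum_{J\subseteq L}|d_J|^2+\sum_{L\subsetneq J\subseteq I}|d_J|^2/|J|\bigr]$. Dropping the non-negative tail sum and using the Parseval identity $\|\mathsf{Q}_I\overline{d}\|_{L^2}^2=\sum_{J\subset I}|d_J|^2$, the operator-norm hypothesis immediately forces the first explicit sequence inequality with constant $C_1\lesssim\|\mathsf{P}_b^{(0,1)}\mathsf{P}_d^{(1,0)}\|$. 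The second inequality is obtained by running the same argument on the adjoint $(\mathsf{P}_b^{(0,1)}\mathsf{P}_d^{(1,0)})^{\ast}=\mathsf{P}_d^{(0,1)}\mathsf{P}_b^{(1,0)}$ tested on $\mathsf{Q}_I\overline{b}$. This direction also furnishes the lower half $C_1+C_2\lesssim\|\mathsf{P}_b^{(0,1)}\mathsf{P}_d^{(1,0)}\|$ of the claimed norm comparison.

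\textbf{Sufficiency: transplantation and splitting.} For the converse, the plan is to pass to the Haar coefficient side. A direct calculation gives, for $T:=\mathsf{P}_b^{(0,1)}\circ\mathsf{P}_d^{(1,0)}$,
\[
\widehat{Tf}(I) \;=\; \frac{b_I}{|I|}\sum_{J\subseteq I} d_J\,\widehat{f}(J) \;+\; b_I\sum_{J\supsetneq I}\frac{d_J\,\widehat{f}(J)}{|J|},
\]
so that $T=T_1+T_2$ splits into a ``descending'' (lower-triangular in the dyadic tree) operator $T_1$ and an ``ascending'' (upper-triangular) operator $T_2$. The crucial structural observation I would exploit is that the adjoint $T_2^{\ast}$ has exactly the same form as $T_1$ after swapping the roles of $b$ and $d$ and conjugating; consequently the two hypotheses of the theorem match, up to this duality, the natural cone-testing conditions for $T_1$ and for $T_2^{\ast}$.

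\textbf{Sufficiency: the two estimates and the main obstacle.} I would bound $T_1$ and $T_2$ separately on $\ell^2(\mathcal{D})$. Boundedness of $T_1$ is equivalent to a positive two-weight tree inequality with weights built from $b$ and $d$, so I would invoke the Arcozzi--Rochberg--Sawyer tree theorem; its hypothesis is precisely the testing condition of the theorem applied to the cone vectors $\mathsf{Q}_I\overline{d}$, yielding $\|T_1\|\lesssim C_1$. For $T_2$ the kernel $b_I d_J/|J|\cdot\mathbf{1}_{I\subsetneq J}$ is not positive, so the tree theorem does not apply directly; instead I would transplant to the discrete Bergman-type space on $\mathcal{D}$, in which $T_2$ acts as a martingale-singular operator between two weighted sequence spaces, and invoke an extension of the Nazarov--Treil--Volberg two-weight $T1$-theorem. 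The testing condition produced by the NTV argument, via the $T_1$--$T_2^{\ast}$ symmetry above, coincides with the second hypothesis tested on $\mathsf{Q}_I\overline{b}$, giving $\|T_2\|\lesssim C_2$. Summing the two pieces gives $\|T\|_{L^2\to L^2}\lesssim C_1+C_2$. The principal obstacle will be the $T_2$ analysis: setting up the correct Bergman-transplanted weighted space, identifying the right measures, and verifying that the NTV testing hypotheses reduce cleanly to the second explicit sequence inequality is the technical heart of the proof.
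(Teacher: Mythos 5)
Your overall architecture is the same as the paper's: split the Gram matrix $\langle Tf, h_I\rangle$ of $T=\mathsf{P}_{b}^{(0,1)}\circ\mathsf{P}_{d}^{(1,0)}$ into a lower--triangular piece $T_1$ (entries $\overline{b_I}d_J/|I|$ for $J\subseteq I$) and an upper--triangular piece $T_2$, transplant to the Bergman tree of Carleson tiles, and treat the lower--triangular piece as a two--weight inequality for the adjoint tree integral, handled by the Arcozzi--Rochberg--Sawyer theorem with a single cone--testing condition. Your necessity computation and your formula for $\widehat{Tf}(I)$ are both correct and consistent with the paper's Gram matrix calculation.

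The genuine problem is your treatment of $T_2$. You correctly observe that $T_2^{\ast}$ has exactly the same lower--triangular form as $T_1$ with $b$ and $d$ swapped (and conjugated), but you then abandon that observation and claim the tree theorem ``does not apply'' to $T_2$, proposing instead to invoke an extension of the Nazarov--Treil--Volberg well--localized $T1$ theorem. This is both unnecessary and, as written, a gap. It is unnecessary because $\|T_2\|=\|T_2^{\ast}\|$ and $T_2^{\ast}$, after factoring out the moduli $|b|,|d|$ into the weights exactly as for $T_1$, is again the \emph{positive} operator $\mathsf{U}=\sum_K\widetilde{\mathbf{1}}_{T(K)}\otimes\widetilde{\mathbf{1}}_{Q(K)}$ conjugated by multiplication operators; ArRoSa applies verbatim and its single testing condition is precisely the second hypothesis of the theorem (this is what the paper does, writing $\|T\|\lesssim\|\mathsf{T}_{b,d}^{(0,1,1,0)}\|+\|\mathsf{T}_{d,b}^{(0,1,1,0)}\|$ and running the same argument twice). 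It is a gap because an NTV--type theorem for a non--positive well--localized operator requires \emph{two} testing conditions, forward and backward; only the backward one for $T_2$ (equivalently the forward one for $T_2^{\ast}$) matches the second hypothesis of the theorem, and you give no argument that the remaining forward testing condition for $T_2$ is implied by the stated hypotheses. In the paper the NTV machinery (their Theorem on $\mathsf{U}=\sum_K\widetilde{\mathbf{1}}_{Q_{\pm}(K)}\otimes\widetilde{\mathbf{1}}_{T(K)}$) is reserved for the genuinely signed kernel of the type $(0,1,0,0)$ composition, where both testing conditions do appear in the statement; for the type $(0,1,1,0)$ composition at hand the positivity of both triangular pieces is exactly what lets a single testing condition per piece suffice. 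Replace your $T_2$ step by ``apply the $T_1$ argument to $T_2^{\ast}$ with $b$ and $d$ interchanged'' and the proof closes.

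Two smaller points. First, the passage from the operator testing condition $\|\mathsf{Q}_I T(\mathsf{Q}_I\overline{d})\|\le C_1\|\mathsf{Q}_Id\|$ to the explicit sequence inequality is not an identity: your own computation shows the Haar coefficient at $L$ carries the extra nonnegative tail $\sum_{L\subsetneq J\subseteq I}|d_J|^2/|J|$, so one direction is by dropping the tail and the other direction requires controlling the tail (it is the upper--triangular contribution, bounded using the second condition); this deserves a sentence. Second, when you assert that the ArRoSa hypothesis ``is precisely'' the first testing condition, the identification requires the change--of--basis computation with the unitary $V h_I=\widetilde{\mathbf{1}}_{T(I)}$ and the weights $w=\sum|b_I|^2\mathbf{1}_{T(I)}$, $\sigma=\sum|d_I|^2|I|^{-2}\mathbf{1}_{T(I)}$; it is routine but should be carried out.
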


In the case of composition of type $(0,1,0,0)$, and by duality and symmetry
the type $(0,0,1,0)$, we have the following characterization of the
composition of Haar paraproducts.

\begin{theorem}
\label{Theorem_SIO} The composition $\mathsf{P}_{b}^{\left( 0,1\right)
}\circ \mathsf{P}_{d}^{\left( 0,0\right) }$ is bounded on $L^{2}\left( 
\mathbb{R}\right) $ if and only if both%
\begin{eqnarray*}
\left\vert d_I\right\vert\left\Vert \mathsf{P}_{b}^{\left( 0,1\right) }
h_I\right\Vert_{L^2\left(\mathbb{R}\right)} & \leq & C_1; \\
\left\Vert \mathsf{Q}_I \mathsf{P}_{d}^{\left( 0,0\right) } \mathsf{P}%
_{b}^{\left( 1,0\right) }\mathsf{Q}_{I}\overline{b}\right\Vert _{L^{2}\left(%
\mathbb{R}\right)} & \leq & C_2 \left\Vert \mathsf{Q}_{I} b\right\Vert
_{L^{2}\left(\mathbb{R}\right)} \\
\end{eqnarray*}
for all $I\in\mathcal{D}$.; i.e. for all $I\in\mathcal{D}$ the following
inequalities are true 
\begin{eqnarray*}
\left\vert d_I\right\vert\left(\frac{1}{\left\vert I\right\vert}%
\sum_{L\subsetneq I} \left\vert b_L\right\vert^2\right)^{\frac{1}{2}} & \leq
& C_1; \\
\left(\sum_{J\subset I}\frac{\left\vert d_J\right\vert^2}{\left\vert
J\right\vert}\left(\sum_{K\subset J_+} \left\vert
b_K\right\vert^2-\sum_{K\subset J_-} \left\vert
b_K\right\vert^2\right)^2\right)^{\frac{1}{2}} & \leq & C_2
\left(\sum_{L\subset I}\left\vert b_{L}\right\vert^{2}\right)^{\frac{1}{2}}.
\end{eqnarray*}
Moreover, the norm of $\mathsf{P}_{b}^{\left( 0,1\right) }\circ \mathsf{P}%
_{d}^{\left( 1,0\right) }$ on $L^2\left(\mathbb{R}\right)$ satisfies 
\begin{equation*}
\left\Vert \mathsf{P}_{b}^{\left( 0,1\right) }\circ \mathsf{P}%
_{d}^{\left(0,0\right) }\right\Vert _{L^{2}\left( \mathbb{R}\right)
\rightarrow L^{2}\left( \mathbb{R}\right) }\approx C_1+C_2
\end{equation*}
where $C_1$ and $C_2$ are the best constants in appearing above.
\end{theorem}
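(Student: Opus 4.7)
The plan is to follow the same two-weight transplantation paradigm that the authors use for Theorem~\ref{Theorem_Positive}, with one essential twist: because the kernel of $\mathsf{P}_b^{(0,1)} \mathsf{P}_d^{(0,0)}$ carries the signs of the Haar functions, the transplanted operator is \emph{singular} rather than positive, and we therefore invoke the extension of the Nazarov--Treil--Volberg two-weight theorem for singular operators developed earlier in the paper in place of the Arcozzi--Rochberg--Sawyer tree inequality used in Theorem~\ref{Theorem_Positive}.

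Necessity of both testing inequalities is immediate. Because $\mathsf{P}_d^{(0,0)} h_I = d_I h_I$, applying the composition $T := \mathsf{P}_b^{(0,1)} \mathsf{P}_d^{(0,0)}$ to $h_I$ gives $T h_I = d_I \mathsf{P}_b^{(0,1)} h_I$, so $\|T\| \geq |d_I|\,\|\mathsf{P}_b^{(0,1)} h_I\|_{L^2}$, which is condition 1. Condition 2 follows from the bound $\|T^*\| = \|T\|$ applied to $\mathsf{Q}_I \bar b$, together with the facts that $\mathsf{Q}_I$ is a contraction and $\|\mathsf{Q}_I \bar b\|_{L^2} = \|\mathsf{Q}_I b\|_{L^2}$. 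The two explicit inequalities in the statement then drop out of a direct calculation using $\|\mathsf{P}_b^{(0,1)} h_I\|_{L^2}^{2} = |I|^{-1} \sum_{L \subsetneq I} |b_L|^2$, together with $\mathsf{P}_b^{(1,0)} h_L = (b_L/|L|)\mathbf{1}_L$ and $\langle \mathbf{1}_L, h_J \rangle = h_J(L)\,|L|$ for $L \subsetneq J$, where $h_J(L) = \pm|J|^{-1/2}$ according as $L \subset J_\pm$; substituting these identities turns $\|\mathsf{Q}_I T^* \mathsf{Q}_I \bar b\|_{L^2}^{2}$ into the displayed sum.

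For sufficiency, we first express $T$ as a matrix in the Haar basis, obtaining $G_{K,I} = b_K\, d_I\, h_I(K)\,\mathbf{1}_{K \subsetneq I}$, a dyadic singular operator on $\ell^2(\mathcal{D})$ whose nonzero entries carry the Haar sign pattern. Transplanting this operator to the discrete Bergman-type space on $\mathcal{D}$ (following the mechanism set up in earlier sections) recasts the $L^2(\mathbb{R})$-boundedness of $T$ as a two-weight norm inequality for a singular dyadic operator between weighted $\ell^2$ spaces whose weights are built from $\{|b_I|^2\}$ and $\{|d_I|^2/|I|\}$. The extended NTV characterization then gives boundedness in terms of two families of cube-testing conditions; tracing them back through the transplantation shows that testing the forward operator on the transplants of characteristic functions of dyadic intervals recovers condition 1, while the dual testing, localized to $I$ against the natural dual test function $\mathsf{Q}_I \bar b$, recovers condition 2.

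The main obstacle is that the sign pattern $h_I(K) = \pm|I|^{-1/2}$ is genuinely indispensable: discarding the signs yields a strictly larger positive operator whose boundedness would require a strictly stronger hypothesis than conditions 1 and 2, so we cannot collapse the argument onto the tree inequality route of Theorem~\ref{Theorem_Positive}. The cancellation must be preserved all the way through the transplantation and then exploited by the singular-operator NTV machinery, and checking that the auxiliary Poisson-type and weak-boundedness side hypotheses of that machinery follow from the two stated testing conditions is where the bulk of the technical effort lies.
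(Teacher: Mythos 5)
Your overall architecture is the paper's: compute the Gram matrix $G_{I,J}=\overline{b_I}d_J\widehat{h_I^1}(J)$, note it is supported where $I\subsetneq J$ with the Haar sign pattern, transplant to the Bergman tree so that boundedness becomes a two-weight inequality for the singular operator $\mathsf{U}=\sum_K\widetilde{\mathbf{1}}_{Q_\pm(K)}\otimes\widetilde{\mathbf{1}}_{T(K)}$ with weights built from $\{|b_I|^2\}$ and $\{|d_I|^2\}$, and then invoke an NTV-type characterization by testing conditions. Your necessity argument (testing $T$ on $h_I$ using $\mathsf{P}_d^{(0,0)}h_I=d_Ih_I$, and testing $T^\ast$ on $\mathsf{Q}_I\overline{b}$) matches the paper's.

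The genuine gap is in sufficiency, and it is concentrated in the one sentence where you defer to ``the singular-operator NTV machinery'' and then assert that the bulk of the work is ``checking that the auxiliary Poisson-type and weak-boundedness side hypotheses of that machinery follow from the two stated testing conditions.'' This both omits the key lemma and mischaracterizes it. The paper does not verify Poisson/pivotal or weak-boundedness side conditions; it proves from scratch a self-contained two-weight $T1$ theorem for this specific pair of bases (Theorem~\ref{NTV_Basis}), in which the \emph{only} hypotheses are the forward testing condition on tiles $T(I)$ and the backward testing condition on Carleson squares $Q(I)$. The reason no side conditions are needed is the extreme localization of $\mathsf{U}$: expanding $\langle\mathsf{U}_\mu f,g\rangle_{\nu}$ in the weighted bases $h_I^\mu$ and $H_J^\nu$, the lower-triangular block $\mathbf{B}$ vanishes identically, the diagonal block $\mathbf{A}$ is bounded by the forward testing constant via $\widetilde{\nu}(I)^2\le\nu(I_+)+\nu(I_-)$, and the upper-triangular block $\mathbf{C}$ telescopes into a single Carleson embedding whose Carleson measure condition is exactly the backward testing inequality. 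If side conditions beyond the two testing inequalities were genuinely required, the theorem as stated would be false, since it asserts $\|T\|\approx C_1+C_2$ with no further hypotheses. So to close your argument you must either supply this decomposition (or an equivalent well-localizedness verification in the sense of \cite{NTV} adapted to a pair of bases), and in addition carry out the explicit weight computations that identify the abstract forward and backward testing conditions with the two displayed inequalities of the theorem --- e.g.\ that $\|\mathsf{U}(\sigma\mathbf{1}_{T(I)})\|_{L^2_c(\mathcal{H};w)}=|I|\,|d_I|^2\bigl(\sum_{L\subsetneq I}|b_L|^2\bigr)^{1/2}$ and that the backward testing quantity equals $\sum_{J\subset I}\frac{|d_J|^2}{|J|}\bigl(\sum_{K\subset J_+}|b_K|^2-\sum_{K\subset J_-}|b_K|^2\bigr)^2$ --- neither of which appears in your sketch.
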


The outline of the paper is as follows. In Section \ref{SimpleCharacters} we
carry out the proof of Theorems \ref{Theorem_Simple} and \ref{CET-Type}.
These essentially reduce to the characterizations of when a single
paraproduct is bounded. In Section \ref{DifficultCharacters} we give the
proof of Theorems \ref{Theorem_Positive} and \ref{Theorem_SIO}. These
characterizations are more difficult, but can be studied via techniques used
to obtain two-weight inequalities for positive and well-localized operators.

As an application of these results it is possible to provide a new proof of
the following:

\begin{theorem}[Petermichl, \protect\cite{Petermichl}]
\label{linearA_2} Let $w\in A_2$. Then 
\begin{equation*}
\left\| H\right\|_{L^2(w)\rightarrow L^2(w)}\lesssim\left[w \right]_{A_2}.
\end{equation*}
\end{theorem}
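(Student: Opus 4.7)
The plan is to combine Petermichl's dyadic representation of the Hilbert transform with the paraproduct composition estimates obtained in this paper. First, by Petermichl's representation theorem, $H$ is (up to a universal constant) the average, over all random translations and dilations of the dyadic grid, of the dyadic shift operator
$$\mathsf{S} f \equiv \sum_{I\in\mathcal{D}} \langle f, h_I\rangle_{L^2(\mathbb{R})}\,\tfrac{1}{\sqrt{2}}\bigl(h_{I_+}-h_{I_-}\bigr).$$
It therefore suffices to prove $\|\mathsf{S}\|_{L^2(w)\to L^2(w)}\lesssim [w]_{A_2}$ with constants uniform in the grid, and then average.

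Writing $\sigma=w^{-1}$ and conjugating, the required bound is equivalent to the unweighted estimate $\|M_{\sqrt{w}}\,\mathsf{S}\,M_{\sqrt{\sigma}}\|_{L^2(\mathbb{R})\to L^2(\mathbb{R})}\lesssim [w]_{A_2}$. The next step is to apply the Coifman--Meyer-type Haar decomposition of a multiplication operator,
$$M_f \;=\; \mathsf{P}_{\widehat{f}}^{(0,1)} \;+\; \mathsf{P}_{\widehat{f}}^{(1,0)} \;+\; \mathsf{P}_{\widehat{f}}^{(0,0)},$$
where $\widehat{f}=\{\langle f,h_I\rangle_{L^2(\mathbb{R})}\}_{I\in\mathcal{D}}$, to both $M_{\sqrt{w}}$ and $M_{\sqrt{\sigma}}$. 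Expanding the shift $\mathsf{S}$ in the Haar basis as well and regrouping, the operator $M_{\sqrt{w}}\,\mathsf{S}\,M_{\sqrt{\sigma}}$ becomes a finite sum of compositions $\mathsf{P}_b^{(\alpha,\beta)}\circ \mathsf{P}_d^{(\gamma,\delta)}$ whose symbols $b$ and $d$ are built from the Haar coefficients of $\sqrt{w}$ and $\sqrt{\sigma}$, together with a harmless off-diagonal Haar shift piece.

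The compositions that arise are precisely those characterized in Theorems~\ref{Theorem_Simple}--\ref{Theorem_SIO}. The proof then reduces to verifying the corresponding testing hypotheses with constants $\lesssim [w]_{A_2}$. For the "easy" compositions of types $(0,0,0,1)$, $(1,0,0,0)$, $(0,0,0,0)$ and $(1,0,0,1)$ of Theorems~\ref{Theorem_Simple} and \ref{CET-Type}, the $\ell^\infty$, $CM$, and sweep conditions on $b\circ d$ follow from the identity $\widehat{\sqrt{w}}(I)\,\widehat{\sqrt{\sigma}}(I)\lesssim |I|\,([w]_{A_2}-1)^{1/2}$ combined with Buckley's $A_2$-Carleson embedding. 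For the positive composition of type $(0,1,1,0)$ covered by Theorem~\ref{Theorem_Positive}, the nested testing inequality becomes a bilinear Carleson-embedding statement for $A_2$ weights whose sharp form is exactly Wittwer--Beznosova's "little" Carleson embedding theorem; the singular composition of type $(0,1,0,0)$ in Theorem~\ref{Theorem_SIO} is handled analogously, the second testing condition being an NTV-style telescoping estimate that again reduces to the $A_2$ Carleson embedding.

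The main obstacle will be the last step, namely producing each testing constant with the sharp linear dependence $[w]_{A_2}$ rather than $[w]_{A_2}^{3/2}$. The positive testing condition of Theorem~\ref{Theorem_Positive} is the delicate one: one must sum $|b_J|^2 |J|^{-2}\bigl(\sum_{L\subset J}|d_L|^2\bigr)^2$ with $b,d$ the Haar coefficients of $\sqrt{w},\sqrt{\sigma}$ and obtain $[w]_{A_2}^2\sum_{L\subset I}|d_L|^2$. This is exactly where the classical Bellman-function analysis for linear $A_2$ enters, and once it is in place the remaining compositions follow from analogous, but strictly easier, Carleson-type estimates, completing the proof.
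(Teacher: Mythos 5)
Your proposal follows essentially the same route as the paper's own treatment of Theorem~\ref{linearA_2}: Petermichl's averaging of dyadic shifts, conjugation by $M_{w^{\pm 1/2}}$, the three-term paraproduct decomposition of each multiplication operator, recognition of the nine resulting compositions as those characterized in Theorems~\ref{Theorem_Simple}--\ref{Theorem_SIO}, and finally verification that each testing constant is linear in $[w]_{A_2}$. Be aware that the paper itself only \emph{outlines} this argument and explicitly defers the verification of the testing conditions (the step you correctly identify as the main obstacle) to the companion paper \cite{PSRW}; your write-up is therefore at essentially the same level of completeness as the source, and the Bellman/Carleson-embedding work you gesture at is genuinely where all the remaining effort lies. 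One concrete correction: in the decomposition of a multiplication operator the symbol of the $(0,0)$ paraproduct must be the sequence of \emph{averages} $\left\{ \left\langle f\right\rangle _{I}\right\} _{I\in \mathcal{D}}$, not the Haar coefficients $\widehat{f}$ --- compare the paper's display $M_{w^{\pm \frac{1}{2}}}=\mathsf{P}_{\widehat{w^{\pm \frac{1}{2}}}}^{(0,1)}+\mathsf{P}_{\widehat{w^{\pm \frac{1}{2}}}}^{(1,0)}+\mathsf{P}_{\langle w^{\pm \frac{1}{2}}\rangle }^{(0,0)}$. As you wrote it, the identity $M_{f}=\mathsf{P}_{\widehat{f}}^{(0,1)}+\mathsf{P}_{\widehat{f}}^{(1,0)}+\mathsf{P}_{\widehat{f}}^{(0,0)}$ is false, and the compositions involving a $(0,0)$ factor would then be tested with the wrong symbols; with the correct symbol the diagonal composition of type $(0,0,0,0)$ has Schur product $\langle \sqrt{w}\rangle _{I}\langle \sqrt{w^{-1}}\rangle _{I}$, whose $\ell ^{\infty }$ bound by $[w]_{A_{2}}^{1/2}$ is immediate from Cauchy--Schwarz, which is what makes that piece harmless.
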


Above, the $A_{2}$ characteristic of the function $w$ is the quantity: 
\begin{equation*}
\left[ w\right] _{A_{2}}\equiv \sup_{I\in \mathcal{D}}\left\langle
w\right\rangle _{I}\left\langle w^{-1}\right\rangle _{I};
\end{equation*}%
while the Hilbert transform is defined by 
\begin{equation*}
H(f)(x)\equiv \int_{\mathbb{R}}\frac{f(y)}{y-x}dy,
\end{equation*}%
with the integral taken in the principle value sense. One first notes 
\begin{equation*}
H:L^{2}(w)\rightarrow L^{2}(w)\ \ \ \mathnormal{\Longleftrightarrow \ \ \ }%
M_{w^{\frac{1}{2}}}HM_{w^{-\frac{1}{2}}}:L^{2}\rightarrow L^{2}.
\end{equation*}%
The second reduction is to note that since the Hilbert transform can be
recovered by averaging the Haar shifts, and since we are only after an upper
bound, it will be sufficient to study the following dyadic model operator 
\begin{equation*}
M_{w^{\frac{1}{2}}}\mathcal{S}M_{w^{-\frac{1}{2}}}:L^{2}\rightarrow L^{2}
\end{equation*}%
where $\mathcal{S}$ is a shift operator defined on the Haar basis by $%
\mathcal{S}h_{I}\equiv h_{I_{-}}-h_{I_{+}}$. Because of linearity, it
suffices to consider just \textquotedblleft half\textquotedblright\ of the
shift operator $\mathcal{S}$ defined by the operator $h_{I_{-}}\otimes h_{I}$%
. This averaging of shifts to recover $H$ is the key observation made by
Petermichl in \cite{Petermichl2} and played a decisive role in her proof of
Theorem \ref{linearA_2}. Then note that 
\begin{equation*}
M_{w^{\pm \frac{1}{2}}}=\mathsf{P}_{\widehat{w^{\pm \frac{1}{2}}}}^{(0,1)}+%
\mathsf{P}_{\widehat{w^{\pm \frac{1}{2}}}}^{(1,0)}+\mathsf{P}_{\langle
w^{\pm \frac{1}{2}}\rangle }^{(0,0)}
\end{equation*}%
where, we can recognize the operators above as paraproducts by setting 
\begin{eqnarray*}
\widehat{b}(I) &=&\left\langle b,h_{I}^{0}\right\rangle _{L^{2}\left( 
\mathbb{R}\right) } \\
\left\langle b\right\rangle _{I} &=&\left\langle b,h_{I}^{1}\right\rangle
_{L^{2}\left( \mathbb{R}\right) },
\end{eqnarray*}%
for $I\in \mathcal{D}$. Then writing 
\begin{equation}
\left( \mathsf{P}_{\widehat{w^{\frac{1}{2}}}}^{(0,1)}+\mathsf{P}_{\widehat{%
w^{\frac{1}{2}}}}^{(1,0)}+\mathsf{P}_{\langle w^{\frac{1}{2}}\rangle
}^{(0,0)}\right) \mathcal{S}\left( \mathsf{P}_{\widehat{w^{-\frac{1}{2}}}%
}^{(0,1)}+\mathsf{P}_{\widehat{w^{-\frac{1}{2}}}}^{(1,0)}+\mathsf{P}%
_{\langle w^{-\frac{1}{2}}\rangle }^{(0,0)}\right)   \label{Prod_Expan}
\end{equation}%
we can recognize these terms as composition of paraproducts. One then can
apply the above theorems characterizing the composition of paraproducts, and
then verify that the testing conditions that appear can all be controlled by
a linear power of the $A_{2}$ characteristic. In particular this strengthens
the linear bound for $\mathcal{S}$ on $L^{2}\left( w\right) $ by obtaining
such a bound for each of the nine operators arising in the canonical
decomposition (\ref{Prod_Expan}). A simpler proof of this, but again using
the strategy outlined above, appears in \cite{PSRW}.

\section{Reduction to Single Paraproducts}

\label{SimpleCharacters} Given two sequences $b=\{b_I\}_{I\in \mathcal{D}}$
and $d=\{d_I\}_{I\in \mathcal{D}}$ let $b\circ d$ denote the Schur product
of the sequences, i.e. 
\begin{equation*}
b\circ d\equiv \left\{ b_{I}d_{I}\right\} _{I\in \mathcal{D}}\ .
\end{equation*}%
The composition $\mathsf{P}_{b}^{\left( \alpha ,0\right) }\circ \mathsf{P}%
_{d}^{\left( 0,\beta \right) }$ is given by%
\begin{eqnarray}
\left( \mathsf{P}_{b}^{\left( \alpha ,0\right) }\circ \mathsf{P}_{d}^{\left(
0,\beta \right) }\right) f &=&\mathsf{P}_{b}^{\left( \alpha ,0\right)
}\left( \mathsf{P}_{d}^{\left( 0,\beta \right) }f\right) =\sum_{I\in 
\mathcal{D}}b_{I}\left\langle \mathsf{P}_{d}^{\left( 0,\beta \right)
}f,h_{I}^{0}\right\rangle_{L^2(\mathbb{R})} h_{I}^{\alpha }
\label{composition} \\
&=&\sum_{I\in \mathcal{D}}b_{I}\left\langle \sum_{J\in \mathcal{D}%
}d_{J}\left\langle f,h_{J}^{\beta }\right\rangle_{L^2(\mathbb{R})}
h_{J}^{0},h_{I}^{0}\right\rangle_{L^2(\mathbb{R})} h_{I}^{\alpha }  \notag \\
&=&\sum_{I\in \mathcal{D}}b_{I}d_{I}\left\langle f,h_{I}^{\beta
}\right\rangle_{L^2(\mathbb{R})} h_{I}^{\alpha }  \notag \\
&=&\mathsf{P}_{b\circ d}^{\left( \alpha ,\beta \right) }f\ .  \notag
\end{eqnarray}%
Thus the boundedness of the product $\mathsf{P}_{b}^{\left( \alpha ,0\right)
}\circ \mathsf{P}_{d}^{\left( 0,\beta \right) }$ reduces to that of a single
paraproduct $\mathsf{P}_{b\circ d}^{\left( \alpha ,\beta \right) }$. There
are three cases in which a single paraproduct is easily characterized,
namely $\mathsf{P}_{a}^{\left( 0,0\right) }$, $\mathsf{P}_{a}^{\left(
0,1\right) }$\ and $\mathsf{P}_{a}^{\left( 1,0\right) }$.

\begin{lemma}
\label{ClassicalCharacterization} We have the characterizations%
\begin{eqnarray}
\left\Vert \mathsf{P}_{a}^{\left( 0,0\right) }\right\Vert _{L^{2}(\mathbb{R}%
)\rightarrow L^{2}(\mathbb{R})} &=&\left\Vert a\right\Vert _{\ell ^{\infty
}};  \label{simple} \\
\left\Vert \mathsf{P}_{a}^{\left( 0,1\right) }\right\Vert _{L^{2}(\mathbb{R}%
)\rightarrow L^{2}(\mathbb{R})} &=&\left\Vert \mathsf{P}_{a}^{\left(
1,0\right) }\right\Vert _{L^{2}(\mathbb{R})\rightarrow L^{2}(\mathbb{R}%
)}\approx \left\Vert a\right\Vert _{CM}\ .  \label{CM}
\end{eqnarray}
\end{lemma}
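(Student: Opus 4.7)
The plan is to dispose of each identity in turn, exploiting the orthonormality of the Haar system, a duality argument, and finally the dyadic Carleson embedding theorem.

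For the first identity, I would observe that $\mathsf{P}_a^{(0,0)}$ is diagonal in the Haar basis: since $\{h_I\}_{I\in\mathcal{D}}$ is orthonormal in $L^2(\mathbb{R})$, one has $\mathsf{P}_a^{(0,0)} h_J = a_J h_J$ and more generally $\mathsf{P}_a^{(0,0)} f = \sum_I a_I \langle f, h_I\rangle h_I$. Parseval then gives
\begin{equation*}
\|\mathsf{P}_a^{(0,0)} f\|_{L^2(\mathbb{R})}^2 = \sum_{I\in\mathcal{D}} |a_I|^2 |\langle f, h_I\rangle|^2 \le \|a\|_{\ell^\infty}^2 \, \|f\|_{L^2(\mathbb{R})}^2,
\end{equation*}
with equality achieved in the limit by testing on $f = h_I$ as $I$ runs through a sequence of intervals realizing the supremum.

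For the second identity, the key point is the duality $(\mathsf{P}_a^{(0,1)})^* = \mathsf{P}_{\overline{a}}^{(1,0)}$, which follows directly from the definition: for any $f,g\in L^2(\mathbb{R})$,
\begin{equation*}
\langle \mathsf{P}_a^{(0,1)} f, g\rangle = \sum_I a_I \langle f, h_I^1\rangle \langle h_I, g\rangle = \langle f, \mathsf{P}_{\overline{a}}^{(1,0)} g\rangle.
\end{equation*}
Hence $\|\mathsf{P}_a^{(0,1)}\|_{L^2\to L^2} = \|\mathsf{P}_a^{(1,0)}\|_{L^2\to L^2}$, and it suffices to establish one of them.

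For the equivalence with $\|a\|_{CM}$, I would appeal to the dyadic Carleson embedding theorem: a sequence $\{\mu_I\}_{I\in\mathcal{D}}$ of nonnegative numbers satisfies $\sum_{I\in\mathcal{D}} \mu_I |\langle f\rangle_I|^2 \le C\|f\|_{L^2(\mathbb{R})}^2$ for all $f\in L^2(\mathbb{R})$ if and only if $\sup_{I\in\mathcal{D}} \frac{1}{|I|} \sum_{J\subset I} \mu_J < \infty$, with the best constant $C$ comparable to this supremum. Applying this with $\mu_I = |a_I|^2$, and using orthonormality of $\{h_I\}$ to write
\begin{equation*}
\|\mathsf{P}_a^{(0,1)} f\|_{L^2(\mathbb{R})}^2 = \sum_{I\in\mathcal{D}} |a_I|^2 |\langle f\rangle_I|^2,
\end{equation*}
yields the sufficiency with $\|\mathsf{P}_a^{(0,1)}\|_{L^2\to L^2} \lesssim \|a\|_{CM}$. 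Necessity is handled by testing on $f = \mathbf{1}_I / \sqrt{|I|}$: for such $f$ one has $\langle f\rangle_J = 1/\sqrt{|I|}$ for every $J\subset I$, so $\|\mathsf{P}_a^{(0,1)} f\|_{L^2}^2 = \frac{1}{|I|}\sum_{J\subset I} |a_J|^2$, which must be bounded by $\|\mathsf{P}_a^{(0,1)}\|^2$.

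The only non-trivial ingredient is the Carleson embedding theorem itself, which is classical; the rest is orthonormality and a one-line duality. No step poses a real obstacle beyond quoting Carleson embedding.
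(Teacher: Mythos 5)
Your proof is correct and follows essentially the same route as the paper's: Parseval's identity for the $(0,0)$ case, and duality plus the dyadic Carleson embedding theorem plus a testing argument for the $(0,1)$ and $(1,0)$ cases. The only cosmetic differences are that you test on the normalized indicator $\mathbf{1}_I/\sqrt{|I|}$ rather than on the Haar function $h_{\hat I}$ of the parent interval, and the displayed equality $\|\mathsf{P}_a^{(0,1)}f\|_{L^2}^2=\frac{1}{|I|}\sum_{J\subset I}|a_J|^2$ should really be $\geq$, since the ancestors $J\supsetneq I$ also contribute nonzero terms $\langle f\rangle_J=\sqrt{|I|}/|J|$ --- but the inequality goes in the direction you need, so the argument stands.
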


\begin{proof}
With the notation%
\begin{equation*}
\widehat{f}\left( K\right) =\left\langle f,h_{K}\right\rangle_{L^2(\mathbb{R}%
)},
\end{equation*}%
the identities%
\begin{eqnarray*}
\left\Vert \mathsf{P}_{a}^{\left( 0,0\right) }f\right\Vert _{L^{2}(\mathbb{R}%
)}^{2} &=&\sum_{I,I^{\prime }\in \mathcal{D}}a_{I}\overline{a_{I^{\prime }}}%
\left\langle f,h_{I}\right\rangle_{L^2(\mathbb{R})} \overline{\left\langle
f,h_{I^{\prime }}\right\rangle_{L^2(\mathbb{R})}} \left\langle
h_{I},h_{I^{\prime }}\right\rangle_{L^2(\mathbb{R})} \\
& = & \sum_{I\in \mathcal{D}}\left\vert a_{I}\right\vert^{2}\left\vert 
\widehat{f}\left( I\right) \right\vert ^{2}, \\
\left\Vert f\right\Vert _{L^{2}(\mathbb{R})}^{2} &=&\sum_{I\in \mathcal{D}%
}\left\vert \widehat{f}\left( I\right) \right\vert ^{2},
\end{eqnarray*}%
immediately gives \eqref{simple}. Then the Carleson Embedding Theorem gives%
\begin{eqnarray*}
\left\Vert \mathsf{P}_{a}^{\left( 0,1\right) }f\right\Vert _{L^{2}(\mathbb{R}%
)}^{2} &=&\sum_{I,I^{\prime }\in \mathcal{D}}a_{I}\overline{a_{I^{\prime }}}%
\left\langle f,h_{I}^{1}\right\rangle_{L^2(\mathbb{R})} \overline{%
\left\langle f,h_{I^{\prime}}^{1}\right\rangle_{L^2(\mathbb{R})}}
\left\langle h_{I},h_{I^{\prime }}\right\rangle_{L^2(\mathbb{R})} \\
&=&\sum_{I\in \mathcal{D}}\left\vert a_{I}\right\vert^{2}\left\vert
\left\langle f,h_{I}^{1}\right\rangle_{L^2(\mathbb{R})} \right\vert
^{2}\lesssim \left\{ \sup_{I\in\mathcal{D}}\frac{1}{\left\vert I\right\vert }%
\sum_{J\subset I}\left\vert a_{J}\right\vert^{2}\right\} \left\Vert
f\right\Vert _{L^{2}(\mathbb{R})}^{2}.
\end{eqnarray*}%
So we have that 
\begin{equation*}
\left\Vert \mathsf{P}_{a}^{\left( 0,1\right) }\right\Vert _{L^{2}(\mathbb{R}%
)\rightarrow L^{2}(\mathbb{R})}\lesssim \left\Vert a\right\Vert_{CM}.
\end{equation*}
To see that the other inequality holds, simply test on a Haar function.
Indeed, let $\hat{I}$ denote the parent of $I$, and then we have 
\begin{equation*}
\left\| \mathsf{P}_{a}^{\left( 0,1\right) }h_{\hat{I}}\right\|_{L^2(\mathbb{R%
})}^2\leq \left\| \mathsf{P}_{a}^{\left( 0,1\right) }\right\|_{L^2(\mathbb{R}%
)\rightarrow L^2(\mathbb{R})}^2.
\end{equation*}
However, a computation shows that 
\begin{eqnarray*}
\left\| \mathsf{P}_{a}^{\left( 0,1\right) }h_{\hat{I}}\right\|_{L^2(\mathbb{R%
})}^2 & = & \left\| \sum_{J\in\mathcal{D}} a_J \left\langle h_{\hat{I}},
h_J^{1}\right\rangle h_J\right\|_{L^2(\mathbb{R})}^2 \\
& = & \sum_{J\in\mathcal{D}} \left\vert a_J\right\vert^2
\left\vert\left\langle h_{\hat{I}}, h_{J}^1\right\rangle_{L^2(\mathbb{R}%
)}\right\vert^2 \\
& \gtrsim & \frac{1}{\left\vert I\right\vert} \sum_{J\subset I}\left\vert
a_J\right\vert^2,
\end{eqnarray*}
which proves \eqref{CM}.
\end{proof}

It is clear that Lemma \ref{ClassicalCharacterization} coupled with the
computations above prove Theorem \ref{Theorem_Simple}.

\subsection{The Pott-Smith Identity}

We now recall a useful identity obtained by Pott and Smith in \cite{PS}*{%
Proposition 2.3} related to the composition of certain types of
paraproducts. We will use the identity to obtain necessary and sufficient
conditions for $\mathsf{P}_{a}^{\left(1,1\right)}$ to be bounded on $L^2(%
\mathbb{R})$.

We remind the reader that for a symbol $a=\left\{ a_{I}\right\} _{I\in 
\mathcal{D}}$, in equation \eqref{Sweep} the sweep $\widehat{S}\left(
a\right) $ of $a$ was defined by%
\begin{equation*}
\widehat{S}\left( a\right) \equiv \left\{ \left\langle \sum_{J\in \mathcal{D}%
}a_{J}h_{J}^{1},h_{I}\right\rangle_{L^2(\mathbb{R})} \right\} _{I\in 
\mathcal{D}}=\left\{ \sum_{J\subsetneqq I}a_{J}\widehat{h_{J}^{1}}\left(
I\right) \right\} _{I\in \mathcal{D}},
\end{equation*}%
and equation \eqref{Ea} we defined the sequence $E(a)$ by%
\begin{equation*}
E(a)\equiv\left\{ \frac{1}{\left\vert J\right\vert }\sum_{I\subset
J}a_{I}\right\} _{J\in \mathcal{D}}.
\end{equation*}

We now decompose the paraproduct $\mathsf{P}_{a}^{\left( 1,1\right) }$ into
paraproducts with simpler types, each having at least one $0$ in the index.
The most natural idea is to expand the averaging functions in a Haar series: 
$h_{I}^{1}=\sum_{J\supsetneqq I}\widehat{h_{I}^{1}}\left( J\right) h_{J}$,
and then to split the resulting double sum over intervals into diagonal,
upper and lower parts. Carrying out this strategy we obtain:%
\begin{eqnarray*}
\mathsf{P}_{a}^{\left( 1,1\right) }f &=&\sum_{I\in \mathcal{D}%
}a_{I}\left\langle f,h_{I}^{1}\right\rangle_{L^2(\mathbb{R})} h_{I}^{1} \\
& = & \sum_{I\in \mathcal{D}}a_{I}\left\langle f,\left( \sum_{J\supsetneqq I}%
\widehat{h_{I}^{1}}\left( J\right) h_{J}\right) \right\rangle_{L^2(\mathbb{R}%
)} \left( \sum_{K\supsetneqq I}\widehat{h_{I}^{1}}\left( K\right)
h_{K}\right) \\
&=&\left\{ \sum_{J\subsetneqq K}+\sum_{K\subsetneqq J}+\sum_{J=K}\right\}
\sum_{I\subset J\cap K}a_{I}\widehat{h_{I}^{1}}\left( J\right) \widehat{f}%
\left( J\right) \widehat{h_{I}^{1}}\left( K\right) h_{K} \\
&\equiv &T^{\left( 1,0\right) }f+T^{\left( 0,1\right) }f+T^{\left(
0,0\right) }f\ .
\end{eqnarray*}%
Now we have%
\begin{eqnarray*}
T^{\left( 0,0\right) }f &=&\sum_{J\in \mathcal{D}}\sum_{I\subset J}a_{I}%
\widehat{h_{I}^{1}}\left( J\right) \widehat{f}\left( J\right) \widehat{%
h_{I}^{1}}\left( J\right) h_{J}=\sum_{J\in \mathcal{D}}\sum_{I\subset
J}a_{I} \widehat{h_{I}^{1}}\left( J\right) ^{2}\widehat{f}\left( J\right)
h_{J} \\
&=&\sum_{J\in \mathcal{D}}\left( \frac{1}{\left\vert J\right\vert }%
\sum_{I\subset J}a_{I}\right) \widehat{f}\left( J\right) h_{J}=\mathsf{P}%
_{E(a)}^{\left( 0,0\right) }f
\end{eqnarray*}%
where we have use the definition of $E(a)$ in \eqref{Ea}. We also have with%
\begin{equation*}
\left\{ \sum_{I\subset J}a_{I}\widehat{h_{I}^{1}}\left( J\right) \right\}
_{J\in \mathcal{D}}=\left\{ \left\langle \sum_{I\subsetneqq
J}a_{I}h_{I}^{1},h_{J}\right\rangle_{L^2(\mathbb{R})} \right\} _{J\in 
\mathcal{D}}=\left\{ \widehat{\sum_{I\in \mathcal{D}}a_{I}h_{I}^{1}}\left(
J\right) \right\} _{J\in \mathcal{D}}
\end{equation*}%
that%
\begin{eqnarray*}
T^{\left( 1,0\right) }f &=&\sum_{J\subsetneqq K}\sum_{I\subset J}a_{I}%
\widehat{h_{I}^{1}}\left( J\right) \widehat{f}\left( J\right) \widehat{%
h_{I}^{1}}\left( K\right) h_{K}=\sum_{J\in \mathcal{D}}\sum_{I\subset
J}\sum_{K\supsetneqq J}a_{I}\widehat{h_{I}^{1}}\left( J\right) \widehat{f}%
\left( J\right) \widehat{h_{I}^{1}}\left( K\right) h_{K} \\
&=&\sum_{J\in \mathcal{D}}\left( \sum_{I\subset J}a_{I}\widehat{h_{I}^{1}}%
\left( J\right) \right) \widehat{f}\left( J\right) \left( \sum_{K\supsetneqq
J}\widehat{h_{J}^{1}}\left( K\right) h_{K}\right) \\
&=&\sum_{J\in \mathcal{D}}\widehat{\sum_{I\subset J}a_{I}h_{I}^{1}}\left(
J\right) \ \widehat{f}\left( J\right) \ h_{J}^{1}=\mathsf{P}_{\widehat{S}%
\left( a\right) }^{\left( 1,0\right) }f.
\end{eqnarray*}%
Similarly,%
\begin{equation*}
T^{\left( 0,1\right) }f=\mathsf{P}_{\widehat{S}\left( a\right) }^{\left(
0,1\right) }f,
\end{equation*}%
and altogether we have the desired decomposition%
\begin{equation}
\mathsf{P}_{a}^{\left( 1,1\right) }=\mathsf{P}_{\widehat{S}\left( a\right)
}^{\left( 1,0\right) }+\mathsf{P}_{\widehat{S}\left( a\right) }^{\left(
0,1\right) }+\mathsf{P}_{E(a)}^{\left( 0,0\right) }\ .  \label{PS decomp}
\end{equation}

Thus we see that the single paraproduct $\mathsf{P}_{a}^{\left( 1,1\right) }$
not covered by \eqref{simple} reduces to types already characterized. Using
this we can then obtain the characterization of the paraproduct $\mathsf{P}%
_{a}^{(1,1)}$.

\begin{corollary}
\label{ClassicalCharacterization2} The operator norm $\left\Vert \mathsf{P}%
_{a}^{\left( 1,1\right) }\right\Vert _{L^{2}\left( \mathbb{R}\right)
\rightarrow L^{2}\left( \mathbb{R}\right) }$ of $\mathsf{P}_{a}^{\left(
1,1\right) }$ on $L^{2}\left( \mathbb{R}\right) $ satisfies%
\begin{equation}
\left\Vert \mathsf{P}_{a}^{\left( 1,1\right) }\right\Vert _{L^{2}\left( 
\mathbb{R}\right) \rightarrow L^{2}\left( \mathbb{R}\right) }\approx
\left\Vert \widehat{S}(a)\right\Vert_{CM}+\left\Vert E(a)\right\Vert
_{\ell^\infty}.
\end{equation}
\end{corollary}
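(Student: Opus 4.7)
The plan is to exploit the Pott--Smith decomposition \eqref{PS decomp} to reduce everything to the three simple types of single paraproduct already handled in Lemma~\ref{ClassicalCharacterization}. The upper bound is immediate: the triangle inequality applied to $\mathsf{P}_{a}^{(1,1)} = \mathsf{P}_{\widehat{S}(a)}^{(1,0)} + \mathsf{P}_{\widehat{S}(a)}^{(0,1)} + \mathsf{P}_{E(a)}^{(0,0)}$, combined with the three norm identities in Lemma~\ref{ClassicalCharacterization}, yields $\|\mathsf{P}_{a}^{(1,1)}\|_{L^{2}\to L^{2}} \lesssim \|\widehat{S}(a)\|_{CM} + \|E(a)\|_{\ell^{\infty}}$.

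For the lower bound the one substantive step is to extract $\|\widehat{S}(a)\|_{CM}$ by testing $\mathsf{P}_{a}^{(1,1)}$ on the indicator $\mathbf{1}_{I}$. Using $\langle \mathbf{1}_{I}, h_{K}^{1}\rangle = 1$ for $K \subseteq I$ and $|I|/|K|$ for $K \supsetneq I$, I would first write $\mathsf{P}_{a}^{(1,1)} \mathbf{1}_{I} = \sum_{K\subseteq I} a_{K} h_{K}^{1} + \sum_{K\supsetneq I} a_{K}(|I|/|K|) h_{K}^{1}$. The pairing $\langle h_{K}^{1}, h_{L}\rangle = \widehat{h_{K}^{1}}(L)$ vanishes unless $K \subsetneqq L$, so for every $L \subsetneqq I$ the $L$-th Haar coefficient of $\mathsf{P}_{a}^{(1,1)}\mathbf{1}_{I}$ collapses to $\sum_{K\subsetneqq L} a_{K}\widehat{h_{K}^{1}}(L) = \widehat{S}(a)_{L}$, independently of $I$. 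Bessel's inequality then gives $\sum_{L\subsetneqq I} |\widehat{S}(a)_{L}|^{2} \leq \|\mathsf{P}_{a}^{(1,1)}\mathbf{1}_{I}\|_{L^{2}}^{2} \leq \|\mathsf{P}_{a}^{(1,1)}\|^{2}|I|$; passing from $I$ to its dyadic parent absorbs the missing endpoint term $L = I$ and yields $\|\widehat{S}(a)\|_{CM} \lesssim \|\mathsf{P}_{a}^{(1,1)}\|$.

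With $\widehat{S}(a)$ controlled, the estimate $\|E(a)\|_{\ell^{\infty}} \lesssim \|\mathsf{P}_{a}^{(1,1)}\|$ then follows for free by solving \eqref{PS decomp} for the diagonal piece, $\mathsf{P}_{E(a)}^{(0,0)} = \mathsf{P}_{a}^{(1,1)} - \mathsf{P}_{\widehat{S}(a)}^{(1,0)} - \mathsf{P}_{\widehat{S}(a)}^{(0,1)}$, and invoking Lemma~\ref{ClassicalCharacterization} once more, which both identifies $\|\mathsf{P}_{E(a)}^{(0,0)}\|_{L^{2}\to L^{2}}$ with $\|E(a)\|_{\ell^{\infty}}$ and bounds the two Carleson pieces by $\|\widehat{S}(a)\|_{CM}$; the previous paragraph then closes the loop. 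The only delicate point in the entire argument is to keep the strict/non-strict containment bookkeeping straight in the pairings $\langle h_{K}^{1}, h_{L}\rangle$, so that the sweep $\widehat{S}(a)_{L}$ appears on the right-hand side of the test identity exactly as written in \eqref{Sweep}; beyond that there is no genuine obstacle.
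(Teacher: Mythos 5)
Your proposal is correct, and while the upper bound (Pott--Smith decomposition plus Lemma~\ref{ClassicalCharacterization} and the triangle inequality) coincides with the paper's, your lower bound takes a genuinely different route. The paper first extracts $E(a)_I$ directly from the diagonal pairing $\langle \mathsf{P}_a^{(1,1)}h_I,h_I\rangle$, and then obtains the Carleson condition on $\widehat{S}(a)$ by testing against the dualized function $F_I=\sum_{J\subset I}\overline{\widehat{S}(a)_J}\,h_J$ paired with $h_{\hat I}$, using the decomposition \eqref{PS decomp} to kill the $(0,0)$ and $(0,1)$ pieces and leave $|\hat I|^{-1/2}\sum_{K\subset I}|\widehat{S}(a)_K|^2$. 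You instead test on the indicator $\mathbf{1}_I$: your computation of the Haar coefficients of $\mathsf{P}_a^{(1,1)}\mathbf{1}_I$ at scales $L\subsetneqq I$ is right --- the ancestor terms $K\supsetneq I$ cannot contribute because $\widehat{h_K^1}(L)\neq 0$ forces $K\subsetneqq L\subsetneqq I$ --- so Bessel's inequality gives the sweep bound in one stroke, and the passage to the parent interval correctly absorbs the endpoint term. You then reverse the paper's order and recover $\|E(a)\|_{\ell^\infty}$ by subtracting the two now-controlled Carleson pieces from $\mathsf{P}_a^{(1,1)}$ and invoking the exact identity $\|\mathsf{P}_{E(a)}^{(0,0)}\|=\|E(a)\|_{\ell^\infty}$. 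Your argument is arguably more economical: it avoids constructing $F_I$ and replaces the duality pairing by a single application of Bessel, at the modest cost of making the $E(a)$ estimate depend on the $\widehat{S}(a)$ estimate rather than being independent of it (the paper's diagonal test gives the sharp constant $\|E(a)\|_{\ell^\infty}\le\|\mathsf{P}_a^{(1,1)}\|$ with no implied constant, whereas yours picks one up). Both arguments share the same implicit convergence conventions for the infinite sums defining $\mathsf{P}_a^{(1,1)}$ on non-finite Haar expansions, so no additional rigor is lost.
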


\begin{proof}
From \eqref{PS decomp} by applying Lemma \ref{ClassicalCharacterization} we
have the following estimate 
\begin{equation}  \label{Upper_Ineq}
\left\Vert \mathsf{P}_{a}^{\left( 1,1\right) }\right\Vert _{L^{2}\left( 
\mathbb{R}\right) \rightarrow L^{2}\left( \mathbb{R}\right) }\lesssim
\left\Vert \widehat{S}(a)\right\Vert_{CM}+\left\Vert E(a)\right\Vert
_{\ell^\infty}.
\end{equation}

We now turn to showing that inequality (\ref{Upper_Ineq}) can be reversed.
Suppose that $\left\Vert \mathsf{P}_{a}^{\left( 1,1\right) }\right\Vert
_{L^{2}\left( \mathbb{R}\right) \rightarrow L^{2}\left( \mathbb{R}\right) }$
is finite. Then an easy computation shows that 
\begin{eqnarray*}
\left\langle \mathsf{P}_{a}^{\left( 1,1\right) }h_{I},h_{I}\right\rangle
_{L^{2}(\mathbb{R})} &=&\left\langle \mathsf{P}_{\widehat{S}\left( a\right)
}^{\left( 1,0\right) }+\mathsf{P}_{\widehat{S}\left( a\right) }^{\left(
0,1\right) }+\mathsf{P}_{E(a)}^{\left( 0,0\right) }h_{I},h_{I}\right\rangle
_{L^{2}(\mathbb{R})} \\
&=&\left\langle \mathsf{P}_{\widehat{S}\left( a\right) }^{\left( 1,0\right)
}h_{I},h_{I}\right\rangle _{L^{2}(\mathbb{R})}+\left\langle \mathsf{P}_{%
\widehat{S}\left( a\right) }^{\left( 0,1\right) }h_{I},h_{I}\right\rangle
_{L^{2}(\mathbb{R})}+\left\langle \mathsf{P}_{E(a)}^{\left( 0,0\right)
}h_{I},h_{I}\right\rangle _{L^{2}(\mathbb{R})} \\
&=&E(a)_{I}
\end{eqnarray*}%
since 
\begin{equation*}
\left\langle \mathsf{P}_{\widehat{S}\left( a\right) }^{\left( 1,0\right)
}h_{I},h_{I}\right\rangle _{L^{2}(\mathbb{R})}=\widehat{S}\left( a\right)
_{I}\left\langle h_{I}^{1},h_{I}\right\rangle _{L^{2}(\mathbb{R})}=0.
\end{equation*}%
A similar computation demonstrates that $\left\langle \mathsf{P}_{\widehat{S}%
\left( a\right) }^{\left( 0,1\right) }h_{I},h_{I}\right\rangle _{L^{2}(%
\mathbb{R})}=0$ as well. Thus, we have 
\begin{equation}
\left\Vert E(a)\right\Vert _{\ell ^{\infty }}=\sup_{I\in \mathcal{D}%
}\left\vert E(a)_{I}\right\vert \leq \sup_{I\in \mathcal{D}}\left\vert
\left\langle \mathsf{P}_{a}^{\left( 1,1\right) }h_{I},h_{I}\right\rangle
_{L^{2}(\mathbb{R})}\right\vert \leq \left\Vert \mathsf{P}_{a}^{\left(
1,1\right) }\right\Vert _{L^{2}\left( \mathbb{R}\right) \rightarrow
L^{2}\left( \mathbb{R}\right) }.  \label{Eabounded}
\end{equation}%
Again, let $\hat{I}$ denote the parent of the dyadic interval $I$. Now set 
\begin{equation*}
F_{I}\equiv \sum_{J\subset I}\overline{\widehat{S}(a)_{J}}h_{J}.
\end{equation*}%
Then simple straightforward computations demonstrate that 
\begin{eqnarray*}
\left\Vert F_{I}\right\Vert _{L^{2}(\mathbb{R})}^{2} &=&\sum_{J\subset
I}\left\vert \widehat{S}(a)_{J}\right\vert ^{2}, \\
\left\langle F_{I},h_{\hat{I}}\right\rangle _{L^{2}(\mathbb{R})} &=&0, \\
\left\langle F_{I},h_{\hat{I}}^{1}\right\rangle _{L^{2}(\mathbb{R})} &=&0.
\end{eqnarray*}%
First, observe that 
\begin{equation}
\left\vert \left\langle \mathsf{P}_{a}^{(1,1)}F_{I},h_{\hat{I}}\right\rangle
\right\vert _{L^{2}(\mathbb{R})}\leq \left\Vert \mathsf{P}_{a}^{\left(
1,1\right) }\right\Vert _{L^{2}\left( \mathbb{R}\right) \rightarrow
L^{2}\left( \mathbb{R}\right) }\left( \sum_{J\subset I}\left\vert \widehat{S}%
(a)_{J}\right\vert ^{2}\right) ^{\frac{1}{2}}.  \label{Upper}
\end{equation}%
Next, observe that the computations above involving $F_{I}$ give that 
\begin{eqnarray*}
\left\langle P_{E(a)}^{(0,0)}F_{I},h_{\hat{I}}\right\rangle _{L^{2}(\mathbb{R%
})} &=&\sum_{K\in \mathcal{D}}E(a)_{K}\left\langle F_{I},h_{K}\right\rangle
_{L^{2}(\mathbb{R})}\left\langle h_{K},h_{\hat{I}}\right\rangle _{L^{2}(%
\mathbb{R})} \\
&=&E(a)_{\hat{I}}\left\langle F_{I},h_{\hat{I}}\right\rangle _{L^{2}(\mathbb{%
R})}=0;
\end{eqnarray*}%
and 
\begin{eqnarray*}
\left\langle P_{\widehat{S}(a)}^{(0,1)}F_{I},h_{\hat{I}}\right\rangle
_{L^{2}(\mathbb{R})} &=&\sum_{K\in \mathcal{D}}\widehat{S}(a)(K)\left\langle
F_{I},h_{K}^{1}\right\rangle _{L^{2}(\mathbb{R})}\left\langle h_{K},h_{\hat{I%
}}\right\rangle _{L^{2}(\mathbb{R})} \\
&=&\widehat{S}(a)(\hat{I})\left\langle F_{I},h_{\hat{I}}^{1}\right\rangle
_{L^{2}(\mathbb{R})}=0.
\end{eqnarray*}%
Thus, using \eqref{PS decomp}, \eqref{Upper}, and the computations above we
have that 
\begin{equation}
\left\vert \left\langle \mathsf{P}_{\widehat{S}(a)}^{(1,0)}F_{I},h_{\hat{I}%
}\right\rangle _{L^{2}(\mathbb{R})}\right\vert =\left\vert \left\langle 
\mathsf{P}_{a}^{(1,1)}F_{I},h_{\hat{I}}\right\rangle \right\vert _{L^{2}(%
\mathbb{R})}\leq \left\Vert \mathsf{P}_{a}^{\left( 1,1\right) }\right\Vert
_{L^{2}\left( \mathbb{R}\right) \rightarrow L^{2}\left( \mathbb{R}\right)
}\left( \sum_{J\subset I}\left\vert \widehat{S}(a)_{J}\right\vert
^{2}\right) ^{\frac{1}{2}}.  \label{Redux}
\end{equation}%
Finally, we compute 
\begin{eqnarray}
\left\vert \left\langle \mathsf{P}_{\widehat{S}(a)}^{(1,0)}F_{I},h_{\hat{I}%
}\right\rangle _{L^{2}(\mathbb{R})}\right\vert  &=&\left\vert \sum_{K\in 
\mathcal{D}}\widehat{S}(a)_{K}\left\langle F_{I},h_{K}\right\rangle _{L^{2}(%
\mathbb{R})}\left\langle h_{K}^{1},h_{\hat{I}}\right\rangle _{L^{2}(\mathbb{R%
})}\right\vert   \notag \\
&=&\left\vert \sum_{K\subset I}\left\vert \widehat{S}(a)_{K}\right\vert
^{2}\left\langle h_{K}^{1},h_{\hat{I}}\right\rangle _{L^{2}(\mathbb{R}%
)}\right\vert   \notag \\
&=&\frac{1}{\sqrt{\left\vert \hat{I}\right\vert }}\sum_{K\subset
I}\left\vert \widehat{S}(a)_{K}\right\vert ^{2}.  \label{Redux2}
\end{eqnarray}%
Combining \eqref{Redux} and \eqref{Redux2} yields 
\begin{equation*}
\frac{1}{\sqrt{\left\vert I\right\vert }}\sum_{K\subset I}\left\vert 
\widehat{S}(a)_{K}\right\vert ^{2}\lesssim \left\Vert \mathsf{P}_{a}^{\left(
1,1\right) }\right\Vert _{L^{2}\left( \mathbb{R}\right) \rightarrow
L^{2}\left( \mathbb{R}\right) }\left( \sum_{J\subset I}\left\vert \widehat{S}%
(a)_{J}\right\vert ^{2}\right) ^{\frac{1}{2}},
\end{equation*}%
which gives 
\begin{equation*}
\left( \frac{1}{\left\vert I\right\vert }\sum_{J\subset I}\left\vert 
\widehat{S}(a)_{J}\right\vert ^{2}\right) ^{\frac{1}{2}}\lesssim \left\Vert 
\mathsf{P}_{a}^{\left( 1,1\right) }\right\Vert _{L^{2}\left( \mathbb{R}%
\right) \rightarrow L^{2}\left( \mathbb{R}\right) }
\end{equation*}%
and then taking the supremum over $I\in \mathcal{D}$ gives. 
\begin{equation}
\left\Vert \widehat{S}(a)\right\Vert _{CM}\lesssim \left\Vert \mathsf{P}%
_{a}^{\left( 1,1\right) }\right\Vert _{L^{2}\left( \mathbb{R}\right)
\rightarrow L^{2}\left( \mathbb{R}\right) }.  \label{SaCM}
\end{equation}%
Combining \eqref{Eabounded} and \eqref{SaCM} gives 
\begin{equation}
\left\Vert E(a)\right\Vert _{\ell ^{\infty }}+\left\Vert \widehat{S}%
(a)\right\Vert _{CM}\lesssim \left\Vert \mathsf{P}_{a}^{\left( 1,1\right)
}\right\Vert _{L^{2}\left( \mathbb{R}\right) \rightarrow L^{2}\left( \mathbb{%
R}\right) }.  \label{Lower_Ineq}
\end{equation}%
Then \eqref{Upper_Ineq} and \eqref{Lower_Ineq} prove the Corollary.
\end{proof}

When studying the composition $\mathsf{P}_{b}^{\left( 1,0\right)}\circ 
\mathsf{P}_{d}^{\left( 0,1\right) }$ identity \eqref{composition} along with
Corollary \ref{ClassicalCharacterization2} yields the following result.

\begin{corollary}
The operator norm of the composition $\mathsf{P}_{b}^{\left( 1,0\right)
}\circ \mathsf{P}_{d}^{\left( 0,1\right) }$ satisfies 
\begin{equation*}
\left\Vert \mathsf{P}_{b}^{\left( 1,0\right) }\circ \mathsf{P}_{d}^{\left(
0,1\right) }\right\Vert _{L^{2}(\mathbb{R})\rightarrow L^{2}(\mathbb{R}%
)}=\left\Vert \mathsf{P}_{b\circ d}^{\left( 1,1\right) }\right\Vert _{L^{2}(%
\mathbb{R})\rightarrow L^{2}(\mathbb{R})}\approx \left\Vert \widehat{S}%
(b\circ d)\right\Vert_{CM}+\left\Vert E(b\circ d)\right\Vert _{\ell^\infty}.
\end{equation*}
\end{corollary}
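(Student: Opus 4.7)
The plan is straightforward: this corollary is an immediate consequence of two results already established earlier in the paper, and the proof consists of chaining them.

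First, I would invoke the identity \eqref{composition}, specialized to the indices $(\alpha,\beta) = (1,1)$. That identity shows that any composition of the form $\mathsf{P}_{b}^{(\alpha,0)} \circ \mathsf{P}_{d}^{(0,\beta)}$ collapses to a single paraproduct whose symbol is the Schur product $b \circ d$. Applied here, it yields the operator identity
\[
\mathsf{P}_{b}^{(1,0)} \circ \mathsf{P}_{d}^{(0,1)} \;=\; \mathsf{P}_{b\circ d}^{(1,1)},
\]
which in particular gives equality of operator norms on $L^2(\mathbb{R})$.

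Second, I would apply Corollary \ref{ClassicalCharacterization2} with the symbol $a := b\circ d$. That corollary precisely characterizes the norm of the $(1,1)$-type paraproduct in terms of the Carleson norm of its sweep together with the $\ell^\infty$ norm of the averaging sequence $E(a)$, giving
\[
\bigl\Vert \mathsf{P}_{b\circ d}^{(1,1)}\bigr\Vert_{L^2(\mathbb{R}) \to L^2(\mathbb{R})}
\approx \bigl\Vert \widehat{S}(b\circ d)\bigr\Vert_{CM} + \bigl\Vert E(b\circ d)\bigr\Vert_{\ell^\infty}.
\]
Combining the two displays produces the claimed equivalence.

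There is essentially no obstacle here: all of the analytic work was already carried out in the proof of Corollary \ref{ClassicalCharacterization2} (the Pott--Smith decomposition \eqref{PS decomp}, the Carleson embedding estimate for the $\widehat{S}(a)$-piece, and the Haar-testing lower bounds for both $E(a)$ and $\widehat{S}(a)$). The only thing the present corollary does is transport that characterization across the trivial algebraic identity \eqref{composition}. Accordingly, the proof can be written in a couple of lines by quoting the two prior results in sequence.
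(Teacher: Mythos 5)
Your proposal is correct and is exactly the paper's argument: the paper derives this corollary by combining identity \eqref{composition} (with $\alpha=\beta=1$, giving $\mathsf{P}_{b}^{(1,0)}\circ\mathsf{P}_{d}^{(0,1)}=\mathsf{P}_{b\circ d}^{(1,1)}$) with Corollary \ref{ClassicalCharacterization2} applied to $a=b\circ d$. Nothing further is needed.
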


It is clear that the above Corollary proves Theorem \ref{CET-Type}.

When the sequence $a=\{a_I\}_{I\in\mathcal{D}}$ is given by non-negative
terms, then we have the following estimate that will be useful as well. It
is proved simply by applying the Carleson Embedding Theorem.

\begin{proposition}
\label{CET_Prop} Let $a=\{a_I\}_{I\in\mathcal{D}}$ be a sequence of
non-negative numbers. Then 
\begin{equation}  \label{CET}
\left\Vert \mathsf{P}_{a}^{\left( 1,1\right) }\right\Vert _{L^{2}\left( 
\mathbb{R}\right) \rightarrow L^{2}\left( \mathbb{R}\right) }\lesssim
\left\|a^{\frac{1}{2}}\right\|_{CM}^2.
\end{equation}
\end{proposition}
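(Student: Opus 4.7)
The plan is to reduce \eqref{CET} directly to the Carleson Embedding Theorem that was already used (implicitly) in the proof of Lemma~\ref{ClassicalCharacterization}. The hypothesis that $a_I\geq 0$ is exactly what is needed to make $\sqrt{a_I}$ a genuine (non-negative) sequence, which is the feature of the statement.

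First, I would pass to the bilinear form. By duality,
\begin{equation*}
\left\Vert \mathsf{P}_a^{(1,1)}\right\Vert_{L^2(\mathbb{R})\to L^2(\mathbb{R})}
=\sup_{\|f\|_{L^2}=\|g\|_{L^2}=1}\left|\left\langle \mathsf{P}_a^{(1,1)}f,g\right\rangle_{L^2(\mathbb{R})}\right|
=\sup\left|\sum_{I\in\mathcal{D}} a_I \left\langle f,h_I^1\right\rangle_{L^2(\mathbb{R})}\overline{\left\langle g,h_I^1\right\rangle_{L^2(\mathbb{R})}}\right|.
\end{equation*}

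Next, using $a_I\geq 0$, write $a_I=\sqrt{a_I}\cdot\sqrt{a_I}$ and apply Cauchy--Schwarz in the index $I$:
\begin{equation*}
\left|\sum_{I\in\mathcal{D}} a_I \left\langle f,h_I^1\right\rangle \overline{\left\langle g,h_I^1\right\rangle}\right|
\leq \left(\sum_{I\in\mathcal{D}} a_I\left|\left\langle f,h_I^1\right\rangle\right|^2\right)^{\frac12}
\left(\sum_{I\in\mathcal{D}} a_I\left|\left\langle g,h_I^1\right\rangle\right|^2\right)^{\frac12}.
\end{equation*}

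Finally, I would apply the Carleson Embedding Theorem to the sequence $\{\sqrt{a_I}\}_{I\in\mathcal{D}}$, noting that this is precisely the estimate used to prove \eqref{CM} in Lemma~\ref{ClassicalCharacterization}: for any $h\in L^2(\mathbb{R})$,
\begin{equation*}
\sum_{I\in\mathcal{D}} a_I \left|\left\langle h,h_I^1\right\rangle\right|^2
=\sum_{I\in\mathcal{D}}\left|\sqrt{a_I}\right|^2\left|\left\langle h,h_I^1\right\rangle\right|^2
\lesssim \left\|\sqrt{a}\right\|_{CM}^{2}\|h\|_{L^2(\mathbb{R})}^{2}.
\end{equation*}
Applying this to both $f$ and $g$ and taking the supremum gives \eqref{CET}.

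There is no real obstacle here; the only point to keep straight is that one must invoke the Carleson Embedding Theorem with the half-power sequence $\sqrt{a}$ (which requires $a_I\geq 0$), since the norm appearing on the right of \eqref{CET} is $\|\sqrt{a}\|_{CM}^{2}=\sup_{I\in\mathcal{D}}\frac{1}{|I|}\sum_{J\subset I}a_J$ rather than $\|a\|_{CM}^2$.
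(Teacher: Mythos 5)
Your proposal is correct and follows essentially the same route as the paper's proof: pass to the bilinear form, use the non-negativity of $a_I$ to apply Cauchy--Schwarz in the index $I$, and then bound each factor by the Carleson Embedding Theorem (equivalently, by \eqref{CM} of Lemma~\ref{ClassicalCharacterization} applied to the sequence $\sqrt{a}$), finishing with the identity $\left\|a^{\frac12}\right\|_{CM}^2=\sup_{I\in\mathcal{D}}\frac{1}{|I|}\sum_{J\subset I}a_J$. No gaps.
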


\begin{proof}
Let $f,g\in L^2(\mathbb{R})$. Then we have 
\begin{eqnarray*}
\left\vert \left\langle \mathsf{P}_{a}^{(1,1)} f,g\right\rangle\right\vert &
= & \left\vert\sum_{I\in\mathcal{D}} a_I \left\langle
f,h_I^1\right\rangle_{L^2(\mathbb{R})} \left\langle
g,h_I^1\right\rangle_{L^2(\mathbb{R})}\right\vert \\
& \leq & \sum_{I\in\mathcal{D}} a_I \left\vert\left\langle
f,h_I^1\right\rangle_{L^2(\mathbb{R})} \left\langle
g,h_I^1\right\rangle_{L^2(\mathbb{R})}\right\vert \\
& \leq & \left(\sum_{I\in\mathcal{D}} a_I \left\vert\left\langle
f,h_I^1\right\rangle_{L^2(\mathbb{R})} \right\vert^2\right)^{\frac{1}{2}%
}\left(\sum_{I\in\mathcal{D}} a_I \left\vert\left\langle
g,h_I^1\right\rangle_{L^2(\mathbb{R})} \right\vert^2\right)^{\frac{1}{2}}.
\end{eqnarray*}
Now apply Lemma \ref{ClassicalCharacterization} and \eqref{CM} to see that 
\begin{equation*}
\left(\sum_{I\in\mathcal{D}} a_I \left\vert\left\langle
f,h_I^1\right\rangle_{L^2(\mathbb{R})} \right\vert^2\right)^{\frac{1}{2}%
}\lesssim \left\{ \sup_{I\in\mathcal{D}}\frac{1}{\left\vert I\right\vert }%
\sum_{J\subset I} a_{J}\right\}^{\frac{1}{2}}\left\Vert f\right\Vert_{L^2(%
\mathbb{R})},
\end{equation*}
and so we have 
\begin{equation*}
\left\vert \left\langle \mathsf{P}_{a}^{(1,1)} f,g\right\rangle\right\vert =
\left\vert\sum_{I\in\mathcal{D}} a_I \left\langle f,h_I^1\right\rangle_{L^2(%
\mathbb{R})} \left\langle g,h_I^1\right\rangle_{L^2(\mathbb{R}%
)}\right\vert\lesssim \left\{ \sup_{I\in\mathcal{D}}\frac{1}{\left\vert
I\right\vert }\sum_{J\subset I} a_{J}\right\}\left\Vert f\right\Vert_{L^2(%
\mathbb{R})}\left\Vert g\right\Vert_{L^2(\mathbb{R})}
\end{equation*}
However, 
\begin{equation*}
\left\|a^{\frac{1}{2}}\right\|_{CM}^2=\sup_{I\in\mathcal{D}}\frac{1}{%
\left\vert I\right\vert }\sum_{J\subset I} a_{J},
\end{equation*}
and so the Proposition follows.
\end{proof}

\section{Transplantation}

\label{DifficultCharacters}

We have%
\begin{equation*}
\mathsf{P}_{b}^{\left( \alpha ,\beta \right) }h_{I}=\sum_{K\in \mathcal{D}%
}b_{K}\left\langle h_{I},h_{K}^{\beta }\right\rangle_{L^2(\mathbb{R})}
h_{K}^{\alpha }=\left\{ 
\begin{array}{ccl}
b_{I}\,h_{I} & \text{ if } & \left( \alpha ,\beta \right) =\left( 0,0\right)
\\ 
b_{I}\,h_{I}^{1} & \text{ if } & \left( \alpha ,\beta \right) =\left(
1,0\right) \\ 
\sum_{K\varsubsetneqq I}b_{K}\,\widehat{h_{K}^{1}}\left( I\right)\, h_{K} & 
\text{ if } & \left( \alpha ,\beta \right) =\left( 0,1\right) \\ 
\sum_{K\varsubsetneqq I}b_{K}\,\widehat{h_{K}^{1}}\,\left( I\right) h_{K} & 
\text{ if } & \left( \alpha ,\beta \right) =\left( 1,1\right).%
\end{array}%
\right.
\end{equation*}%
From these formulas we can compute the Gram matrices of the composition of
paraproducts. We will then choose an appropriate representation of Hilbert
space on which to analyze a given Gram matrix. It is the simplicity of these
formulas when $\beta =0$ that accounts for our success in characterizing
boundedness of products with type $\left( 0,\beta ,\gamma ,0\right) $.

At this point we also set forth some notation that will be used through out
the remainder of this section.  For the dyadic grid $\mathcal{D}$ we let $\ell^2(\mathcal{D})$ denote the standard space of square integrable sequences indexed by the dyadic intervals.  For a weight function $\omega:\mathcal{D}\to\mathbb{R}_+$ we let $\ell^2(\omega)$ denote the sequences $\{a_I\}_{I\in\mathcal{D}}$ for which
$$
\sum_{I\in\mathcal{D}} \omega(I) \left\vert a_I\right\vert^2<\infty.
$$

Recall now that we can identify the dyadic
grid $\mathcal{D}$ on the real line with the standard Bergman tree of
Carleson tiles on the upper plane by associating each $I\in \mathcal{D}$
with the Carleson tile 
\begin{equation*}
T\left( I\right) \equiv I\times \left[ \frac{\left\vert I\right\vert }{2}%
,\left\vert I\right\vert \right].
\end{equation*}
Also set 
\begin{equation*}
Q\left( I\right) \equiv I\times \left[ 0,\left\vert I\right\vert \right]%
=\bigcup\limits_{J\subset I}T\left( J\right);
\end{equation*}
which is the Carleson square associated with $I\in \mathcal{D}$.

Let $\mathcal{H}$ denote the upper half plane, and so in particular we see
that $\mathcal{H}=\bigcup_{I\in\mathcal{D}} T\left (I \right)$. We will let $%
L^2(\mathcal{H})$ denote the standard $L^2$ space on the upper half plane,
and for a non-negative function $\sigma$ we will let $L^2(\mathcal{H}%
;\sigma) $ denote the functions that are square integrable with respect to $%
\sigma \,dA$, i.e, 
\begin{equation*}
\left\| f\right\|_{L^2\left(\mathcal{H}\right)}^2\equiv\int_{\mathcal{H}%
}\left\vert f(z)\right\vert^2 \,dA(z)\quad\mathnormal{ and } \quad \left\|
f\right\|_{L^2\left(\mathcal{H};\sigma\right)}^2\equiv\int_{\mathcal{H}%
}\left\vert f(z)\right\vert^2 \sigma(z)\,dA(z).
\end{equation*}

Now consider the Hilbert subspace $L^2_c\left(\mathcal{H}\right)$ which
denotes the set of functions that are square integrable on $\mathcal{H}$,
but are constant on tiles. Namely, $f:\mathcal{D}\rightarrow\mathbb{C}$ and
can be represented as 
\begin{equation*}
f=\sum_{I\in\mathcal{D}} f_I \mathbf{1}_{T(I)}.
\end{equation*}
Then we have that 
\begin{equation*}
L^2_c\left(\mathcal{H}\right)\equiv \left\{ f:\mathcal{D}\rightarrow \mathbb{%
C} :\sum_{I\in \mathcal{D}}\left\vert f\left( I\right) \right\vert
^{2}\left\vert I\right\vert ^{2}<\infty\right\} ,
\end{equation*}
with norm $\left\Vert f\right\Vert _{L^2_c\left(\mathcal{H}\right)}=\sqrt{%
\frac{1}{2}\sum_{I\in \mathcal{D}}\left\vert f\left( I\right) \right\vert
^{2}\left\vert I\right\vert ^{2}}$.

For $f\in L^2\left(\mathcal{H}\right)$, let $\widetilde{f}=\frac{f}{%
\left\Vert f\right\Vert _{L^2\left(\mathcal{H}\right)}}$ denote the
normalized function. Then it is immediate that $\left\{ \widetilde{\mathbf{1}%
}_{T\left( I\right) }\right\} _{I\in \mathcal{D}}$ is an orthonormal basis
of $L^2_c\left(\mathcal{H}\right)$ and easy to see that $\left\{ \widetilde{%
\mathbf{1}}_{Q\left( I\right) }\right\} _{I\in \mathcal{D}}$ is a Riesz
basis of $L^2_c\left(\mathcal{H}\right)$.

For $\lambda\in\mathbb{R}$ and $a\equiv\{a_I\}_{I\in\mathcal{D}}$ the
multiplication operator $\mathcal{M}_{a}^{\lambda }$ is defined on basis
elements $\widetilde{\mathbf{1}}_{T\left( K\right) }$ by%
\begin{equation*}
\mathcal{M}_{a}^{\lambda }\widetilde{\mathbf{1}}_{T\left(
K\right)}=a_{K}\left\vert K\right\vert ^{\lambda }\widetilde{\mathbf{1}}%
_{T\left( K\right) }.
\end{equation*}
Note that $\mathcal{M}_{a}^{-1}$ is \emph{not} the inverse of $\mathcal{M}%
_{a}$!. We will also let $\overline{b}\equiv\left\{\overline{b_{I}}%
\right\}_{I\in\mathcal{D}}$.

Recall that in \eqref{Project_Fcn} for an interval $I\in\mathcal{D}$ and a
function $f\in L^2(\mathbb{R})$ we let 
\begin{equation*}
\mathsf{Q}_I f\equiv\sum_{J\subset I} \left\langle f,h_J\right\rangle_{L^2(%
\mathbb{R})} h_J
\end{equation*}
denote the projection of the function $f$ onto the span of the Haar
functions supported within the interval $I$. For sequences $%
a\equiv\{a_I\}_{I\in\mathcal{D}}$ in \eqref{Project_Seq} the operator $%
\mathsf{Q}_I$ takes the following form: 
\begin{equation*}
\mathsf{Q}_I a\equiv\sum_{J\subset I} a_J h_J.
\end{equation*}


We now study each remaining composition type in turn.

\subsection{Type (0,\,1,\,1,\,0) Compositions}

\label{0110-type}

The Gram matrix $\mathfrak{G}_{\mathsf{P}_{b}^{\left( 0,1\right) }\circ 
\mathsf{P}_{d}^{\left( 1,0\right) }}=\left[ G_{I,J}\right] _{I,J\in \mathcal{%
D}}$ of the operator $\mathsf{P}_{b}^{\left( 0,1\right) }\circ \mathsf{P}%
_{d}^{\left( 1,0\right) }$ relative to the Haar basis $\left\{ h_{I}\right\}
_{I\in \mathcal{D}}$ has entries 
\begin{eqnarray*}
G_{I,J} &=&\left\langle \mathsf{P}_{b}^{\left( 0,1\right) }\circ \mathsf{P}%
_{d}^{\left( 1,0\right) }h_{J},h_{I}\right\rangle_{L^2(\mathbb{R})}
=\left\langle \mathsf{P}_{d}^{\left( 1,0\right) }h_{J},\mathsf{P}%
_{b}^{\left( 1,0\right) }h_{I}\right\rangle_{L^2(\mathbb{R})} \\
&=&\left\langle d_{J}\,h_{J}^{1},b_{I}\,h_{I}^{1}\right\rangle_{L^2(\mathbb{R%
})} \\
&=&\overline{b_{I}}d_{J}\frac{\left\vert I\cap J\right\vert }{\left\vert
I\right\vert \left\vert J\right\vert }=\left\{ 
\begin{array}{ccc}
\overline{b_{I}}d_{J}\frac{1}{\left\vert I\right\vert } & \text{ if } & 
J\subset I \\ 
\overline{b_{I}}d_{J}\frac{1}{\left\vert J\right\vert } & \text{ if } & 
I\subset J \\ 
0 & \text{ if } & I\cap J=\emptyset.%
\end{array}%
\right.
\end{eqnarray*}
Define an operator $\mathsf{T}_{b,d}^{\left( 0,1,1,0\right) }$ on $%
L^2_c\left(\mathcal{H}\right)$\ by%
\begin{equation*}
\mathsf{T}_{b,d}^{\left( 0,1,1,0\right) }\equiv \mathcal{M}_{\overline{b}%
}^{0}\left( \sum_{K\in \mathcal{D}}\widetilde{\mathbf{1}}_{T\left( K\right)
}\otimes \widetilde{\mathbf{1}}_{Q\left( K\right) }\right) \mathcal{M}%
_{d}^{-1}.
\end{equation*}%
Then the Gram matrix $\mathfrak{G}_{\mathsf{T}_{b,d}^{\left( 0,1,1,0\right)
}}=\left[ G_{I,J}\right] _{I,J\in \mathcal{D}}$ of $\mathsf{T}_{b,d}^{\left(
0,1,1,0\right) }$ relative to the basis $\left\{ \widetilde{\mathbf{1}}%
_{T\left( I\right) }\right\} _{I\in \mathcal{D}}$ has entries%
\begin{eqnarray*}
G_{I,J} &=&\left\langle \mathsf{T}_{b,d}^{\left( 0,1,1,0\right) }\widetilde{%
\mathbf{1}}_{T\left( J\right) },\widetilde{\mathbf{1}}_{T\left( I\right)
}\right\rangle_{L^2\left(\mathcal{H}\right)} \\
&=&\left\langle \mathcal{M}_{\overline{b}}\left( \sum_{K\in \mathcal{D}}%
\widetilde{\mathbf{1}}_{T\left( K\right) }\otimes \widetilde{\mathbf{1}}%
_{Q\left( K\right) }\right) \mathcal{M}_{d}^{-1}\widetilde{\mathbf{1}}%
_{T\left( J\right) },\widetilde{\mathbf{1}}_{T\left( I\right)
}\right\rangle_{L^2\left(\mathcal{H}\right)} \\
&=&\sum_{K\in \mathcal{D}}\left\langle \left\langle \widetilde{\mathbf{1}}%
_{Q\left( K\right) },\mathcal{M}_{d}^{-1}\widetilde{\mathbf{1}}_{T\left(
J\right) }\right\rangle_{L^2\left(\mathcal{H}\right)} \mathcal{M}_{\overline{%
b}}\widetilde{\mathbf{1}}_{T\left( K\right) }, \widetilde{\mathbf{1}}%
_{T\left( I\right) }\right\rangle_{L^2\left(\mathcal{H}\right)} \\
&=&\sum_{K\in \mathcal{D}}\overline{b_{K}}d_{J}\left\vert
J\right\vert^{-1}\left\langle \widetilde{\mathbf{1}}_{Q\left( K\right) },%
\widetilde{\mathbf{1}}_{T\left( J\right) }\right\rangle_{L^2\left(\mathcal{H}%
\right)} \left\langle \widetilde{\mathbf{1}}_{T\left( K\right) },\widetilde{%
\mathbf{1}}_{T\left( I\right) }\right\rangle_{L^2\left(\mathcal{H}\right)} \\
&=&\overline{b_{I}}d_{J}\left\vert J\right\vert^{-1}\left\langle \widetilde{%
\mathbf{1}}_{Q\left( I\right) },\widetilde{\mathbf{1}}_{T\left( J\right)
}\right\rangle_{L^2\left(\mathcal{H}\right)} \left\langle \widetilde{\mathbf{%
1}}_{T\left( I\right) },\widetilde{\mathbf{1}}_{T\left( I\right)
}\right\rangle_{L^2\left(\mathcal{H}\right)} \\
&=&\overline{b_{I}}d_{J}\sqrt{2}\frac{\left\vert Q\left( I\right) \cap
T\left( J\right) \right\vert }{\left\vert I\right\vert\left\vert
J\right\vert^2 }=\frac{1}{\sqrt{2}}\left\{ 
\begin{array}{ccc}
\overline{b_{I}}d_{J}\frac{1}{\left\vert I\right\vert } & \text{ if } & 
J\subset I \\ 
0 & \text{ if } & J\not\subset I.%
\end{array}%
\right.
\end{eqnarray*}%
Thus, up to an absolute constant, $\mathfrak{G}_{\mathsf{T}_{b,d}^{\left(
0,1,1,0\right) }}$ matches $\mathfrak{G}_{\mathsf{P}_{b}^{\left( 0,1\right)
}\circ \mathsf{P}_{d}^{\left( 1,0\right) }}$ in the lower triangle where $%
J\subset I$.

By the above computations we have 
\begin{equation*}
\left\Vert \mathsf{P}_{b}^{\left( 0,1\right) }\circ \mathsf{P}%
_{d}^{\left(1,0\right) }\right\Vert _{L^{2}\left( \mathbb{R}\right)
\rightarrow L^{2}\left( \mathbb{R}\right) }\leq \left\Vert \mathsf{T}%
_{b,d}^{\left(0,1,1,0\right) }\right\Vert _{L^2\left(\mathcal{H}%
\right)\rightarrow L^2\left(\mathcal{H}\right)}+\left\Vert \mathsf{T}%
_{d,b}^{\left( 0,1,1,0\right) }\right\Vert _{L^2\left(\mathcal{H}%
\right)\rightarrow L^2\left(\mathcal{H}\right)}
\end{equation*}
and we will further show below that 
\begin{eqnarray*}
\left\Vert \mathsf{T}_{b,d}^{\left(0,1,1,0\right) }\right\Vert _{L^2\left(%
\mathcal{H}\right)\rightarrow L^2\left(\mathcal{H}\right)} & \approx & C_1 \\
\left\Vert \mathsf{T}_{d,b}^{\left( 0,1,1,0\right) }\right\Vert _{L^2\left(%
\mathcal{H}\right)\rightarrow L^2\left(\mathcal{H}\right)} & \approx & C_2
\end{eqnarray*}
with $C_1$ and $C_2$ the best constants in the testing inequality. However,
for each of these constants we have 
\begin{equation*}
C_j\leq \left\Vert \mathsf{P}_{b}^{\left( 0,1\right) }\circ \mathsf{P}%
_{d}^{\left(1,0\right) }\right\Vert _{L^{2}\left( \mathbb{R}\right)
\rightarrow L^{2}\left( \mathbb{R}\right)},
\end{equation*}
see the argument just after \eqref{par test}, and so we obtain 
\begin{equation*}
\left\Vert \mathsf{P}_{b}^{\left( 0,1\right) }\circ \mathsf{P}_{d}^{\left(
1,0\right) }\right\Vert _{L^{2}\left( \mathbb{R}\right) \rightarrow
L^{2}\left( \mathbb{R}\right) }\approx \left\Vert \mathsf{T}_{b,d}^{\left(
0,1,1,0\right) }\right\Vert _{L^2\left(\mathcal{H}\right)\rightarrow
L^2\left(\mathcal{H}\right)}+\left\Vert \mathsf{T}_{d,b}^{\left(
0,1,1,0\right) }\right\Vert _{L^2\left(\mathcal{H}\right)\rightarrow
L^2\left(\mathcal{H}\right)}\ .
\end{equation*}

Now the operator norm $\left\Vert \mathsf{T}_{b,d}^{\left( 0,1,1,0\right)
}\right\Vert _{L^{2}\left( \mathcal{H}\right) \rightarrow L^{2}\left( 
\mathcal{H}\right) }$ equals the best constant in a certain two weight
inequality for the positive operator $\mathsf{U}$ on $L_{c}^{2}\left( 
\mathcal{H}\right) $, where%
\begin{equation*}
\mathsf{U}\equiv \sum_{K\in \mathcal{D}}\widetilde{\mathbf{1}}_{T\left(
K\right) }\otimes \widetilde{\mathbf{1}}_{Q\left( K\right) }.
\end{equation*}%
The inequality we wish to characterize is%
\begin{equation}
\left\Vert \mathcal{M}_{\overline{b}}^{0}\mathsf{U}\mathcal{M}%
_{d}^{-1}f\right\Vert _{L_{c}^{2}\left( \mathcal{H}\right) }=\left\Vert 
\mathsf{T}_{b,d}^{\left( 0,1,1,0\right) }f\right\Vert _{L_{c}^{2}\left( 
\mathcal{H}\right) }\lesssim \left\Vert f\right\Vert _{L_{c}^{2}\left( 
\mathcal{H}\right) },  \label{to be char1}
\end{equation}%
which we first recast in the language of trees as in \cite{ArRoSa}. To do this, we suppose that $f$ is constant on tiles $T\left(
K\right) $ in the upper half space, and view $f$ as the sequence $f:\mathcal{%
D}\longrightarrow \mathbb{C}$ given by its averages%
\begin{equation*}
f\left( K\right) \equiv \left\langle \frac{1}{\left\vert T\left( K\right)
\right\vert }\mathbf{1}_{T\left( K\right) },f\right\rangle_{L^2(\mathcal{H})} .
\end{equation*}%
Define the adjoint tree integral $\mathcal{I}^{\ast }f$ by 
\begin{equation*}
\mathcal{I}^{\ast }f\left( K\right) \equiv\sum\limits_{L\in \mathcal{D}:\
L\subset K}f\left( L\right) ,\ \ \ \ \ K\in \mathcal{D},
\end{equation*}%
and define the special weight sequence $s\left( K\right) \equiv \left\vert
K\right\vert $, $K\in \mathcal{D}$. Then for $f$ constant on tiles $T\left(
K\right) $ in the upper half space we have%
\begin{eqnarray*}
\mathsf{U}f &=&\sum_{K\in \mathcal{D}}\widetilde{\mathbf{1}}_{T\left(
K\right) }\otimes \widetilde{\mathbf{1}}_{Q\left( K\right) }f=\sum_{K\in 
\mathcal{D}}\left\langle \widetilde{\mathbf{1}}_{Q\left( K\right)
},f\right\rangle_{L^2(\mathcal{H})} \widetilde{\mathbf{1}}_{T\left( K\right) } \\
&=&\sum_{K\in \mathcal{D}}\left\langle \frac{1}{\sqrt{\left\vert Q\left(
K\right) \right\vert }}\sum\limits_{L\subset K}\mathbf{1}_{T\left( L\right)
},f\right\rangle_{L^2(\mathcal{H})} \frac{1}{\sqrt{\left\vert T\left( K\right) \right\vert }}%
\mathbf{1}_{T\left( K\right) } \\
&=&\sum_{K\in \mathcal{D}}\frac{1}{\sqrt{\left\vert Q\left( K\right)
\right\vert }}\sum\limits_{L\subset K}\left\vert T\left( L\right)
\right\vert \left\langle \frac{1}{\left\vert T\left( L\right) \right\vert }%
\mathbf{1}_{T\left( L\right) },f\right\rangle_{L^2(\mathcal{H})} \frac{1}{\sqrt{\left\vert
T\left( K\right) \right\vert }}\mathbf{1}_{T\left( K\right) } \\
&=&\frac{1}{\sqrt{2}}\sum_{K\in \mathcal{D}}\left\{ \sum\limits_{L\subset K}%
\frac{1}{2}s\left( L\right) ^{2}f\left( L\right) \right\} \frac{1}{\frac{1}{2%
}\left\vert K\right\vert ^{2}}\mathbf{1}_{T\left( K\right) } \\
&=&\sqrt{2}\sum_{K\in \mathcal{D}}\mathcal{I}^{\ast }\left( s^{2}f\right)
\left( K\right) \frac{1}{s\left( K\right) ^{2}}\mathbf{1}_{T\left( K\right)
}\ ,
\end{eqnarray*}%
which shows that%
\begin{equation*}
\left( \mathsf{U}f\right) \left( K\right) =\sqrt{2}\frac{1}{s\left( K\right)
^{2}}\mathcal{I}^{\ast }\left( s^{2}f\right) \left( K\right) .
\end{equation*}%
Since $\mathcal{M}_{d}^{-1}$ and $\mathcal{M}_{\overline{b}}^{0}$ are
multiplication by $\frac{d_{K}}{\left\vert K\right\vert }=\frac{d_{K}}{%
s\left( K\right) }$ and $\overline{b_{K}}$ respectively on the tile $T\left(
K\right) $, which for convenience we abbreviate as $\frac{d}{s}$ and $%
\overline{b}$ respectively, we see that the two weight inequality (\ref{to
be char1}) is equivalent to%
\begin{equation}
\left\Vert s\left\vert \overline{b}\right\vert \frac{1}{s^{2}}\mathcal{I}%
^{\ast }\left( s^{2}\frac{\left\vert d\right\vert }{s}f\right) \right\Vert
_{\ell ^{2}\left( \mathcal{D}\right) }\lesssim \left\Vert sf\right\Vert
_{\ell ^{2}\left( \mathcal{D}\right) }.  \label{to be char1'}
\end{equation}%
Now if we set 
\begin{eqnarray*}
s\left\vert d\right\vert f &=&g\omega , \\
\left( sf\right) ^{2} &=&g^{2}\omega , \\
\left( \frac{\left\vert b\right\vert }{s}\right) ^{2} &=&\sigma ,
\end{eqnarray*}%
then 
\begin{equation*}
\omega =\frac{\left( g\omega \right) ^{2}}{g^{2}\omega }=\frac{\left(
s\left\vert d\right\vert f\right) ^{2}}{\left( sf\right) ^{2}}=d^{2},
\end{equation*}%
and (\ref{to be char1'}) is equivalent to%
\begin{equation}
\left\Vert \mathcal{I}^{\ast }\left( g\omega \right) \right\Vert _{\ell
^{2}\left( \sigma \right) }\leq C\left\Vert g\right\Vert _{\ell ^{2}\left(
\omega \right) }.  \label{to be char1''}
\end{equation}

At this point we can apply the characterization of the two weight tree
inequality in \cite{ArRoSa}. Now $\mathcal{D}$ is a rootless
tree, and the inequality in \cite{ArRoSa} is stated for a rooted tree, but the
monotone convergence theorem immediately extends the characterization in
\cite{ArRoSa} to rootless trees as well. Thus the best constant $C$ in (\ref{to
be char1''}) is comparable to the best constant $C_{1}$ in the corresponding
truncated testing condition with $g=\mathbf{1}_{\left\{ L\in D:\ L\subset
I\right\} }$ for $I\in \mathcal{D}$: 
\begin{eqnarray*}
\sum_{J\subset I}\left( \sum_{L\subset J}\omega \left( L\right) \right)
^{2}\sigma \left( J\right)  &=&\sum_{J\subset I}\left[ \mathcal{I}^{\ast
}\omega \left( J\right) \right] ^{2}\sigma \left( J\right) \leq \left\Vert 
\mathcal{I}^{\ast }\left( g\omega \right) \right\Vert _{\ell ^{2}\left(
\sigma \right) }^{2} \\
&\leq &C_{1}^{2}\left\Vert g\right\Vert _{\ell ^{2}\left( \omega \right)
}=C_{1}^{2}\mathcal{I}^{\ast }\omega \left( I\right)
=C_{1}^{2}\sum_{L\subset I}\omega \left( L\right) ,\ \ \ \ \ I\in \mathcal{D}%
,
\end{eqnarray*}%
i.e.%
\begin{equation}
\sum_{J\subset I}\left( \sum_{L\subset J}\left\vert d_{L}\right\vert
^{2}\right) ^{2}\frac{\left\vert b_{J}\right\vert ^{2}}{\left\vert
J\right\vert ^{2}}\leq C_{1}^{2}\sum_{L\subset K}\left\vert d_{L}\right\vert
^{2},\ \ \ \ \ K\in \mathcal{D}.  \label{testing condition}
\end{equation}

It is now convenient to relabel our weights by introducing the \emph{%
different} notation,

\begin{eqnarray*}
w &\equiv &\sum_{I\in \mathcal{D}}\left\vert b_{I}\right\vert ^{2}\mathbf{1}%
_{T\left( I\right) } \\
\sigma  &\equiv &\sum_{I\in \mathcal{D}}\frac{\left\vert d_{I}\right\vert
^{2}}{\left\vert I\right\vert ^{2}}\mathbf{1}_{T\left( I\right) },
\end{eqnarray*}%
in which the testing condition (\ref{testing condition}) is equivalent to 
\begin{equation}
\left\Vert \mathbf{1}_{Q(I)}\mathsf{U}\left( \sigma \mathbf{1}_{Q(I)}\right)
\right\Vert _{L^{2}\left( \mathcal{H};w\right) }^{2}\leq C_{1}^{2}\left\Vert 
\mathbf{1}_{Q(I)}\right\Vert _{L^{2}\left( \mathcal{H};\sigma \right) }^{2},
\label{postive_testing}
\end{equation}%
since 
\begin{eqnarray*}
\left\Vert \mathbf{1}_{Q(I)}\mathsf{U}\left( \sigma \mathbf{1}_{Q(I)}\right)
\right\Vert _{L^{2}\left( \mathcal{H};w\right) }^{2} &=&2\left\Vert
\sum_{J\subset I}\frac{\left\langle \sigma \mathbf{1}_{Q(I)},\widetilde{%
\mathbf{1}}_{Q(J)}\right\rangle _{L^{2}\left( \mathcal{H}\right) }}{%
\left\vert J\right\vert }\mathbf{1}_{T(J)}\right\Vert _{L^{2}\left( \mathcal{%
H};w\right) }^{2} \\
&=&2\sum_{J\subset I}\left\vert b_{I}\right\vert ^{2}\left( \left\langle
\sigma \mathbf{1}_{Q(I)},\widetilde{\mathbf{1}}_{Q(J)}\right\rangle
_{L^{2}\left( \mathcal{H}\right) }\right) ^{2} \\
&=&2\sum_{J\subset I}\left\vert b_{I}\right\vert ^{2}\frac{1}{\left\vert
J\right\vert ^{2}}\left( \sum_{L\in \mathcal{D}}\frac{\left\vert
d_{L}\right\vert ^{2}}{\left\vert L\right\vert ^{2}}\int_{\mathcal{H}}%
\mathbf{1}_{T(L)}\mathbf{1}_{Q(I)}\mathbf{1}_{Q(J)}\,dA\right) ^{2} \\
&=&\frac{1}{2}\sum_{J\subset I}\left\vert b_{I}\right\vert ^{2}\frac{1}{%
\left\vert J\right\vert ^{2}}\left( \sum_{L\subset J}\left\vert
d_{L}\right\vert ^{2}\right) ^{2}.
\end{eqnarray*}

and

\begin{equation}  \label{e.rhs}
\left\Vert \mathsf{Q}_{I}d\right\Vert _{L^{2}(\mathbb{R})}^{2}=\sum_{L%
\subset I}\left\vert d_{L}\right\vert^{2}=\left\Vert
1_{Q(I)}\right\Vert_{L^2\left(\mathcal{H};\sigma\right)}^2.
\end{equation}

The testing condition \eqref{postive_testing} thus gives a characterization
of the boundedness of the paraproduct composition $\mathsf{P}_{b}^{\left(
0,1\right) }\circ \mathsf{P}_{d}^{\left( 1,0\right) }$ on $L^{2}\left( 
\mathbb{R}\right) $. However, we now want to rephrase this as a testing
condition, but only on the operator $\mathsf{P}_{b}^{\left( 0,1\right)
}\circ \mathsf{P}_{d}^{\left( 1,0\right) }$.


Let $V:L^2\left(\mathbb{R}\right)\rightarrow L^2_{c}\left(\mathcal{H}\right)$
be the unitary operator defined on basis elements by 
\begin{equation}  \label{e.changeofbase}
V h_I=\widetilde{\mathbf{1}}_{T(I)}.
\end{equation}
Using these unitary operators we can write

\begin{eqnarray*}
\left\Vert \mathsf{Q}_I \mathsf{P}_{b}^{(0,1)}\mathsf{P}_{d}^{(1,0)} \mathsf{%
Q}_I \overline{d} \right\Vert_{L^2(\mathbb{R})} & = & \left\Vert \mathsf{Q}%
_I V^\ast \left(T^{(0,1,1,0)}_{b,d}\right)V\mathsf{Q}_I \overline{d}
\right\Vert_{L^2(\mathbb{R})} \\
& = & \left\Vert \mathsf{Q}_I V^\ast \left(T^{(0,1,1,0)}_{b,d}\right)
\left(\sum_{J\subset I} \overline{d_J}\, \widetilde{\mathbf{1}}%
_{T(J)}\right) \right\Vert_{L^2(\mathbb{R})} \\
& = & \left\Vert\mathsf{Q}_I V^\ast \mathcal{M}_{\overline{b}}^{0}\mathsf{U}%
\mathcal{M}_{d}^{-1}\left(\sum_{J\subset I} \overline{d_J}\, \widetilde{%
\mathbf{1}}_{T(J)}\right) \right\Vert_{L^2(\mathbb{R})} \\
& = & \left\Vert \mathsf{Q}_I V^\ast \mathcal{M}_{\overline{b}}^{0}\mathsf{U}
\left( \sigma\mathbf{1}_{Q(I)}\right) \right\Vert_{L^2(\mathbb{R})} \\
& = & \left\Vert V^\ast \mathbf{1}_{Q(I)} \mathcal{M}_{\overline{b}}^{0}%
\mathsf{U}\left( \sigma\mathbf{1}_{Q(I)}\right) \right\Vert_{L^2(\mathbb{R})}
\\
& = & \left\Vert \mathbf{1}_{Q(I)} \mathcal{M}_{\overline{b}}^{0}\mathsf{U}%
\left( \sigma\mathbf{1}_{Q(I)}\right) \right\Vert_{L^2_c(\mathcal{H})} \\
& = & \left\Vert \mathbf{1}_{Q(I)} \mathsf{U}\left( \sigma\mathbf{1}%
_{Q(I)}\right) \right\Vert_{L^2_c(\mathcal{H};w)}.
\end{eqnarray*}

Finally, using \eqref{e.rhs} one sees that \eqref{postive_testing} is
equivalent to a simple testing condition on the composition $\mathsf{P}%
_{b}^{\left( 0,1\right) }\circ \mathsf{P}_{d}^{\left( 1,0\right) }$:%
\begin{equation}
\left\Vert \mathsf{Q}_I \mathsf{P}_{b}^{(0,1)}\mathsf{P}_{d}^{(1,0)} \mathsf{%
Q}_I \overline{d} \right\Vert_{L^2(\mathbb{R})} \lesssim \left\Vert \mathsf{Q%
}_{I}d\right\Vert _{L^{2}(\mathbb{R})}^{2}.  \label{par test}
\end{equation}

Furthermore, it is immediate to see that \eqref{par test} is implied by the
boundedness of $\mathsf{P}_{b}^{\left( 0,1\right) }\circ \mathsf{P}%
_{d}^{\left( 1,0\right) }$ on $L^2\left(\mathbb{R}\right)$. Interchanging
the roles of $b$ and $d$ we have the following Theorem that characterizes
the boundedness of $\mathsf{P}_{b}^{\left( 0,1\right) }\circ \mathsf{P}%
_{d}^{\left( 1,0\right) }$, which is just a restatement of Theorem \ref%
{Theorem_Positive}.

\begin{theorem}
The composition $\mathsf{P}_{b}^{\left( 0,1\right) }\circ \mathsf{P}%
_{d}^{\left( 1,0\right) }$ is bounded on $L^{2}\left( \mathbb{R}\right) $ if
and only if both%
\begin{eqnarray*}
\left\Vert \mathsf{Q}_{I}\mathsf{P}_{b}^{\left( 0,1\right) }\mathsf{P}%
_{d}^{\left( 1,0\right) }\left( \mathsf{Q}_{I}\overline{d}\right)
\right\Vert _{L^{2}(\mathbb{R})}^{2} & \leq & C_1^2 \left\Vert \mathsf{Q}%
_{I}d\right\Vert _{L^{2}(\mathbb{R})}^{2}; \\
\left\Vert \mathsf{Q}_{I}\mathsf{P}_{d}^{\left( 0,1\right) }\mathsf{P}%
_{b}^{\left( 1,0\right) }\left( \mathsf{Q}_{I}\overline{b}\right)
\right\Vert _{L^{2}(\mathbb{R})}^{2} & \leq & C_2^2\left\Vert \mathsf{Q}%
_{I}b\right\Vert _{L^{2}(\mathbb{R})}^{2}
\end{eqnarray*}
for all $I\in\mathcal{D}$; i.e. for all $I\in\mathcal{D}$ the following
inequalities are true 
\begin{eqnarray*}
\sum_{J\subset I}\left\vert b_{J}\right\vert^{2}\frac{1}{\left\vert
J\right\vert ^{2}}\left( \sum_{L\subset J}\left\vert
d_{L}\right\vert^{2}\right) ^{2} &\leq & C_1^2\sum_{L\subset I}\left\vert
d_{L}\right\vert^{2}; \\
\sum_{J\subset I}\left\vert d_{J}\right\vert^{2}\frac{1}{\left\vert
J\right\vert ^{2}}\left( \sum_{L\subset J}\left\vert
b_{L}\right\vert^{2}\right) ^{2} & \leq & C_2^2 \sum_{L\subset I}\left\vert
b_{L}\right\vert^{2}.
\end{eqnarray*}%
Moreover, the norm of $\mathsf{P}_{b}^{\left( 0,1\right) }\circ \mathsf{P}%
_{d}^{\left( 1,0\right) }$ on $L^2\left(\mathbb{R}\right)$ satisfies 
\begin{equation*}
\left\Vert \mathsf{P}_{b}^{\left( 0,1\right) }\circ \mathsf{P}_{d}^{\left(
1,0\right) }\right\Vert _{L^{2}\left( \mathbb{R}\right) \rightarrow
L^{2}\left( \mathbb{R}\right) }\approx C_1+C_2
\end{equation*}
where $C_1$ and $C_2$ are the best constants in appearing above.
\end{theorem}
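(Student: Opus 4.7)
The plan is to bootstrap off the Gram matrix analysis already carried out for the composition $\mathsf{P}_{b}^{(0,1)}\circ \mathsf{P}_{d}^{(1,0)}$. The key observation is that the Haar Gram matrix entries $G_{I,J}=\overline{b_I}d_J |I\cap J|/(|I||J|)$ split naturally into a lower-triangular part ($J\subset I$) and an upper-triangular part ($I\subset J$). These two pieces, up to an absolute constant, are exactly the Gram matrices of the model operators $\mathsf{T}_{b,d}^{(0,1,1,0)}$ and $\mathsf{T}_{d,b}^{(0,1,1,0)}$ on $L^{2}_c(\mathcal{H})$. Hence
\[
\left\Vert \mathsf{P}_{b}^{(0,1)}\circ \mathsf{P}_{d}^{(1,0)}\right\Vert_{L^2(\mathbb{R})\to L^2(\mathbb{R})} \approx \left\Vert \mathsf{T}_{b,d}^{(0,1,1,0)}\right\Vert_{L^2_c(\mathcal{H})\to L^2_c(\mathcal{H})} + \left\Vert \mathsf{T}_{d,b}^{(0,1,1,0)}\right\Vert_{L^2_c(\mathcal{H})\to L^2_c(\mathcal{H})}.
\]
By symmetry in $(b,d)$, it therefore suffices to characterize the norm of $\mathsf{T}_{b,d}^{(0,1,1,0)}$, and then interchange $b$ and $d$ to get the second condition.

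Next, I would reduce the boundedness of $\mathsf{T}_{b,d}^{(0,1,1,0)}=\mathcal{M}_{\overline{b}}^{0}\,\mathsf{U}\,\mathcal{M}_{d}^{-1}$ on $L^{2}_c(\mathcal{H})$ to a two-weight inequality for the positive tree operator $\mathsf{U}=\sum_{K\in \mathcal{D}} \widetilde{\mathbf{1}}_{T(K)}\otimes \widetilde{\mathbf{1}}_{Q(K)}$. Writing $f$ as a sequence on tiles and unwinding the multiplication operators, this is precisely the two-weight inequality
\[
\left\Vert \mathcal{I}^{\ast}(g\omega)\right\Vert_{\ell^2(\sigma)} \le C \left\Vert g\right\Vert_{\ell^2(\omega)}
\]
for the adjoint tree integral $\mathcal{I}^{\ast}$, with weights $\omega(L)=|d_L|^2$ and $\sigma(J)=|b_J|^2/|J|^2$. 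This is the setup treated by the Arcozzi--Rochberg--Sawyer tree theorem (extended from rooted to rootless trees by monotone convergence), which reduces boundedness to testing $g=\mathbf{1}_{\{L\subset I\}}$ for every $I\in \mathcal{D}$. Writing out the truncated test produces precisely
\[
\sum_{J\subset I}\left\vert b_{J}\right\vert^{2}\frac{1}{\left\vert J\right\vert^{2}}\left( \sum_{L\subset J}\left\vert d_{L}\right\vert^{2}\right)^{2} \le C_1^2 \sum_{L\subset I}\left\vert d_{L}\right\vert^{2}.
\]

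The last step is to identify this tree testing inequality with the operator-theoretic testing condition in the theorem statement. For this I would use the unitary $V:L^2(\mathbb{R})\to L^2_c(\mathcal{H})$ sending $h_I\mapsto \widetilde{\mathbf{1}}_{T(I)}$, under which $\mathsf{P}_{b}^{(0,1)}\mathsf{P}_{d}^{(1,0)}$ corresponds (on the lower triangle) to $\mathcal{M}_{\overline{b}}^{0}\mathsf{U}\mathcal{M}_{d}^{-1}$. Expanding $\mathsf{Q}_I\mathsf{P}_{b}^{(0,1)}\mathsf{P}_{d}^{(1,0)}\mathsf{Q}_I\overline{d}$ through $V$ turns it into $\mathbf{1}_{Q(I)}\mathsf{U}(\sigma \mathbf{1}_{Q(I)})$ measured in $L^2(\mathcal{H};w)$, and a direct computation (already displayed in the Type $(0,1,1,0)$ discussion) shows this equals $\tfrac{1}{2}\sum_{J\subset I}|b_J|^2 |J|^{-2}(\sum_{L\subset J}|d_L|^2)^2$; similarly $\|\mathsf{Q}_I d\|^2_{L^2(\mathbb{R})}=\sum_{L\subset I}|d_L|^2$. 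Thus the tree testing condition is exactly the first testing inequality in the theorem. The same argument with $b$ and $d$ interchanged yields the second inequality and controls $\|\mathsf{T}_{d,b}^{(0,1,1,0)}\|$. Combining, the sum of the two testing constants is comparable to $\|\mathsf{P}_{b}^{(0,1)}\circ \mathsf{P}_{d}^{(1,0)}\|$.

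The main obstacle is the reduction of the off-diagonal Gram matrix to two positive model operators: one has to verify that the absence of the diagonal ($J=I$ contributes nothing since $G_{I,I}=\overline{b_I}d_I/|I|$ fits either piece) does not cause losses, and that splitting into lower and upper triangles is bounded by the full operator up to constants. Necessity of the testing conditions in the reverse direction follows by testing on $\mathsf{Q}_I\overline{d}$ (respectively $\mathsf{Q}_I\overline{b}$), which is straightforward given the unitary identification; sufficiency is where the ARS tree theorem does the real work.
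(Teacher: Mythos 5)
Your proposal is correct and follows essentially the same route as the paper: the Gram matrix computation and its splitting into the two transplanted model operators $\mathsf{T}_{b,d}^{(0,1,1,0)}$ and $\mathsf{T}_{d,b}^{(0,1,1,0)}$ on $L^2_c(\mathcal{H})$, the reduction to the two-weight inequality for the adjoint tree integral $\mathcal{I}^{\ast}$ with weights $\omega=|d|^2$ and $\sigma=|b|^2/|J|^2$ handled by the Arcozzi--Rochberg--Sawyer theorem, and the translation back to operator testing conditions via the unitary $V$ are all exactly the paper's argument. The technical points you flag (the diagonal term and the necessity of the testing conditions directly from the full operator norm rather than from the triangular pieces) are resolved in the paper just as you anticipate.
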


\subsection{Type (0,\,1,\,0,\,0) Compositions}

\label{0100}

The Gram matrix $\mathfrak{G}_{\mathsf{P}_{b}^{\left( 0,1\right) }\circ 
\mathsf{P}_{d}^{\left( 0,0\right) }}=\left[ G_{I,J}\right] _{I,J\in \mathcal{%
D}}$ of the operator $\mathsf{P}_{b}^{\left( 0,1\right) }\circ \mathsf{P}%
_{d}^{\left( 0,0\right) }$ relative to the Haar basis $\left\{ h_{I}\right\}
_{I\in \mathcal{D}}$ has entries given by%
\begin{eqnarray*}
G_{I,J} &=&\left\langle \mathsf{P}_{b}^{\left( 0,1\right) }\circ \mathsf{P}%
_{d}^{\left( 0,0\right) }h_{J},h_{I}\right\rangle_{L^2(\mathbb{R})}
=\left\langle \mathsf{P}_{d}^{\left( 0,0\right) }h_{J},\mathsf{P}%
_{b}^{\left( 1,0\right) }h_{I}\right\rangle_{L^2(\mathbb{R})} \\
&=&\left\langle d_{J}h_{J},b_{I}h_{I}^1\right\rangle_{L^2(\mathbb{R})} \\
&=&\overline{b_{I}}d_{J}\widehat{h_{I}^1}\left( J\right) =\left\{ 
\begin{array}{ccc}
\overline{b_{I}}d_{J}\frac{-1}{\sqrt{\left\vert J\right\vert }} & \text{ if }
& I\subset J_{-} \\ 
\overline{b_{I}}d_{J}\frac{1}{\sqrt{\left\vert J\right\vert }} & \text{ if }
& I\subset J_{+} \\ 
0 & \text{ if } & J\subset I\text{ or }I\cap J=\emptyset.%
\end{array}%
\right.
\end{eqnarray*}

Now consider the operator $\mathsf{T}_{b,d}^{\left( 0,1,0,0\right) }$
defined by%
\begin{equation*}
\mathsf{T}_{b,d}^{\left( 0,1,0,0\right) }\equiv \mathcal{M}_{\overline{b}%
}^{-1}\left( \sum_{K\in \mathcal{D}}\widetilde{\mathbf{1}}_{Q_{\pm }\left(
K\right) }\otimes \widetilde{\mathbf{1}}_{T\left( K\right) }\right) \mathcal{%
M}_{d}^{\frac{1}{2}},
\end{equation*}
where 
\begin{equation}  \label{Qpm_Haar}
\mathbf{1}_{Q_{\pm }\left( K\right) }\equiv -\sum_{L\subset K_{-}}\mathbf{1}%
_{T\left( L\right) }+\sum_{L\subset K_{+}}\mathbf{1}_{T\left( L\right) }.
\end{equation}
A straightforward computation shows that 
\begin{eqnarray*}
\left\Vert \mathbf{1}_{Q_{\pm}\left (K\right)}\right\Vert_{L^2\left(\mathcal{%
H}\right)} & = & \frac{\left\vert K\right\vert}{2}; \\
\mathcal{M}_{a}^{\lambda}\mathbf{1}_{Q_{\pm}\left( K\right)} & = &
-\sum_{L\subset K_{-}}a_{L}\left\vert L\right\vert^{\lambda}\mathbf{1}%
_{T\left( L\right) }+\sum_{L\subset K_{+}}a_{L}\left\vert
L\right\vert^{\lambda}\mathbf{1}_{T\left( L\right) }.
\end{eqnarray*}

The Gram matrix $\mathfrak{G}_{\mathsf{T}_{b,d}^{\left( 0,1,0,0\right) }}=%
\left[ G_{I,J}\right] _{I,J\in \mathcal{D}}$ of $\mathsf{T}_{b,d}^{\left(
0,1,0,0\right) }$ relative to the basis $\left\{ \widetilde{\mathbf{1}}%
_{T\left( I\right) }\right\} _{I\in \mathcal{D}}$ then has entries given by%
\begin{eqnarray*}
G_{I,J} &=&\left\langle \mathsf{T}_{b,d}^{\left( 0,1,0,0\right) }\widetilde{%
\mathbf{1}}_{T\left( J\right) },\widetilde{\mathbf{1}}_{T\left( I\right)
}\right\rangle_{L^2\left(\mathcal{H}\right)} \\
&=&\left\langle \mathcal{M}_{\overline{b}}^{-1}\left( \sum_{K\in \mathcal{D}}%
\widetilde{\mathbf{1}}_{Q_{\pm }\left( K\right) }\otimes \widetilde{\mathbf{1%
}}_{T\left( K\right) }\right) \mathcal{M}_{d}^{\frac{1}{2}}\widetilde{%
\mathbf{1}}_{T\left( J\right) }, \widetilde{\mathbf{1}}_{T\left( I\right)
}\right\rangle_{L^2\left(\mathcal{H}\right)} \\
&=&\sum_{K\in \mathcal{D}}\left\langle \left\langle \widetilde{\mathbf{1}}%
_{T\left( K\right) },\mathcal{M}_{d}^{\frac{1}{2}}\widetilde{\mathbf{1}}%
_{T\left( J\right) }\right\rangle_{L^2\left(\mathcal{H}\right)} \mathcal{M}_{%
\overline{b}}^{-1}\widetilde{\mathbf{1}}_{Q_{\pm }\left( K\right) },%
\widetilde{\mathbf{1}}_{T\left( I\right) }\right\rangle_{L^2\left(\mathcal{H}%
\right)} \\
&=&\sum_{K\in \mathcal{D}}d_{J}\left\vert J\right\vert ^{\frac{1}{2}%
}\left\langle \widetilde{\mathbf{1}}_{T\left( K\right) },\widetilde{\mathbf{1%
}}_{T\left( J\right) }\right\rangle_{L^2\left(\mathcal{H}\right)}
\left\langle \mathcal{M}_{\overline{b}}^{-1}\widetilde{\mathbf{1}}_{Q_{\pm
}\left( K\right) },\widetilde{\mathbf{1}}_{T\left( I\right)
}\right\rangle_{L^2\left(\mathcal{H}\right)} \\
&=& d_{J}\left\vert J\right\vert ^{\frac{1}{2}}\left\langle \mathcal{M}_{%
\overline{b}}^{-1}\widetilde{\mathbf{1}}_{Q_{\pm }\left( J\right) },%
\widetilde{\mathbf{1}}_{T\left( I\right) }\right\rangle_{L^2\left(\mathcal{H}%
\right)} \\
&=& 2\sqrt{2} d_{J}\left\vert J\right\vert ^{\frac{1}{2}}\left\vert
J\right\vert^{-1} \left\vert I\right\vert^{-1}\left\langle -\sum_{L\subset
J_{-}}\overline{b_{L}}\left\vert L\right\vert^{-1} \mathbf{1}_{T\left(
L\right) }+\sum_{L\subset J_{+}}\overline{b_{L}}\left\vert L\right\vert^{-1}%
\mathbf{1}_{T\left( L\right) }, \mathbf{1}_{T\left( I\right)
}\right\rangle_{L^2\left(\mathcal{H}\right)} \\
&=& \sqrt{2}\left\{ 
\begin{array}{ccc}
-\overline{b_{I}}d_{J}\left\vert J\right\vert ^{-\frac{1}{2}} & \text{ if }
& I\subset J_{-} \\ 
\overline{b_{I}}d_{J}\left\vert J\right\vert ^{-\frac{1}{2}} & \text{ if } & 
I\subset J_{+} \\ 
0 & \text{ if } & J\subset I\text{ or }I\cap J=\emptyset.%
\end{array}%
\right.
\end{eqnarray*}%
Thus, up to an absolute constant, we see that $\mathfrak{G}_{\mathsf{T}%
_{b,d}^{\left( 0,1,0,0\right) }}=\mathfrak{G}_{\mathsf{P}_{b}^{\left(
0,1\right) }\circ \mathsf{P}_{d}^{\left( 0,0\right) }}$ , and we obtain the
following conclusion 
\begin{equation*}
\left\Vert \mathsf{P}_{b}^{\left( 0,1\right) }\circ \mathsf{P}_{d}^{\left(
0,0\right) }\right\Vert _{L^{2}\left( \mathbb{R}\right) \rightarrow
L^{2}\left( \mathbb{R}\right) }\approx \left\Vert \mathsf{T}_{b,d}^{\left(
0,1,0,0\right) }\right\Vert _{L^2\left(\mathcal{H}\right)\rightarrow
L^2\left(\mathcal{H}\right)}\ .
\end{equation*}

\bigskip

Now the operator norm $\left\Vert \mathsf{T}_{b,d}^{\left( 0,1,0,0\right)
}\right\Vert _{L^2_c\left(\mathcal{H}\right)\rightarrow L^2_c\left(\mathcal{H%
}\right)}$ equals the best constant in a certain two weight inequality for
the operator $\mathsf{U}$ on $L^2\left(\mathcal{H}\right)$ defined by%
\begin{equation*}
\mathsf{U}\equiv \sum_{K\in \mathcal{D}}\widetilde{\mathbf{1}}_{Q_{\pm
}\left( K\right) }\otimes \widetilde{\mathbf{1}}_{T\left( K\right) }.
\end{equation*}%
This operator is not positive, but its singular character is well-behaved,
and the best constant in a certain two weight inequality associated to $%
\mathsf{U}$ is in turn comparable to the best constants in the associated
testing conditions. These testing conditions thus give a characterization of
the boundedness of the paraproduct composition $\mathsf{P}_{b}^{\left(
0,1\right) }\circ \mathsf{P}_{d}^{\left( 0,0\right) }$ on $L^{2}\left( 
\mathbb{R}\right) $. Here are the details which provide this reduction.

By the computations above, the inequality we wish to characterize is:%
\begin{equation}
\left\Vert \mathcal{M}_{\overline{b}}^{-1}\mathsf{U} \mathcal{M}_{d}^{\frac{1%
}{2}}f\right\Vert _{L^2_c\left(\mathcal{H}\right)} = \left\Vert \mathsf{T}%
_{b,d}^{\left( 0,1,0,0\right) }f\right\Vert _{L^2_c\left(\mathcal{H}\right)}
\lesssim \left\Vert f\right\Vert _{L^2_c\left(\mathcal{H}\right)}.
\label{to be char}
\end{equation}
Now if $f=\sum_{I\in\mathcal{D}} f_I\mathbf{1}_{T(I)}$ then $\mathcal{M}%
_{d}^{\frac{1}{2}}f=\sum_{I\in\mathcal{D}}d_{I}\sqrt{\left\vert I\right\vert 
}f_I\mathbf{1}_{T\left( I\right)}$, and if we define $g=\mathcal{M}_d^{\frac{%
1}{2}}f$ then inequality \eqref{to be char} is equivalent to:%
\begin{equation}
\left\Vert \mathsf{U}g\right\Vert _{L^2_c\left(\mathcal{H};w\right)}\lesssim
\left\Vert g\right\Vert _{L^2_c\left(\mathcal{H};\nu\right)} ,
\label{to be char-redux}
\end{equation}%
where the weights $w$ and $\nu $ are given by%
\begin{eqnarray*}
w & \equiv & \sum_{I\in\mathcal{D}}\left\vert b_{I}\right\vert^{2}\left\vert
I\right\vert ^{-2}\mathbf{1}_{T(I)} \\
\nu & \equiv & \sum_{I\in\mathcal{D}} \left\vert
d_{I}\right\vert^{-2}\left\vert I\right\vert ^{-1}\mathbf{1}_{T(I)}.
\label{def weights}
\end{eqnarray*}
This follows because a straightforward computation shows that for $%
k=\sum_{I\in\mathcal{D}}k_I\mathbf{1}_{T(I)}$ 
\begin{equation*}
\left\Vert \mathcal{M}_{\overline{b}}^{1} k\right\Vert_{L^2_c\left(\mathcal{H%
}\right)}^2=\frac{1}{2}\sum_{I\in\mathcal{D}}\left\vert b_I\right\vert^2
\left\vert k_I\right\vert^2\left\vert I\right\vert^{4}=\left\Vert
k\right\Vert_{L^2_c\left(\mathcal{H};w\right)}^2
\end{equation*}
and that for $f=\left(\mathcal{M}_d^{\frac{1}{2}}\right)^{-1}g=\sum_{I\in%
\mathcal{D}} g_I d_I^{-1} \left\vert I\right\vert^{-\frac{1}{2}}\mathbf{1}%
_{T(I)}$ one has 
\begin{equation*}
\left\Vert f\right\Vert_{L^2_c\left(\mathcal{H}\right)}^2=\frac{1}{2}%
\sum_{I\in\mathcal{D}} \left\vert g_I\right\vert^2\left\vert
d_I\right\vert^{-2}\left\vert I\right\vert=\left\Vert
g\right\Vert_{L^2_c\left(\mathcal{H};\nu\right)}^2.
\end{equation*}
Now, let 
\begin{equation*}
\sigma\equiv \sum_{I\in\mathcal{D}} \left\vert
d_{I}\right\vert^{2}\left\vert I\right\vert \mathbf{1}_{T(I)}
\end{equation*}
and substitute $g=h\sigma$ into \eqref{to be char-redux} to see that %
\eqref{to be char} is in terms of weighted $L^2$ norms equivalent to 
\begin{equation}
\left\Vert \mathsf{U}\left(\sigma h\right)\right\Vert_{L^2_c\left(\mathcal{H}%
;w\right)}\lesssim \left\Vert h\right\Vert_{L^2_c\left(\mathcal{H}%
;\sigma\right)} ,  \label{to be char-final}
\end{equation}%
By Theorem \ref{NTV_Basis} we have that the best constant in 
\eqref{to be
char-final} is equivalent to best constants in the testing conditions given
by 
\begin{eqnarray}
\left\Vert \mathsf{U}\left(\sigma \mathbf{1}_{T\left( I\right) }\right)
\right\Vert_{L^2_c\left(\mathcal{H};w\right)} &\leq & C_1 \left\Vert \mathbf{%
1}_{T\left( I\right)}\right\Vert_{L^2_c\left(\mathcal{H};\sigma\right)}
\label{NTV_forward} \\
\left\Vert \mathbf{1}_{Q\left( I\right)} \mathsf{U}^{\ast}\left(w \mathbf{1}%
_{Q\left( I\right) }\right) \right\Vert_{L^2_c\left(\mathcal{H}%
;\sigma\right)} &\leq & C_2 \left\Vert \mathbf{1}_{Q\left(I\right)}\right%
\Vert_{L^2_c\left(\mathcal{H};w\right)} .  \label{NTV_backward}
\end{eqnarray}

We now phrase these conditions in terms of paraproduct type testing
conditions. For our special choice of measures $\sigma$ and $w$ given above,
a simple computation shows that right-hand side of \eqref{NTV_forward} is 
\begin{equation*}
\left\Vert\mathbf{1}_{T\left( I\right)}\right\Vert_{L^2_c\left(\mathcal{H}%
;\sigma\right)}=\frac{\left\vert d_I\right\vert \left\vert I\right\vert^{%
\frac{3}{2}}}{\sqrt{2}}.
\end{equation*}
While the left-hand side of \eqref{NTV_forward} yields 
\begin{eqnarray*}
\left\Vert \mathsf{U}\left(\sigma \mathbf{1}_{T\left( I\right) }\right)
\right\Vert_{L^2_c\left(\mathcal{H};w\right)} & = & \sqrt{2}\left\vert
I\right\vert\left\vert d_I\right\vert^2 \left\Vert \widetilde{\mathbf{1}}%
_{Q_{\pm}\left(I\right)} \right\Vert_{L^2_c\left(\mathcal{H};w\right)} \\
& = & \left\vert I\right\vert\left\vert d_I\right\vert^2
\left(\sum_{L\subset I_{-}} \left\vert b_L\right\vert^2+\sum_{L\subset
I_{+}} \left\vert b_L\right\vert^2\right)^{\frac{1}{2}} \\
& = & \left\vert I\right\vert\left\vert
d_I\right\vert^2\left(\sum_{L\subsetneq I} \left\vert
b_L\right\vert^2\right)^{\frac{1}{2}}.
\end{eqnarray*}
Thus, for our special choice of measures, \eqref{NTV_forward} is equivalent
to 
\begin{equation*}
\sup_{I\in\mathcal{D}}\left\vert d_I\right\vert \left(\frac{1}{\left\vert
I\right\vert}\sum_{L\subsetneq I} \left\vert b_L\right\vert^2\right)^{\frac{1%
}{2}}\lesssim 1.
\end{equation*}
And, since $\left(\frac{1}{\left\vert I\right\vert}\sum_{L\subsetneq I}
\left\vert b_L\right\vert^2\right)^{\frac{1}{2}}=\left\Vert \mathsf{P}%
_{b}^{\left( 0,1\right) } h_I\right\Vert_{L^2\left(\mathbb{R}\right)}$ we
conclude that \eqref{NTV_forward} is implied by 
\begin{equation*}
\left\vert d_I\right\vert\left\Vert \mathsf{P}_{b}^{\left( 0,1\right) }
h_I\right\Vert_{L^2\left(\mathbb{R}\right)}\lesssim 1\quad\forall I\in%
\mathcal{D}.
\end{equation*}
Furthermore, it is clear that this last condition is implied by the
boundedness of the operator $\mathsf{P}_{b}^{\left( 0,1\right) }\circ 
\mathsf{P}_{d}^{\left( 0,0\right) }$ from $L^2\left(\mathbb{R}%
\right)\rightarrow L^2\left(\mathbb{R}\right)$. For if $g$ is any function,
then we have 
\begin{eqnarray*}
\left\Vert g\right\Vert_{L^2\left(\mathbb{R}\right)}\left\Vert \mathsf{P}%
_{b}^{\left( 0,1\right) }\circ \mathsf{P}_{d}^{\left( 0,0\right)
}\right\Vert_{L^2\left (\mathbb{R}\right)\rightarrow L^2\left (\mathbb{R}%
\right)} & \geq & \left\vert \left\langle \mathsf{P}_{b}^{\left( 0,1\right)
}\circ \mathsf{P}_{d}^{\left( 0,0\right) } h_I,g\right\rangle_{L^2\left (%
\mathbb{R}\right)}\right\vert \\
& = & \left\vert d_I\right\vert \left\vert \left\langle \mathsf{P}%
_{b}^{\left( 0,1\right) } h_I,g\right\rangle_{L^2\left (\mathbb{R}%
\right)}\right\vert.
\end{eqnarray*}
Choosing $g=\mathsf{P}_{b}^{\left( 0,1\right) } h_I$ yields, 
\begin{equation*}
\left\vert d_I\right\vert\left(\frac{1}{\left\vert I\right\vert}%
\sum_{L\subsetneq I} \left\vert b_L\right\vert^2\right)^{\frac{1}{2}%
}=\left\vert d_I\right\vert\left\Vert \mathsf{P}_{b}^{\left( 0,1\right) }
h_I\right\Vert_{L^2\left(\mathbb{R}\right)}\leq\left\Vert \mathsf{P}%
_{b}^{\left( 0,1\right) }\circ \mathsf{P}_{d}^{\left( 0,0\right)
}\right\Vert_{L^2\left (\mathbb{R}\right)\rightarrow L^2\left (\mathbb{R}%
\right)}.
\end{equation*}

Turning to \eqref{NTV_backward} one easily computes that the right-hand side
is given by 
\begin{equation*}
\left\Vert\mathbf{1}_{Q\left(I\right)}\right\Vert_{L^2_c\left(\mathcal{H}%
;w\right)}^2=\frac{1}{2}\sum_{J\subset I}\left\vert b_J\right\vert^2=\frac{1%
}{2}\left\Vert \mathsf{Q}_I b\right\Vert_{L^2\left(\mathbb{R}\right)}^2.
\end{equation*}
We now provide an alternate, equivalent, way to study the backward testing
condition. Let $V:L^2\left(\mathbb{R}\right)\rightarrow L^2_{c}\left(%
\mathcal{H}\right)$ be the unitary operator defined in \eqref{e.changeofbase}
. 


Next, observe that $w\mathbf{1}_{Q\left (I\right)}=\sum_{J\subset I}
\left\vert b_{J}\right\vert^{2}\left\vert J\right\vert ^{-2}\mathbf{1}%
_{T(J)} $. Then, using these unitary operators we have that

\begin{eqnarray*}
\left\Vert \mathsf{Q}_I \mathsf{P}_{d}^{(0,0)}\mathsf{P}_{b}^{(1,0)} \mathsf{%
Q}_I \overline{b} \right\Vert_{L^2(\mathbb{R})} & = & \left\Vert \mathsf{Q}%
_I V^\ast \left(T^{(0,1,0,0)}_{b,d}\right)^{\ast} V\mathsf{Q}_I \overline{b}
\right\Vert_{L^2(\mathbb{R})} \\
& = & \left\Vert \mathsf{Q}_I V^\ast \left(T^{(0,1,0,0)}_{b,d}\right)^{\ast}
\left(\sum_{J\subset I} \overline{b_J}\, \widetilde{\mathbf{1}}%
_{T(J)}\right) \right\Vert_{L^2(\mathbb{R})} \\
& = & \left\Vert\mathsf{Q}_I V^\ast \mathcal{M}_{\overline{d}}^{\frac{1}{2}}%
\mathsf{U}^{\ast}\mathcal{M}_{b}^{-1} \left(\sum_{J\subset I} \overline{b_J}%
\, \widetilde{\mathbf{1}}_{T(J)}\right) \right\Vert_{L^2(\mathbb{R})} \\
& = & \left\Vert \mathsf{Q}_I V^\ast \mathcal{M}_{\overline{d}}^{\frac{1}{2}}%
\mathsf{U}^{\ast}\left( w\mathbf{1}_{Q(I)}\right) \right\Vert_{L^2(\mathbb{R}%
)} \\
& = & \left\Vert V^\ast \mathbf{1}_{Q(I)}\mathcal{M}_{\overline{d}}^{\frac{1%
}{2}}\mathsf{U}^{\ast}\left( w\mathbf{1}_{Q(I)}\right) \right\Vert_{L^2(%
\mathbb{R})} \\
& = & \left\Vert \mathbf{1}_{Q(I)}\mathcal{M}_{\overline{d}}^{\frac{1}{2}}%
\mathsf{U}^{\ast}\left( w\mathbf{1}_{Q(I)}\right) \right\Vert_{L^2_c(%
\mathcal{H})} \\
& = & \left\Vert \mathbf{1}_{Q(I)} \mathsf{U}^{\ast}\left( w\mathbf{1}%
_{Q(I)}\right) \right\Vert_{L^2_c(\mathcal{H};\sigma)}.
\end{eqnarray*}
These computation show that backward testing is equivalent to the following 
\begin{equation*}
\left\Vert \mathsf{Q}_I \mathsf{P}_{d}^{(0,0)}\mathsf{P}_{b}^{(1,0)} \mathsf{%
Q}_I \overline{b} \right\Vert_{L^2(\mathbb{R})}\lesssim \left\Vert \mathsf{Q}%
_I \overline{b}\right\Vert_{L^2\left(\mathbb{R}\right)}.
\end{equation*}
This condition is again clearly implied by the boundedness of the operator $%
\mathsf{P}_{b}^{\left( 0,1\right) }\circ \mathsf{P}_{d}^{\left( 0,0\right)}$
on $L^2\left(\mathbb{R}\right)$. Finally, we note that 
\begin{equation*}
\left\Vert \mathbf{1}_{Q(I)} \mathsf{U}^{\ast} (w\mathbf{1}%
_{Q(I)})\right\Vert_{L^2_c(\mathcal{H};\sigma)}^2=\sum_{J\subset I}\frac{%
\left\vert d_J\right\vert^2}{\left\vert J\right\vert}\left(\sum_{K\subset
J_+} \left\vert b_K\right\vert^2-\sum_{K\subset J_-} \left\vert
b_K\right\vert^2\right)^2.
\end{equation*}
To see this, note that 
\begin{eqnarray*}
\left\Vert \mathbf{1}_{Q(I)}\mathsf{U}^{\ast} (w\mathbf{1}%
_{Q(I)})\right\Vert_{L^2_c(\mathcal{H};\sigma)}^2 & = & 4\sum_{J\subset I} 
\frac{\left\vert d_J\right\vert^2}{\left\vert J\right\vert}
\left\vert\left\langle w \mathbf{1}_{Q(I)}, \mathbf{1}_{Q_{\pm}(J)}\right%
\rangle_{L^2(\mathcal{H})}\right\vert^2.
\end{eqnarray*}
But, observe that we have $\mathbf{1}_{Q_{\pm}(J)}=-\mathbf{1}_{Q(J_-)}+%
\mathbf{1}_{Q(J_+)}$, and that if $L\subset K$ 
\begin{equation*}
\mathbf{1}_{Q(L)}\mathbf{1}_{Q(K)}=\mathbf{1}_{Q(L)}.
\end{equation*}
Using these observations, we find that 
\begin{eqnarray*}
4\sum_{J\subset I} \frac{\left\vert d_J\right\vert^2}{\left\vert J\right\vert%
} \left\vert\left\langle w \mathbf{1}_{Q(I)}, \mathbf{1}_{Q_{\pm}(J)}\right%
\rangle_{L^2(\mathcal{H})}\right\vert^2 & = & 4\sum_{J\subset I} \frac{%
\left\vert d_J\right\vert^2}{\left\vert J\right\vert} \left\vert\left\langle
w, \mathbf{1}_{Q(J_+)}-\mathbf{1}_{Q(J_-)}\right\rangle_{L^2(\mathcal{H}%
)}\right\vert^2 \\
& = & \sum_{J\subset I}\frac{\left\vert d_J\right\vert^2}{\left\vert
J\right\vert}\left(\sum_{K\subset J_+} \left\vert
b_K\right\vert^2-\sum_{K\subset J_-} \left\vert b_K\right\vert^2\right)^2.
\end{eqnarray*}

Therefore, we have the following theorem providing the boundedness in terms
of testing conditions on the paraproduct $\mathsf{P}_{b}^{\left( 0,1\right)
}\circ \mathsf{P}_{d}^{\left( 0,0\right) }$. This is just a restatement of
Theorem \ref{Theorem_SIO}.

\begin{theorem}
The composition $\mathsf{P}_{b}^{\left( 0,1\right) }\circ \mathsf{P}%
_{d}^{\left( 0,0\right) }$ is bounded on $L^{2}\left( \mathbb{R}\right) $ if
and only if both%
\begin{eqnarray*}
\left\vert d_I\right\vert\left\Vert \mathsf{P}_{b}^{\left( 0,1\right) }
h_I\right\Vert_{L^2\left(\mathbb{R}\right)} & \leq & C_1; \\
\left\Vert \mathsf{Q}_I \mathsf{P}_{d}^{\left( 0,0\right) } \mathsf{P}%
_{b}^{\left( 1,0\right) }\mathsf{Q}_{I}\overline{b}\right\Vert _{L^{2}\left(%
\mathbb{R}\right)} & \leq & C_2 \left\Vert \mathsf{Q}_{I} b\right\Vert
_{L^{2}\left(\mathbb{R}\right)} \\
\end{eqnarray*}
for all $I\in\mathcal{D}$.; i.e. for all $I\in\mathcal{D}$ the following
inequalities are true 
\begin{eqnarray*}
\left\vert d_I\right\vert\left(\frac{1}{\left\vert I\right\vert}%
\sum_{L\subsetneq I} \left\vert b_L\right\vert^2\right)^{\frac{1}{2}} & \leq
& C_1; \\
\left(\sum_{J\subset I}\frac{\left\vert d_J\right\vert^2}{\left\vert
J\right\vert}\left(\sum_{K\subset J_+} \left\vert
b_K\right\vert^2-\sum_{K\subset J_-} \left\vert
b_K\right\vert^2\right)^2\right)^{\frac{1}{2}} & \leq & C_2
\left(\sum_{L\subset I}\left\vert b_{L}\right\vert^{2}\right)^{\frac{1}{2}}.
\end{eqnarray*}
Moreover, the norm of $\mathsf{P}_{b}^{\left( 0,1\right) }\circ \mathsf{P}%
_{d}^{\left( 1,0\right) }$ on $L^2\left(\mathbb{R}\right)$ satisfies 
\begin{equation*}
\left\Vert \mathsf{P}_{b}^{\left( 0,1\right) }\circ \mathsf{P}%
_{d}^{\left(0,0\right) }\right\Vert _{L^{2}\left( \mathbb{R}\right)
\rightarrow L^{2}\left( \mathbb{R}\right) }\approx C_1+C_2
\end{equation*}
where $C_1$ and $C_2$ are the best constants in appearing above.
\end{theorem}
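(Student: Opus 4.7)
The plan is to mimic the transplantation strategy already executed for the $(0,1,1,0)$ case, but with two essential modifications: the ``kernel'' that appears in the transplanted operator will no longer be positive (the Carleson squares $Q(K)$ get replaced by their signed counterparts $Q_{\pm}(K)$), and the two-weight characterization will have to be invoked for a singular rather than positive operator. First I would compute the Gram matrix $\mathfrak{G}_{\mathsf{P}_{b}^{(0,1)}\circ \mathsf{P}_{d}^{(0,0)}}=[G_{I,J}]$ relative to the Haar basis: since $\mathsf{P}_{d}^{(0,0)}h_J=d_J h_J$ and $\mathsf{P}_{b}^{(1,0)}h_I=b_I h_I^1$, one gets $G_{I,J}=\overline{b_I}d_J\widehat{h_I^1}(J)$, which is nonzero precisely when $I\subsetneq J$, with sign $\pm 1/\sqrt{|J|}$ depending on whether $I\subset J_-$ or $I\subset J_+$.

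Next I would define the transplanted operator
\[
\mathsf{T}_{b,d}^{(0,1,0,0)}\equiv \mathcal{M}_{\overline{b}}^{-1}\Bigl(\sum_{K\in\mathcal{D}}\widetilde{\mathbf{1}}_{Q_{\pm}(K)}\otimes \widetilde{\mathbf{1}}_{T(K)}\Bigr)\mathcal{M}_{d}^{1/2},
\]
and verify by a direct computation, using the definition \eqref{Qpm_Haar} of $\mathbf{1}_{Q_{\pm}(K)}$, that its Gram matrix relative to $\{\widetilde{\mathbf{1}}_{T(I)}\}$ matches $\mathfrak{G}_{\mathsf{P}_{b}^{(0,1)}\circ \mathsf{P}_{d}^{(0,0)}}$ up to an absolute constant. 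Hence
\[
\left\Vert \mathsf{P}_{b}^{(0,1)}\circ \mathsf{P}_{d}^{(0,0)}\right\Vert_{L^2(\mathbb{R})\to L^2(\mathbb{R})}\approx \left\Vert \mathsf{T}_{b,d}^{(0,1,0,0)}\right\Vert_{L^2_c(\mathcal{H})\to L^2_c(\mathcal{H})}.
\]
Substituting $g=\mathcal{M}_d^{1/2}f$ and then $g=\sigma h$ for the appropriate weights converts the boundedness of $\mathsf{T}_{b,d}^{(0,1,0,0)}$ into a two-weight inequality $\|\mathsf{U}(\sigma h)\|_{L^2_c(\mathcal{H};w)}\lesssim \|h\|_{L^2_c(\mathcal{H};\sigma)}$ for the signed operator $\mathsf{U}\equiv \sum_K \widetilde{\mathbf{1}}_{Q_{\pm}(K)}\otimes \widetilde{\mathbf{1}}_{T(K)}$, where $w=\sum_I |b_I|^2|I|^{-2}\mathbf{1}_{T(I)}$ and $\sigma=\sum_I |d_I|^2 |I|\mathbf{1}_{T(I)}$.

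The heart of the proof is then to apply the NTV-type two-weight theorem (referenced as Theorem \ref{NTV_Basis} in the paper) to $\mathsf{U}$, which produces the two testing conditions
\[
\|\mathsf{U}(\sigma \mathbf{1}_{T(I)})\|_{L^2_c(\mathcal{H};w)}\leq C_1\|\mathbf{1}_{T(I)}\|_{L^2_c(\mathcal{H};\sigma)},\qquad \|\mathbf{1}_{Q(I)}\mathsf{U}^\ast(w\mathbf{1}_{Q(I)})\|_{L^2_c(\mathcal{H};\sigma)}\leq C_2 \|\mathbf{1}_{Q(I)}\|_{L^2_c(\mathcal{H};w)}.
\]
A direct computation expands the forward test: the left-hand side equals $|I||d_I|^2(\sum_{L\subsetneq I}|b_L|^2)^{1/2}$ and the right-hand side is $|d_I||I|^{3/2}/\sqrt{2}$, which is exactly the condition $|d_I|\|\mathsf{P}_b^{(0,1)}h_I\|_{L^2(\mathbb{R})}\leq C_1$. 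For the backward test, I would introduce the unitary $V:L^2(\mathbb{R})\to L^2_c(\mathcal{H})$ defined by $Vh_I=\widetilde{\mathbf{1}}_{T(I)}$ and, in parallel with the computation in Section \ref{0110-type}, identify $\mathbf{1}_{Q(I)}\mathsf{U}^\ast(w\mathbf{1}_{Q(I)})$ with the paraproduct expression $\mathsf{Q}_I\mathsf{P}_d^{(0,0)}\mathsf{P}_b^{(1,0)}\mathsf{Q}_I\overline{b}$; expanding $\mathbf{1}_{Q_\pm(J)}=-\mathbf{1}_{Q(J_-)}+\mathbf{1}_{Q(J_+)}$ and computing inner products against $w\mathbf{1}_{Q(I)}$ gives the concrete sum $\sum_{J\subset I}|d_J|^2|J|^{-1}\bigl(\sum_{K\subset J_+}|b_K|^2-\sum_{K\subset J_-}|b_K|^2\bigr)^2$ on the left, with $\sum_{L\subset I}|b_L|^2$ on the right.

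The final step is to show that each testing inequality is also necessary, i.e.\ is controlled by the composition norm itself. For the forward test this follows by choosing $f=h_I$ and testing $\mathsf{P}_b^{(0,1)}\circ \mathsf{P}_d^{(0,0)}$ against $g=\mathsf{P}_b^{(0,1)}h_I$, exactly as in the argument after \eqref{par test}; for the backward test one uses the same unitary transplantation argument together with duality, mirroring the reasoning following \eqref{par test} in Section \ref{0110-type}. The main obstacle is the backward testing condition: unlike the $(0,1,1,0)$ case where the kernel is positive and the Arcozzi--Rochberg--Sawyer tree theorem applies, here the cancellation built into $Q_\pm$ forces us to rely on the genuinely singular NTV-type machinery and to track signs carefully when unfolding $\mathsf{U}^\ast(w\mathbf{1}_{Q(I)})$ into a paraproduct-style expression.
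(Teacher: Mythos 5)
Your proposal follows essentially the same route as the paper's own proof in Section \ref{0100}: the same Gram matrix computation, the same transplanted operator $\mathsf{T}_{b,d}^{(0,1,0,0)}$ and weights $w$ and $\sigma$, the same reduction to the two-weight inequality for the signed operator $\mathsf{U}$ handled by Theorem \ref{NTV_Basis}, and the same explicit unwinding of the forward and backward testing conditions (including the necessity arguments via testing on $h_I$ and via the unitary $V$). The outline is correct and matches the paper's argument.
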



\subsubsection{A Discrete T1 Theorem with Different Bases}

We will prove the following Theorem by adapting the proof strategy from
Nazarov, Treil and Volberg in \cite{NTV}. Recall that for $K\in\mathcal{D}$
that we have defined 
\begin{equation*}
\mathbf{1}_{Q_{\pm }\left( K\right) }\equiv -\sum_{L\subset K_{-}}\mathbf{1}%
_{T\left( L\right) }+\sum_{L\subset K_{+}}\mathbf{1}_{T\left( L\right) }.
\end{equation*}

\begin{theorem}
\label{NTV_Basis} Let 
\begin{equation*}
\mathsf{U}\equiv \sum_{K\in \mathcal{D}}\widetilde{\mathbf{1}}_{Q_{\pm
}\left( K\right) }\otimes \widetilde{\mathbf{1}}_{T\left( K\right) }
\end{equation*}
and suppose that $\mu $ and $\nu $ are positive measures on $\mathcal{H}$
that are constant on tiles, i.e., 
\begin{eqnarray*}
\mu & \equiv & \sum_{I\in\mathcal{D}}\mu_I\mathbf{1}_{T(I)} \\
\nu & \equiv & \sum_{I\in\mathcal{D}}\nu_I\mathbf{1}_{T(I)}.
\end{eqnarray*}
Then 
\begin{equation*}
\mathsf{U}\left( \mu \cdot \right):L^{2}_c\left(\mathcal{H}; \mu \right)
\rightarrow L^{2}_c\left(\mathcal{H};\nu \right)
\end{equation*}
if and only if both 
\begin{eqnarray*}
\left\Vert \mathsf{U}\left( \mu \mathbf{1}_{T\left( I\right) }\right)
\right\Vert _{L^{2}_c\left(\mathcal{H}; \nu \right) } &\leq & C_1\left\Vert 
\mathbf{1}_{T\left( I\right) }\right\Vert _{L^{2}_c\left(\mathcal{H}; \mu
\right) }=\sqrt{\mu \left( T\left( I\right) \right) }, \\
\left\Vert \mathbf{1}_{Q\left( I\right) } \mathsf{U}^{\ast }\left( \nu 
\mathbf{1}_{Q\left( I\right) }\right) \right\Vert _{L^{2}_c\left(\mathcal{H}%
; \mu \right) } &\leq & C_2\left\Vert \mathbf{1}_{Q\left( I\right)
}\right\Vert _{L^{2}_c\left(\mathcal{H}; \nu \right) }=\sqrt{\nu \left(
Q\left( I\right) \right) },
\end{eqnarray*}
hold for all $I\in \mathcal{D}$. Moreover, we have that 
\begin{equation*}
\left\Vert \mathsf{U}\right\Vert_{L^{2}_c\left(\mathcal{H}; \mu \right)
\rightarrow L^{2}_c\left(\mathcal{H};\nu \right)}\approx C_1+C_2
\end{equation*}
where $C_1$ and $C_2$ are the best constants appearing above.
\end{theorem}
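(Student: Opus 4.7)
The plan is to follow the two-weight T1 methodology of Nazarov--Treil--Volberg \cite{NTV}, adapted to the discrete tree setting of tiles on $\mathcal{H}$. Necessity will be a direct testing calculation: the forward condition comes from applying boundedness to $\mathbf{1}_{T(I)} \in L^2_c(\mathcal{H};\mu)$, whose norm is $\sqrt{\mu(T(I))}$; the backward condition comes dually from the boundedness of $\mathsf{U}^{\ast}(\nu\,\cdot)$ tested against $\mathbf{1}_{Q(I)}$ and then localized by multiplication with $\mathbf{1}_{Q(I)}$ to land in $L^2_c(\mathcal{H};\mu)$.

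For sufficiency, assuming $C_1,C_2<\infty$, I would expand $f\in L^2_c(\mathcal{H};\mu)$ and $g\in L^2_c(\mathcal{H};\nu)$ in weighted Haar bases $\{\phi_K^{\mu}\}$ and $\{\phi_K^{\nu}\}$ adapted to the tree filtration by Carleson squares: each $\phi_K^{\mu}$ is supported on $Q(K_-)\cup Q(K_+)$, constant on each child square, has $\int \phi_K^{\mu}\,d\mu=0$, and is $L^2(\mu)$-normalized (and analogously for $\nu$). Expanding the bilinear form,
\begin{equation*}
\langle \mathsf{U}(\mu f),\,g\nu\rangle \;=\; \sum_{K,J\in\mathcal{D}} \hat{f}^{\mu}(K)\,\overline{\hat{g}^{\nu}(J)}\,T_{K,J},\qquad T_{K,J}\equiv\langle \mathsf{U}(\mu\phi_K^{\mu}),\,\phi_J^{\nu}\nu\rangle.
\end{equation*}
The first crucial observation is that $\mathsf{U}(\mu\phi_K^{\mu})$ is supported in $Q(K_-)\cup Q(K_+)\subset Q(K)$, so $T_{K,J}=0$ unless $Q(K)$ and $Q(J)$ are nested; this splits the sum into a diagonal piece ($K=J$), a ``lower'' piece ($J\subsetneq K$), and an ``upper'' piece ($K\subsetneq J$).

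The diagonal piece is controlled by the Muckenhoupt-type bound $\mu(T(K))\,\nu(Q(K))/|Q(K)|^2\lesssim C_1C_2$, obtained by pairing the two testing hypotheses. For the upper piece $K\subsetneq J$, I exploit that $\phi_J^{\nu}$ is constant on each $Q(J_{\pm})$: regrouping reduces the pairing to expressions of the form $\langle \mathsf{U}(\mu\phi_K^{\mu}),\,\nu\mathbf{1}_{Q(J_{\pm})}\rangle$, which by duality and the mean-zero property of $\phi_K^{\mu}$ telescope into pieces dominated by $\|\mathbf{1}_{Q(J)}\mathsf{U}^{\ast}(\nu\mathbf{1}_{Q(J)})\|_{L^2(\mu)}\le C_2\sqrt{\nu(Q(J))}$; a Carleson embedding then absorbs the $\nu$-Haar coefficients of $g$ against $\|g\|_{L^2(\nu)}$. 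The lower piece $J\subsetneq K$ is handled symmetrically via the forward testing constant $C_1$. Adding the three estimates yields $\|\mathsf{U}\|_{L^2(\mu)\to L^2(\nu)}\lesssim C_1+C_2$.

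The main obstacle will be executing the telescoping/paraproduct decomposition cleanly in the off-diagonal pieces. Because $\mathsf{U}$ is signed, no positivity-based embedding (as in \cite{ArRoSa}) is available; instead one must simultaneously leverage the cancellation of the weighted Haar functions \emph{and} the well-localized tree structure of $\mathsf{U}$---in particular that $\mathbf{1}_{Q_{\pm}(K)}$ is itself a Haar-like function on the tree filtration---to reduce each off-diagonal contribution to a quantity absorbed by one of the testing conditions through a Carleson embedding. This is exactly the point at which the NTV argument is indispensable, and where the adaptation to our discrete tree must be made carefully.
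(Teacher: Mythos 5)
Your overall strategy (an NTV-style expansion in weight-adapted bases, splitting the bilinear form by nestedness, and controlling the pieces by testing conditions plus a Carleson embedding) is the same as the paper's, and your necessity argument, your support/nestedness observation, and your treatment of the upper piece $K\subsetneq J$ via the backward testing constant $C_2$ all match what the paper does. However, there is a concrete gap in your handling of the lower piece $J\subsetneq K$, and it traces back to your choice of basis on the domain side. You expand $f$ in a $\mu$-Haar basis $\{\phi_K^{\mu}\}$ adapted to Carleson squares. With that choice the lower-triangular entries $T_{K,J}=\langle \mathsf{U}(\mu\phi_K^{\mu}),\phi_J^{\nu}\nu\rangle$ with $J\subsetneq K$ do \emph{not} vanish: writing $\mathsf{U}(\mu\phi_K^{\mu})=\sum_{L\subsetneq K}\langle \mu\phi_K^{\mu},\widetilde{\mathbf{1}}_{T(L)}\rangle\,\widetilde{\mathbf{1}}_{Q_{\pm}(L)}$, only the terms with $L\supsetneq J$ are killed by the $\nu$-mean-zero property of $\phi_J^{\nu}$ (since $\mathbf{1}_{Q_{\pm}(L)}$ is then constant on $Q(J)$), while the terms with $L\subseteq J$ survive. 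Controlling this block ``symmetrically via $C_1$'' would require the forward testing condition over Carleson squares, $\bigl\Vert \mathbf{1}_{Q(I)}\mathsf{U}(\mu\mathbf{1}_{Q(I)})\bigr\Vert_{L^2_c(\mathcal{H};\nu)}\lesssim\sqrt{\mu(Q(I))}$, whereas the hypothesis of the theorem only provides testing over single tiles $T(I)$ --- a much weaker, purely diagonal Muckenhoupt-type piece of information, because $\mathsf{U}(\mu\mathbf{1}_{T(I)})$ is just one scalar multiple of $\widetilde{\mathbf{1}}_{Q_{\pm}(I)}$. As written, your argument does not close.

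The paper's resolution is to break the symmetry between the two bases: on the domain side it uses the normalized tile indicators $h_I^{\mu}=\widetilde{\mathbf{1}}_{T(I)}/\sqrt{\mu_I}$, which form an orthonormal basis of $L^{2}_c(\mathcal{H};\mu)$ automatically since tiles are disjoint, and keeps a weighted Haar system $H_J^{\nu}$ only on the target side. Then $\mathsf{U}_{\mu}h_I^{\mu}=\sqrt{\mu_I}\,\widetilde{\mathbf{1}}_{Q_{\pm}(I)}$ exactly, and the matrix entry $\langle \widetilde{\mathbf{1}}_{Q_{\pm}(I)},H_J^{\nu}\rangle_{\nu}$ vanishes for every $J\subsetneq I$, because $\mathbf{1}_{Q_{\pm}(I)}$ is constant on $Q(J)$ while $H_J^{\nu}$ has $\nu$-mean zero. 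The lower-triangular block is therefore identically zero, the diagonal is controlled by the tile testing constant $C_1$ alone, and only the upper block needs $C_2$ together with the Carleson embedding theorem. If you replace your $\mu$-Haar basis by the tile basis for $f$, the rest of your outline goes through essentially as you describe.
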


\begin{proof}
Note that 
\begin{equation*}
\mathsf{U}_{\mu}(f)=\mathsf{U}\left( f\mu \right) =\sum_{K\in \mathcal{D}%
}\left\langle f\mu ,\widetilde{\mathbf{1}}_{T\left( K\right) }\right\rangle
_{L^2(\mathcal{H}) }\widetilde{\mathbf{1}}_{Q_{\pm }\left( K\right) }.
\end{equation*}%
For notational simplicity, in this proof only, we let $\nu(J)\equiv
\nu(Q(J)) $ (i.e., we implicitly identify $J$ with $Q(J)$). Now the weight
adapted orthonormal bases are given by%
\begin{equation*}
\left\{ h_{I}^{\mu }\right\} _{I\in \mathcal{D}}\text{ and }\left\{
H_{J}^{\nu }\right\} _{J\in \mathcal{D}},
\end{equation*}%
with 
\begin{equation*}
h_{I}^{\mu }\equiv \frac{\widetilde{\mathbf{1}}_{T\left( I\right) }}{\sqrt{%
\mu _{I}}}\text{ and }H_{J}^{\nu }\equiv \widetilde{\nu }\left( J\right)
\left( -\frac{\mathbf{1}_{Q\left( J_{+}\right) }}{\nu \left( J_{+}\right) }+%
\frac{\mathbf{1}_{Q\left( J_{-}\right) }}{\nu \left( J_{-}\right) }\right) ,
\end{equation*}%
where%
\begin{equation*}
\widetilde{\nu }\left( J\right) \equiv \sqrt{\frac{\nu \left( J_{+}\right)
\nu \left( J_{-}\right) }{\nu \left( I_{+}\right) +\nu \left( I_{-}\right) }}%
.
\end{equation*}

Let $\widehat{f}_\mu$ denote the ``Haar coefficient'' of $f$ with respect to
the basis $h_I^\mu$, i.e., 
\begin{equation*}
\widehat{f}_\mu(I)\equiv\left\langle f,h_I^{\mu}\right\rangle_{L^2(\mathcal{H%
};\mu)},
\end{equation*}
and similarly for $\widehat{g}_\nu(J)$. We can now expand the function $f$
and $g$ with respect to these weighted orthonormal bases and write $%
f=\sum_{I\in\mathcal{D}}\widehat{f}_\mu\left( I\right) h_{I}^{\mu }$ and $%
g=\sum_{J\in\mathcal{D}}\widehat{g}_\nu\left( J\right) H_{\nu }^{J}$. Doing
so, we then see that 
\begin{eqnarray*}
\left\langle \mathsf{U}_{\mu }f,g\right\rangle _{L^2_c\left(\mathcal{H}%
;\nu\right) } &=&\sum_{I,J\in\mathcal{D}}\widehat{f}_\mu\left( I\right) 
\widehat{g}_{\nu}\left( J\right) \left\langle \mathsf{U}_{\mu }h_{I}^{\mu
},H_{\nu }^{J}\right\rangle _{L^2_c(\mathcal{H};\nu) } \\
&=&\sum_{I,J\in\mathcal{D}}\widehat{f}_{\mu}\left( I\right) \widehat{g}%
_{\nu}\left( J\right) \sqrt{\mu _{I}}\left\langle \widetilde{\mathbf{1}}%
_{Q_{\pm }\left( I\right) },H_{\nu }^{J}\right\rangle _{L^2_c(\mathcal{H}%
;\nu) }
\end{eqnarray*}%
since $\mathsf{U}_{\mu }h_{I}^{\mu }=\sqrt{\mu _{I}}\,\widetilde{\mathbf{1}}%
_{Q_{\pm }\left( I\right) }$. By a further, straightforward, computation we
have%
\begin{eqnarray}
\left\langle \widetilde{\mathbf{1}}_{Q_{\pm }\left( I\right) },H_{\nu
}^{J}\right\rangle _{L^2_c(\mathcal{H};\nu) } &=&\frac{1}{\left\vert
I\right\vert }\int \left( -\mathbf{1}_{Q\left( I_{+}\right) }+\mathbf{1}%
_{Q\left( I_{-}\right) }\right) \widetilde{\nu }\left( J\right) \left( -%
\frac{\mathbf{1}_{Q\left( J_{+}\right) }}{\nu \left( J_{+}\right) }+\frac{%
\mathbf{1}_{Q\left( J_{-}\right) }}{\nu \left( J_{-}\right) }\right) \nu dA 
\notag \\
&=&\left\{ 
\begin{array}{ccc}
0 & \text{ if } & J\subset I, \\ 
\frac{\pm 1}{\left\vert I\right\vert }\frac{\widetilde{\nu }\left( J\right) 
}{\nu \left( J_{\pm }\right) }\left( -\nu \left( I_{+}\right) +\nu \left(
I_{-}\right) \right) & \text{ if } & I\subset J_{\pm }, \\ 
\frac{1}{\left\vert I\right\vert }\widetilde{\nu }\left( I\right) & \text{
if } & I=J.%
\end{array}
\right.  \label{MatrixComp}
\end{eqnarray}%
Altogether we have 
\begin{eqnarray*}
\left\langle \mathsf{U}_{\mu }f,g\right\rangle _{L^2_c(\mathcal{H};\nu) }
&=&\sum_{I,J\in\mathcal{D}}\widehat{f}_\mu\left( I\right) \widehat{g}%
_\nu\left( J\right) \sqrt{\mu _{I}}\left\langle \widetilde{\mathbf{1}}%
_{Q_{\pm }\left( I\right) },H_{\nu }^{J}\right\rangle _{L^2_c(\mathcal{H}%
;\nu) } \\
&=&\left(\sum_{I=J}+\sum_{J\subset I}+\sum_{I\subset J}\right)\widehat{f}%
_\mu\left( I\right) \widehat{g}_\nu\left( J\right) \sqrt{\mu _{I}}%
\left\langle \widetilde{\mathbf{1}}_{Q_{\pm }\left( I\right) },H_{\nu
}^{J}\right\rangle _{L^2_c(\mathcal{H};\nu) } \\
& \equiv & \mathbf{A}+\mathbf{B}+\mathbf{C}.
\end{eqnarray*}%
We then need to show that 
\begin{equation*}
\left\vert \left\langle \mathsf{U}_{\mu }f,g\right\rangle _{L^2_c(\mathcal{H}%
;\nu) }\right\vert \lesssim \left(C_1+C_2\right) \left\Vert f\right\Vert
_{L^{2}_c\left( \mathcal{H};\mu \right) }\left\Vert g\right\Vert
_{L^{2}_c\left( \mathcal{H}; \nu \right) }
\end{equation*}
and to accomplish this we will show the desired estimates on each of $%
\mathbf{A}$, $\mathbf{B}$ and $\mathbf{C}$.

Now for the first term, by the third line in \eqref{MatrixComp} we have that 
\begin{eqnarray*}
\left\vert \mathbf{A}\right\vert &=& \left\vert \sum_{I\in\mathcal{D}}%
\widehat{f}_\mu\left( I\right) \widehat{g}_\nu\left( I\right) \frac{\sqrt{%
\mu _{I}}}{\left\vert I\right\vert }\widetilde{\nu }\left(
I\right)\right\vert \\
&\leq & \left\Vert f\right\Vert _{L^{2}_c\left( \mathcal{H};\mu \right)
}\left\Vert g\right\Vert _{L^{2}_c\left( \mathcal{H}; \nu \right) }\left(
\sup_{I\in\mathcal{D}}\frac{\sqrt{\mu _{I}}}{\left\vert I\right\vert }%
\widetilde{\nu }\left( I\right) \right),
\end{eqnarray*}%
with the last line following by Cauchy-Schwarz and Parseval's Identity.
However, the forward testing condition gives%
\begin{eqnarray*}
\frac{C_1^2}{2}\mu _{I}\left\vert I\right\vert^2 & = & C_1^2\left\Vert 
\mathbf{1}_{T\left( I\right)} \right\Vert _{L^{2}\left(\mathcal{H}%
;\mu\right)}^{2} \\
& \geq & \left\Vert \mathsf{U}_{\mu }\left( \mathbf{1}_{T\left( I\right)
}\right) \right\Vert _{L^{2}\left(\mathcal{H};\nu\right)}^{2}=8 \left\Vert
\mu_{I} \mathbf{1}_{Q_{\pm }\left( I\right) }\right\Vert_{L^2_c\left(%
\mathcal{H};\nu\right)}^2 \\
& = & 8 \mu_{I}^{2}\left( \nu \left( I_{+}\right) +\nu \left( I_{-}\right)
\right) ,
\end{eqnarray*}
Then, using 
\begin{equation*}
\widetilde{\nu }\left( I\right) ^{2}\equiv \frac{\nu \left( I_{+}\right) \nu
\left( I_{-}\right) }{\nu \left( I_{+}\right) +\nu \left( I_{-}\right) }\leq
\min \left\{ \nu \left( I_{+}\right) ,\nu \left( I_{-}\right) \right\} \leq
\nu \left( I_{+}\right) +\nu \left( I_{-}\right) ,
\end{equation*}%
we get%
\begin{equation*}
\sup_{I\in\mathcal{D}}\frac{\sqrt{\mu _{I}}}{\left\vert I\right\vert }%
\widetilde{\nu }\left( I\right) \lesssim C_1,
\end{equation*}
and thus, have 
\begin{equation*}
\left\vert \mathbf{A}\right\vert \lesssim C_1\left\Vert f\right\Vert
_{L^{2}_c\left( \mathcal{H};\mu \right) }\left\Vert g\right\Vert
_{L^{2}_c\left( \mathcal{H}; \nu \right) }.
\end{equation*}

The second term is trivial since $\mathbf{B}=0$ by the first line in %
\eqref{MatrixComp}. Finally, by the second line in \eqref{MatrixComp} we
have 
\begin{eqnarray*}
\left\vert \mathbf{C}\right\vert &=& \left\vert \sum_{I\in\mathcal{D}%
}\sum_{J\succ I}\widehat{f}_\mu\left( I\right) \widehat{g}_\nu\left(
J\right) \frac{\sqrt{\mu _{I}}}{\left\vert I\right\vert }\left( \frac{\pm 
\widetilde{\nu }\left( J\right) }{\nu \left( J_{\pm }\right) }\right) \left(
-\nu \left( I_{+}\right) +\nu \left( I_{-}\right) \right) \right\vert \\
&=& \left\vert \sum_{I\in\mathcal{D}}\widehat{f}_{\mu}\left( I\right) \frac{%
\sqrt{\mu _{I}}}{\left\vert I\right\vert }\left( -\nu \left( I_{+}\right)
+\nu \left( I_{-}\right) \right) \left( \sum_{J\succ I}\widehat{g}%
_{\nu}\left( J\right) \frac{\pm \widetilde{\nu }\left( J\right) }{\nu \left(
J_{\pm }\right) }\right) \right\vert \\
&=&\left\vert \sum_{I\in\mathcal{D}}\widehat{f}_\mu\left( I\right) \frac{%
\sqrt{\mu _{I}}}{\left\vert I\right\vert }\left( -\nu \left( I_{+}\right)
+\nu \left( I_{-}\right) \right) \left\langle g,\frac{\mathbf{1}_{Q\left(
I\right) }}{\nu \left( I\right) }\right\rangle _{L^2(\mathcal{H}; \nu)
}\right\vert \\
&\leq &\left( \sum_{I\in\mathcal{D}}\left\vert \widehat{f}_{\mu}\left(
I\right) \right\vert ^{2}\right) ^{\frac{1}{2}} \left( \sum_{I\in\mathcal{D}%
}\left\vert \left\langle g,\frac{\mathbf{1}_{Q\left( I\right) }}{\nu \left(
I\right) }\right\rangle _{L^2(\mathcal{H};\nu) }\right\vert ^{2}\frac{\mu
_{I}}{\left\vert I\right\vert ^{2}}\left( -\nu \left( I_{+}\right) +\nu
\left( I_{-}\right) \right) ^{2}\right) ^{\frac{1}{2}} \\
& = & \left\Vert f\right\Vert_{L^2_c(\mathcal{H})} \left( \sum_{I\in\mathcal{%
D}}\left\vert \left\langle g,\frac{\mathbf{1}_{Q\left( I\right) }}{\nu
\left( I\right) }\right\rangle _{L^2(\mathcal{H};\nu) }\right\vert ^{2}\frac{%
\mu _{I}}{\left\vert I\right\vert ^{2}}\left( -\nu \left( I_{+}\right) +\nu
\left( I_{-}\right) \right) ^{2}\right) ^{\frac{1}{2}}.
\end{eqnarray*}
Expanding $\mathsf{U}_\nu^{\ast}\left(\mathbf{1}_{Q(I)}\right)$ with respect
to the basis $\left\{\widetilde{\mathbf{1}}_{T(J)}\right\}_{J\in\mathcal{D}}$
we note that the backward testing condition gives%
\begin{eqnarray*}
C_2^2\nu \left( I\right) & = & C_2^2\left\Vert \mathbf{1}_{Q(I)}\right%
\Vert_{L^2_c(\mathcal{H};\nu)}^2 \\
& \geq & \left\Vert \mathbf{1}_{Q\left( I\right) } \mathsf{U}_{\nu }^{\ast
}\left( \mathbf{1}_{Q\left( I\right) }\right) \right\Vert _{L^{2}_c\left( 
\mathcal{H};\mu \right) }^{2} \\
& = & \sum_{J\subset I} \mu_J \left\vert \left\langle \nu \mathbf{1}_{Q(I)}, 
\widetilde{\mathbf{1}}_{Q_{\pm}(J)}\right\rangle_{L^2(\mathcal{H}%
)}\right\vert^2 \\
& = & \sum_{J\subset I}\frac{\mu _{J}}{\left\vert J\right\vert ^{2}}\left(
-\nu \left( J_{+}\right) +\nu \left( J_{-}\right) \right) ^{2},
\end{eqnarray*}
and then the Carleson Embedding Theorem shows that%
\begin{equation*}
\sum_{I\in\mathcal{D}}\left\vert \left\langle g,\frac{\mathbf{1}_{Q\left(
I\right) }}{\nu \left( I\right) }\right\rangle _{L^2_c(\mathcal{H};\nu)
}\right\vert ^{2}\frac{\mu _{I}}{\left\vert I\right\vert ^{4}}\left( -\nu
\left( I_{+}\right) +\nu \left( I_{-}\right) \right) ^{2}\lesssim
C_2^2\left\Vert g\right\Vert _{L^{2}_c\left( \mathcal{H};\nu \right) }^{2}.
\end{equation*}
Therefore, we have 
\begin{equation*}
\left\vert \mathbf{C}\right\vert\lesssim C_2\left\Vert f\right\Vert_{L^2_c(%
\mathcal{H})}\left\Vert g\right\Vert_{L^2_c(\mathcal{H})}
\end{equation*}
Combining the above we get%
\begin{equation*}
\left\vert \left\langle \mathsf{U}_{\mu }f,g\right\rangle _{L^2_c(\mathcal{H}%
;\nu) }\right\vert \leq \left\vert \mathbf{A}\right\vert +\left\vert \mathbf{%
C}\right\vert \lesssim \left(C_1+C_2\right) \left\Vert f\right\Vert
_{L^{2}\left( \mathcal{H}; \mu \right) }\left\Vert g\right\Vert
_{L^{2}\left( \mathcal{H};\nu \right) }.
\end{equation*}
\end{proof}

\begin{remark}
The paper \cite{NTV} more generally studies operators that are ``well
localized'' with respect to the Haar basis. It is clear that the method of
proof in \cite{NTV} can be extended to operators that are sufficiently
localized with respect to a pair of bases. We do not explore this extension
at this time, but will return to it at some point in the future.
\end{remark}

\section{Conclusion}

Unfortunately the methods we have used in this paper do not appear to work
to handle type $(0,1,0,1)$ compositions. However, we strongly believe that
the following conjecture is true:

\begin{conjecture}
$\mathsf{P}_{b}^{\left( 0,1\right) }\circ \mathsf{P}_{d}^{\left( 0,1\right)
} $ is bounded on $L^{2}\left( \mathbb{R}\right) $ if and only if for each $%
I\in\mathcal{D}$ there exists $L^2\left(\mathbb{R}\right)$ functions $F_I$
and $B_I$ of norm $1$ such that 
\begin{eqnarray*}
\left\Vert \mathsf{P}_{b}^{\left( 0,1\right) }\circ \mathsf{P}_{d}^{\left(
0,1\right)} F_I \right\Vert_{L^{2}\left( \mathbb{R}\right)} & \leq & C_1 \\
\left\Vert \mathsf{P}_{d}^{\left( 1,0\right) }\circ \mathsf{P}_{b}^{\left(
1,0\right)} B_I \right\Vert_{L^{2}\left( \mathbb{R}\right)} & \leq & C_2.
\end{eqnarray*}
Moreover, we will have 
\begin{equation*}
\left\Vert \mathsf{P}_{b}^{\left( 0,1\right) }\circ \mathsf{P}_{d}^{\left(
0,1\right)} \right\Vert_{L^{2}\left( \mathbb{R}\right)\rightarrow L^{2}\left(%
\mathbb{R}\right)}\approx C_1+C_2.
\end{equation*}
\end{conjecture}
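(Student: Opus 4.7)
The plan is to adapt the transplantation strategy from Section~\ref{DifficultCharacters}. First, I would compute the Gram matrix of the composition in the Haar basis. Using $\mathsf{P}_d^{(0,1)} h_J = \sum_{K \subsetneq J} d_K \widehat{h_K^1}(J)\, h_K$ and applying $\mathsf{P}_b^{(0,1)}$ once more, a direct calculation gives
\begin{equation*}
\langle \mathsf{P}_b^{(0,1)}\circ \mathsf{P}_d^{(0,1)} h_J,\, h_L\rangle_{L^2(\mathbb{R})} \;=\; b_L \sum_{L \subsetneq K \subsetneq J} d_K \,\widehat{h_K^1}(J)\,\widehat{h_L^1}(K),
\end{equation*}
which vanishes unless $L \subsetneq J$. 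Unlike the $(0,1,1,0)$ and $(0,1,0,0)$ cases handled in the paper, each entry is a nested double sum reflecting a two-level ``cascade'' rather than a single geometric ratio, which is precisely why the earlier Gram-matrix-to-transplant correspondence does not close on the nose.

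Next, I would look for a transplantation to $L^2_c(\mathcal{H})$ of the form $\mathcal{M}_{\overline{b}}^{\lambda_1}\,\mathsf{V}\,\mathcal{M}_d^{\lambda_2}$ where $\mathsf{V}$ is a \emph{two-step} averaging operator, naturally arising as a composition $\mathsf{V} = \mathsf{U}_1 \mathsf{U}_2$ of single-level operators of the type $\sum_K \widetilde{\mathbf{1}}_{T(K)}\otimes \widetilde{\mathbf{1}}_{Q(K)}$ used in Sections~\ref{0110-type} and~\ref{0100}. Boundedness on $L^2(\mathbb{R})$ would then become equivalent to a two-weight inequality for $\mathsf{V}$ with weights built from $|b_I|,|d_I|,|I|$, analogous to \eqref{to be char-final}. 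A Nazarov--Treil--Volberg type T1 theorem would then suggest equivalence with forward and backward testing against Carleson-box indicators $\mathbf{1}_{Q(I)}$, and pulling these back through the unitary $V$ of \eqref{e.changeofbase} should identify $F_I$ and $B_I$ as the normalizations of those indicators (or of $\mathsf{Q}_I \overline{d}$, $\mathsf{Q}_I \overline{b}$ in the spirit of Theorems~\ref{Theorem_Positive} and~\ref{Theorem_SIO}).

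The hard part will be proving the required T1 theorem. The operator $\mathsf{V}$ is a positive two-step average whose kernel is a ``grandchild'' relation on $\mathcal{D}$; it is neither well-localized in the sense of Theorem~\ref{NTV_Basis} nor a pure tree operator in the sense of \cite{ArRoSa}. The nested-sum structure of the Gram matrix strongly suggests that one-line Sawyer-type testing on characteristic functions alone is insufficient -- which plausibly explains why the conjecture is phrased in terms of abstract unit-norm witnesses $F_I, B_I$ rather than an explicit Sawyer-style condition on intervals. A promising route is to iterate the argument of Theorem~\ref{NTV_Basis} with two nested families of weight-adapted Haar bases, so that the ``grandchild'' matrix decouples into a diagonal piece (handled by a double application of Carleson embedding) plus two triangular pieces (each handled by the earlier characterizations applied level by level). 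Controlling the interaction of these pieces across both levels is the central obstacle, and it is closely related to why the Sarason conjecture fails for general Toeplitz products, as shown by Nazarov \cite{Naz}; any successful proof must therefore exploit dyadic structure in a way that Nazarov's continuous counterexample does not have available.
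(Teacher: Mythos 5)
There is a genuine gap here: the statement you are addressing is not a theorem of the paper but an open conjecture --- the authors explicitly state that the methods of Section \ref{DifficultCharacters} ``do not appear to work to handle type $(0,1,0,1)$ compositions'' --- and your proposal does not close that gap. What you have written is a research plan, not a proof. Your Gram matrix computation is correct, and it correctly diagnoses why the transplantation of Sections \ref{0110-type} and \ref{0100} fails on the nose (the entries involve a nested sum over intermediate intervals $L\subsetneq K\subsetneq J$ rather than a single geometric factor). But every subsequent step is conditional: the existence of a transplant $\mathcal{M}_{\overline{b}}^{\lambda_1}\mathsf{V}\mathcal{M}_d^{\lambda_2}$ whose Gram matrix matches the composition is asserted, not exhibited; the required $T1$ theorem for the two-step ``grandchild'' operator $\mathsf{V}$ is acknowledged to be unproven; and you yourself identify ``controlling the interaction of these pieces across both levels'' as the central unresolved obstacle. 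A proof attempt that names its own missing lemma is not a proof.

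There is also an internal tension in your identification of the testing functions. You first propose that pulling the box indicators $\mathbf{1}_{Q(I)}$ back through the unitary $V$ of \eqref{e.changeofbase} ``should identify $F_I$ and $B_I$,'' and then in the next paragraph argue that Sawyer-type testing on characteristic functions is likely \emph{insufficient} for this operator. These cannot both hold: if indicator testing fails, then $F_I$ and $B_I$ cannot be (images of) indicators, and the conjecture's content lies precisely in finding the right family of witnesses --- which the paper itself flags as the open issue (``The choice of the families $\{F_I\}_{I\in\mathcal{D}}$ and $\{B_I\}_{I\in\mathcal{D}}$ will clearly play an important role''). Note also that one direction of the conjecture is vacuous as stated (boundedness trivially implies the testing bounds for \emph{any} unit vectors, since $\mathsf{P}_{d}^{(1,0)}\circ\mathsf{P}_{b}^{(1,0)}$ is the adjoint of $\mathsf{P}_{b}^{(0,1)}\circ\mathsf{P}_{d}^{(0,1)}$); the entire difficulty is the converse for a specific, explicitly constructed family, and on that point your proposal offers a heuristic but no argument.
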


The choice of the families $\{F_I\}_{I\in\mathcal{D}}$ and $\{B_I\}_{I\in%
\mathcal{D}}$ will clearly play an important role.

\begin{bibdiv}
\begin{biblist}

\bib{ArRoSa}{article}{
   author={Arcozzi, Nicola},
   author={Rochberg, Richard},
   author={Sawyer, Eric},
   title={Carleson measures for analytic Besov spaces},
   journal={Rev. Mat. Iberoamericana},
   volume={18},
   date={2002},
   number={2},
   pages={443--510}
}

\bib{Naz}{article}{
   author={Nazarov, F.},
   title={A counterexample to Sarason's conjecture},
   eprint={http://www.math.msu.edu/~fedja/Preprints/Sarason.ps},
   status={preprint},
   pages={1--17},
   date={1997}
}

\bib{NTV}{article}{
   author={Nazarov, F.},
   author={Treil, S.},
   author={Volberg, A.},
   title={Two weight inequalities for individual Haar multipliers and other
   well localized operators},
   journal={Math. Res. Lett.},
   volume={15},
   date={2008},
   number={3},
   pages={583--597}
}

\bib{Petermichl}{article}{
   author={Petermichl, S.},
   title={The sharp bound for the Hilbert transform on weighted Lebesgue
   spaces in terms of the classical $A_p$ characteristic},
   journal={Amer. J. Math.},
   volume={129},
   date={2007},
   number={5},
   pages={1355--1375}
}

\bib{Petermichl2}{article}{
   author={Petermichl, Stefanie},
   title={Dyadic shifts and a logarithmic estimate for Hankel operators with
   matrix symbol},
   language={English, with English and French summaries},
   journal={C. R. Acad. Sci. Paris S\'er. I Math.},
   volume={330},
   date={2000},
   number={6},
   pages={455--460}
}

\bib{PSRW}{article}{
   author={Pott, Sandra},
   author={Sawyer, Eric T.},
   author={Reguera, Maria C.},
   author={Wick, Brett D.},
   title={The Linear Bound for the Hilbert Transform},
   status={preprint}
}

\bib{PS}{article}{
   author={Pott, Sandra},
   author={Smith, Martin P.},
   title={Paraproducts and Hankel operators of Schatten class via
   $p$-John-Nirenberg theorem},
   journal={J. Funct. Anal.},
   volume={217},
   date={2004},
   number={1},
   pages={38--78}
}

\bib{sarasonConj}{article}{
  author={Sarason, Donald},
  title={Products of Toeplitz operators},
  book={title={Linear and complex analysis. Problem book 3. Part I}, series={Lecture Notes in Mathematics}, volume={1573}, editor={Havin, V. P.}, editor={Nikolski, N. K.}, publisher={Springer-Verlag}, place={Berlin}, date={1994}, },
  pages={318-319},
}

\bib{Sawyer}{article}{
   author={Sawyer, Eric T.},
   title={A characterization of two weight norm inequalities for fractional
   and Poisson integrals},
   journal={Trans. Amer. Math. Soc.},
   volume={308},
   date={1988},
   number={2},
   pages={533--545}
}

\bib{TVZ}{article}{
   author={Treil, Sergei},
   author={Volberg, Alexander},
   author={Zheng, Dechao},
   title={Hilbert transform, Toeplitz operators and Hankel operators, and
   invariant $A_\infty$ weights},
   journal={Rev. Mat. Iberoamericana},
   volume={13},
   date={1997},
   number={2},
   pages={319--360}
}

\end{biblist}
\end{bibdiv}

\end{document}